\def\*#1{\mathbf{#1}}
\newcommand\eps{\varepsilon}
\DeclareMathOperator{\id}{id}
\DeclareMathOperator{\Vol}{Vol}
\DeclareMathOperator{\Cay}{Cay}
\newcommand\RR{\mathbb{R}}
\newcommand\ZZ{\mathbb{Z}}
\newtheorem{thm}[equation]{Theorem}
\newtheorem{cor}[equation]{Corollary}
\newtheorem{prop}[equation]{Proposition}
\newtheorem{lem}[equation]{Lemma}
\newtheorem{quest}[equation]{Question}
\newtheorem{mainthm}{Theorem}
\theoremstyle{definition}
\newtheorem{defn}[equation]{Definition}
\newtheorem{exmp}[equation]{Example}
\theoremstyle{remark}
\newtheorem{rmk}[equation]{Remark}
\numberwithin{equation}{section}
\title{Volume Entropy Rigidity for Random Groups at Low Densities
}
\author[D. (M.) Hua]{Dongming (Merrick) Hua}
\email{dongming@ucsb.edu}
\begin{document}
\begin{abstract}
    We study the rigidity of the volume entropy for weighted word metrics on hyperbolic groups, building on a recent convexity result due to Cantrell-Tanaka. Using ideas from small cancellation theory, we give conditions under which a hyperbolic group admits a unique normalized weight minimizing the entropy. Moreover, we show that these conditions are generic for random groups at small densities, and that the unique minimizer of such a generic group is arbitrarily close to the uniform weight.
\end{abstract}
\maketitle

\section{Introduction}
Given an inequality bounding some geometric quantity, we are often interested in the \textit{rigidity} of the inequality---whether there exists a \textit{unique} maximizing/minimizing object achieving equality. Perhaps the most famous example is the classical isoperimetric inequality: the disc $D^{2}$ minimizes the perimeter among all domains in the plane with the same area. A more recent example is the celebrated volume entropy inequality of G. Besson, G. Courtois, and S. Gallot \cite{BessonCourtoisGallot}: if $(M^{n}, g_{0})$, $n \geq 3$, is a closed hyperbolic manifold, then for any Riemannian metric $g$ on $M$ with $\Vol(M, g) = \Vol(M, g_{0})$ we have the inequality 
$$h(g) \geq h(g_{0}),$$ with equality if and only if $g$ is isometric to $g_{0}$. Here $h(g)$ is the \textit{volume entropy} of $(M, g)$, defined by 
$$h(g) \coloneqq \lim_{R \to \infty} \frac{\ln \Vol_{\widetilde{g}}(\widetilde{B}_{\widetilde{g}}(\widetilde{x}, R))}{R},$$ where the balls are taken in the universal cover $(\widetilde{M}, \widetilde{g})$. This limit is independent of the basepoint $\widetilde{x} \in \widetilde{M}$, as shown by A. Manning in \cite{manningbasepoint}.

The volume entropy can be defined for many classes of spaces other than Riemannian manifolds, including buildings \cite{ledrappier}, simplicial complexes \cite{babenkosabourau}, metric measure spaces \cite{rcdmaxentropy, rcdminentropy}, and finite weighted graphs \cite{lim}. Moreover, rigidity results have also been obtained in some of these settings, such as in \cite{rcdminentropy} or \cite{lim}.

In addition to the above examples, the volume entropy has also been studied in the context of groups. In the most basic case, given a group $G$ and a finite generating set $S$, one can consider the growth rate of the size of balls in the word metric with respect to $S$. Such a quantity has its roots in the work of A. Schwarz (also known as \v{S}varc) \cite{schwarz} and J. Milnor \cite{milnor} on the fundamental group of manifolds, and has since developed into an independent quantity of interest in geometric group theory; we refer the reader to \cite{delaharpe, grigorchukdelaharpe} for related problems and examples. 

More recently, there has been increased interest in groups equipped with more general metrics than just word metrics. Specifically, given a non-elementary hyperbolic group $G$, one can consider the set $\mathcal{D}(G)$ of all hyperbolic metrics which are left-invariant and quasi-isometric to some word metric; this perspective was first adopted by A. Furman in \cite{furman}, and has since been expanded on in \cite{cantrellreyes, cantrelltanaka, reyes}, just to list a few examples. The volume entropy of such metrics has been an important quantity in these papers, being related to Patterson-Sullivan measures for these metrics, serving as a normalizable quantity for comparing these metrics, and more.

This brings us to a natural question: \textit{are there volume entropy rigidity results for metrics in} $\mathcal{D}(G)$?

In our paper we consider a special case of the above question, studying the rigidity of the volume entropy of \textit{weighted marked groups} $(G, S, w)$, where $S$ is a finite, symmetric generating set for $G$ and $w \colon S \to (0, \infty)$ is a symmetric weight, satisfying $w(s) = w(s^{-1})$ for all $s \in S$. This weight $w$ induces a \textit{weighted word metric} on $G$, defined by
$$d_{w}(x, y) \coloneqq \min\left\{\sum_{i=1}^{n} w(s_{i}) \colon s_{1}, \ldots, s_{n} \in S, s_{1}\cdots s_{n} = y^{-1}x\right\}.$$ When $G$ is a non-elementary hyperbolic group, these metrics are all in $\mathcal{D}(G)$. We define the volume entropy of $w$ by
$$h(w) \coloneqq \lim_{R \to \infty} \frac{\ln |B_{w}(R)|}{R},$$ where $B_{w}(R)$ is the closed ball of radius $R$ in the metric $d_{w}$ centered at the identity. 

Our choice to restrict to these specific metrics is primarily motivated by the ability to normalize them by the sum of the weights of generators; since scaling metrics can make the entropy arbitrarily small, any study of minimizing metrics must require normalization, yet in general it is not clear which other quantities of metrics in $\mathcal{D}(G)$ we should normalize. Moreover, one can view this as a generalization of the work of S. Lim in \cite{lim}, which establishes the uniqueness of a minimizing normalized weight for finite, weighted graphs where each vertex has degree $\geq 3$.

Let us remark that A. Manning has also studied weighted word metrics, showing that the volume entropy of a manifold can be approximated by the volume entropies of weighted word metrics on the fundamental group \cite{manningsup}. However, A. Manning allows the generating set $S$ to vary whereas we do not.

To see that the entropy is well-defined, let $w_{\max} = \max_{s \in S}w(s)$ and $R, R' \geq 0$ be arbitrary. We see that any word with total weight bounded above by $R+R' + w_{\max}$ can be written as a concatenation of words with weights bounded above by $R + w_{\max}$ and $R' + w_{\max}$, respectively. Therefore, we have 
$|B_{w}(R+R'+w_{\max})| \leq |B_{w}(R+w_{\max})||B_{w}(R'+w_{\max})|$, and by the continuous version of Fekete's lemma \cite[Theorem 7.6.1]{hille} the limit 
$$\lim_{R \to \infty} \frac{\ln |B_{w}(R+w_{\max})|}{R} = \lim_{R \to \infty} \frac{\ln |B_{w}(R)|}{R}$$
exists and is finite.

We restrict $G$ to be a torsion-free, non-elementary hyperbolic group, and we assume the weights to be \textit{normalized}, meaning $\sum_{s \in S}w(s) = 1$. Both these conditions together guarantee the existence of a minimizing normalized weight. As with the previously studied settings, we are interested in conditions on $(G, S)$ which ensure the uniqueness of such a minimizing weight.  

The basis for our work is a recent convexity result for the volume entropy of more general metrics on hyperbolic groups, shown by S. Cantrell and R. Tanaka in \cite{cantrelltanaka}. To elaborate, they prove that the volume entropy is strictly convex up to \textit{rough isometry}, a natural notion of equivalence for metrics on groups. Moreover, they show a length spectrum rigidity result: two metrics are roughly isometric iff the translation lengths of all elements are equal. We refer the reader to Section \ref{sec: preliminaries} and Section \ref{sec: existenceanduniqueness} for further details and references. 

Our main contribution is the notion of a $\lambda$-\textit{translation-apparent presentation} for a group $G$. Such a presentation satisfies the classical $C'(\lambda)$ small cancellation condition, along with added conditions which guarantee that the generators are``evenly distributed" within each relator of the presentation. 

The key property of such presentations, as suggested by the name, is that the translation length of any generator $a$ with respect to any weight $w$ is equal to $w(a)$. So, we see that distinct weights give rise to distinct translation lengths, which along with the results of S. Cantrell and R. Tanaka implies the following theorem. 

\begin{mainthm}[Rigidity]\label{thm: mainthm1}
    Let $G$ be a torsion-free hyperbolic group admitting a $\lambda$-translation-apparent presentation $\langle A | \mathcal{R}\rangle$. Then $(G, S)$ admits a unique normalized weight minimizing the volume entropy, where $S = A \cup A^{-1}$.
\end{mainthm}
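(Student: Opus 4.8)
The plan is to reduce the uniqueness statement to the two results of Cantrell--Tanaka quoted above---strict convexity of the entropy up to rough isometry, and length spectrum rigidity---using the defining property of a $\lambda$-translation-apparent presentation as the bridge between them. Existence of a normalized minimizer is already guaranteed by the hypotheses, so it remains only to establish uniqueness, which I would do by contradiction.

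Suppose $w_0$ and $w_1$ are distinct normalized symmetric weights on $S$, both minimizing $h$, and write $h^\ast$ for the common minimal value. Since $w_0 \neq w_1$, there is a generator $a \in A$ with $w_0(a) \neq w_1(a)$. The key property of the presentation $\langle A \mid \mathcal{R}\rangle$ asserts that the translation length of $a$ with respect to $w_i$ equals $w_i(a)$, so the translation length of the element $a$ genuinely differs between the metrics $d_{w_0}$ and $d_{w_1}$. By the length spectrum rigidity of Cantrell--Tanaka, $d_{w_0}$ and $d_{w_1}$ are therefore not roughly isometric.

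Next I would interpolate. As the set of normalized symmetric weights is convex, $w_t \coloneqq (1-t)w_0 + t w_1$ is again such a weight for every $t \in (0,1)$, and each $d_{w_t}$ lies in $\mathcal{D}(G)$. Because the endpoints are not roughly isometric, applying the strict convexity result along this segment should give $h(w_t) < (1-t)h(w_0) + t\, h(w_1) = h^\ast$ for all $t \in (0,1)$. But $w_t$ is itself a normalized weight, so $h(w_t) \geq h^\ast$ by minimality---a contradiction. Hence no two distinct normalized minimizers can exist.

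The step requiring the most care is confirming that the Cantrell--Tanaka convexity, formulated for their parametrization of metrics in $\mathcal{D}(G)$, transfers to the affine segment $t \mapsto w_t$ of weights. Concretely, I would need to check that linear interpolation of weights is compatible with the affine (geodesic) structure under which their theorem yields strict convexity, and that the strictness is governed exactly by the rough isometry class, so that non-rough-isometry of the endpoints forces strict inequality at every interior $t$. Once this compatibility is verified, every other implication follows immediately from the quoted results and the translation-apparent property.
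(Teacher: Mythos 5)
Your proposal is correct and follows essentially the same route as the paper: reduce to uniqueness up to rough isometry via the Cantrell--Tanaka convexity of entropy along affine paths of weights, then use the fact that $\ell_{d_w}(a)=w(a)$ for translation-apparent presentations together with length spectrum rigidity to show distinct weights are never roughly isometric. The two points you defer are exactly what the paper supplies elsewhere: the compatibility of linear interpolation of weights with the convexity statement is handled by the inequality $d_{w_t}\geq(1-t)d_{w_0}+td_{w_1}$ together with antitonicity of $h$ (Lemma 3.7 and Corollary 3.8), and the identity $\ell_{d_w}(a)=w(a)$ is not the definition of translation-apparence but a consequence proved via Greendlinger's lemma and the even distribution conditions (Proposition 4.3 and Corollary 4.4).
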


The proof of this theorem is inspired by a paper of A. Shukhov, which establishes bounds on the (unweighted) growth of certain small cancellation groups \cite{shukhov}. The small cancellation condition is used to show certain words are (unweighted) geodesics, as any other representative for the same group element would contain a large portion of a relator and thus fail to be geodesic. This, however, fails in the weighted case, since a portion of a relator which is large in the unweighted sense can still have extremely small weight. The even distribution condition in the definition of a translation-apparent presentation eliminates this possibility.

One potential concern is that the existence of a translation-apparent presentation may be a hard condition to satisfy. For example, the standard presentations for common hyperbolic groups such as surface groups are not translation-apparent for any $\lambda$ (although we do prove surface groups admit unique minimizers via other methods in Section \ref{sec: existenceanduniqueness}). 

To address this, we turn to M. Gromov's density model of random groups: given a length parameter $\ell$ and a density $0 \leq d < 1$, we randomly choose $(2m-1)^{d\ell}$ cyclically reduced words of length $\ell$ on a given alphabet of size $m$, and consider the random group obtained by taking these words as relators. The \textit{genericity} of a property $P$ means that the probability a random group in this model has property $P$ approaches $1$ as $\ell \to \infty$. 

Many important group properties are known to be generic at various densities, such as hyperbolicity for $d < \frac{1}{2}$; one can view this as a justification of the well-known heuristic that ``most groups are hyperbolic." Along these lines, our second main result states that most groups at low densities admit translation-apparent presentations.

\begin{mainthm}[Genericity]\label{thm: mainthm2}
   For each $m \geq 2$ there exists $0< d_{m} < 1$ depending on $m$, such that a generic random group on $m$ letters at density $d< d_{m}$ admits a translation-apparent presentation. In fact, this presentation can be obtained by symmetrizing the natural presentation given by the random relators.
\end{mainthm}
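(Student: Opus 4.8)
The plan is to verify directly that, at sufficiently low density, the symmetrization of the random presentation satisfies the two defining conditions of a translation-apparent presentation---the metric small cancellation condition $C'(\lambda)$ and the even-distribution condition on the generators---each with probability tending to $1$ as $\ell \to \infty$. Write $r_{1}, \ldots, r_{k}$ with $k = (2m-1)^{d\ell}$ for the random relators, and let $\mathcal{R}$ denote the set of all cyclically reduced cyclic conjugates of the $r_{i}^{\pm 1}$; this is exactly the symmetrization referred to in the statement. I would define $d_{m}$ as the minimum of the two density thresholds produced below, and show that each of the two conditions fails for $\mathcal{R}$ with probability $\to 0$ when $d < d_{m}$.

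For the small cancellation condition I would invoke the standard piece-length estimate in the density model. The expected number of pieces of length $L$ is controlled by the $\leq (k\ell)^{2} = \ell^{2}(2m-1)^{2d\ell}$ choices of a pair of windows, each matching with probability $\approx (2m-1)^{-L}$, so the expected count is $\lesssim \ell^{2}(2m-1)^{2d\ell - L}$. Hence for every $\eps > 0$, with overwhelming probability no reduced word of length $\geq (2d+\eps)\ell$ appears as a subword of two distinct elements of $\mathcal{R}$ nor twice within one (once the trivial self-overlaps produced by cyclic conjugation are accounted for). Every piece then has length $< (2d+\eps)\ell$, so $\mathcal{R}$ is $C'(\lambda)$ as soon as $2d + \eps < \lambda$, giving the first threshold $d < \lambda/2$.

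For the even-distribution condition, the key point is that a uniformly random cyclically reduced word equidistributes its letters. Modeling the reduced word as the trajectory of the finite-state Markov chain on $S$ that forbids immediate backtracking, the stationary marginal of each element of $S$ is $1/|S| = 1/(2m)$ by the symmetry of the alphabet. A concentration inequality for such bounded-memory processes (Azuma--Hoeffding, or McDiarmid applied to the increments) then shows that, for fixed $\delta, \eps > 0$, the probability that some subword $u$ of length $\geq \delta\ell$ of a fixed relator contains a generator with frequency deviating from $1/(2m)$ by more than $\eps$ is at most $\ell^{2}\exp(-c(\eps)\ell)$ for some rate $c(\eps) > 0$. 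A union bound over the $k$ relators and their cyclic conjugates yields the bound $\ell^{3}(2m-1)^{d\ell}\exp(-c(\eps)\ell)$, which tends to $0$ precisely when $d\ln(2m-1) < c(\eps)$; this is the second threshold. Since cyclic permutation and inversion only permute the multiset of letters of a relator (interchanging $a$ and $a^{-1}$, which have equal weight), the estimate passes to all of $\mathcal{R}$. Because each long subword $u$ then satisfies $\sum_{s \in S}(\#s \text{ in } u)\,w(s) \approx \tfrac{|u|}{2m}\sum_{s \in S} w(s) = \tfrac{|u|}{2m}$ for every normalized weight $w$, the weighted length of a relator portion is essentially determined by its unweighted length, uniformly in $w$, which is exactly the even distribution needed.

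Taking $d_{m}$ to be the minimum of the two thresholds completes the argument. I expect the even-distribution step to be the main obstacle: one must choose the precision $\eps$ and the window scale $\delta$ fine enough to meet the exact quantitative demand of the definition (so that the weighted length of every long relator subword lies within tolerance of $|u|/(2m)$ uniformly over all admissible weights), while simultaneously keeping the rate $c(\eps)$ large enough relative to $d\ln(2m-1)$ for the union bound to survive---it is this tension that forces $d_{m}$ to be strictly positive yet possibly small, and that makes $d_{m}$ genuinely depend on $m$. The correlations introduced by the no-backtracking constraint and the bookkeeping over cyclic conjugates in the symmetrization are the technical points requiring care.
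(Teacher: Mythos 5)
Your proposal follows essentially the same route as the paper: cite the standard genericity of $C'(2d+\eps)$ small cancellation at density $d<\frac12$, model a random reduced word as the no-backtracking Markov chain on $S$ with uniform stationary distribution, apply a concentration inequality to get equidistribution of letter frequencies in all long subwords, and union-bound over subwords, cyclic conjugates, inverses, and the $(2m-1)^{d\ell}$ relators to obtain a threshold of the form $d\ln(2m-1) < c$. The only difference is a technical substitution of the concentration tool (you propose Azuma--Hoeffding/McDiarmid where the paper uses Lezaud's Chernoff bound for Markov chains together with a Cheeger-constant estimate of the spectral gap), and your approximate-equidistribution statement does imply all three sub-conditions of the even distribution requirement, so the argument is correct.
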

The essential tool in proving this result is a Chernoff bound for Markov chains, which we use to establish the even distribution condition for randomly chosen words. An explicit value of $d_{m}$ will be calculated in Section \ref{sec: randomgroups}. The reason we refer to such a setting as ``low" density is due to the fact that each $d_{m}$ is extremely small, and $d_{m} \to 0$ as $m \to \infty$.  We remark that regardless of $m$, this situation always includes the \textit{few-relator} model of random groups, where $d = 0$. Typically this consists of picking a constant number of random relators, although our results are still true if we interpret $d = 0$ to mean that we select a subexponential number (in terms of $\ell$) of random relators at each step $\ell$.

As a consequence of the previous two theorems, we conclude that a generic random group on $m$ letters at density $d < d_{m}$ has a unique normalized minimizer (with respect to the natural generating set). The natural next question is: what can we say about this unique minimizer, or the minimum value of the entropy? 

This brings us to our third result: generically, the entropy of any fixed weight is arbitrarily close to the entropy of the corresponding free group with the same weight. Moreover, the unique minimizer is arbitrarily close to being uniform (assigning an equal weight to each generator), and the minimum entropy is arbitrarily close to that of the corresponding free group.
\begin{mainthm}[Almost-Uniformity]\label{thm: mainthm3}
    Fix $\eps > 0$. A generic random group $G$ on $m$ letters at density $d < d_{m}$ satisfies $h(F_{m}, w) - \eps \leq h(G, w) \leq h(F_{m}, w)$ for \textit{any} normalized weight $w$. Moreover, $G$ admits a unique normalized minimizing weight $w_{0}$ satisfying: 
    \begin{enumerate}
        \item $\norm{w_{0}- w_{*}} < \eps$, where $w_{*}$ is the uniform normalized weight on the generators.
        \item The minimum entropy of $G$ is within $\eps$ of the minimum entropy of $F_{m}$.
    \end{enumerate}
\end{mainthm}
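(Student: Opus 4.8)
The plan is to reduce all three conclusions to a single \emph{uniform entropy comparison}: writing $f(w) = h(F_m, w)$ and $g(w) = h(G, w)$ as functions on the open simplex $\Delta$ of normalized symmetric weights, I claim that for every $\delta > 0$, a generic random group $G$ at density $d < d_m$ satisfies
\begin{equation*}
h(F_m, w) - \delta \;\le\; h(G, w) \;\le\; h(F_m, w) \qquad \text{for all normalized } w. \tag{$\star$}
\end{equation*}
By the strict convexity of Cantrell and Tanaka, both $f$ and $g$ are strictly convex on $\Delta$; by the symmetry of $F_m$ under permuting and inverting generators together with strict convexity, $f$ attains its minimum at the unique symmetric point $w_*$, with $f(w_*) = 2m\ln(2m-1)$. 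By Theorems \ref{thm: mainthm2} and \ref{thm: mainthm1}, generically $G$ carries a translation-apparent presentation on $S = A \cup A^{-1}$ and hence a unique minimizer $w_0$. Everything then follows from $(\star)$ by a convexity argument.

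To prove $(\star)$, the upper bound is immediate: the quotient map $F_m \to G$ is surjective and satisfies $d_w^G \le d_w^{F_m}$, so $|B_w^G(R)| \le |B_w^{F_m}(R)|$ and $g \le f$. The lower bound is the heart of the argument. Since reduced words are the weighted geodesics in $F_m$, the number of reduced words of $w$-length $\le R$ grows like $e^{f(w)R}$. Call a reduced word \emph{good} if it contains no subword that is more than half of a relator; by Greendlinger's lemma good words are nontrivial, and by Dehn's algorithm together with the even-distribution clause of translation-apparency they are weighted geodesics in $G$ --- here even distribution is essential, since it forces any more-than-half piece of a relator (length $\approx \ell/2$) to carry weight $\approx \ell/(4m)$ \emph{uniformly in} $w$, so each Dehn replacement that shortens unweighted length also strictly decreases weight. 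Counting the bad words, each forbidden piece $p$ has $w(p) \gtrsim \ell/(4m)$, there are at most $\mathrm{poly}(\ell)\,(2m-1)^{d\ell}$ of them, and each appears in at most $\mathrm{poly}\cdot e^{f(w)(R - w(p))}$ reduced words of budget $R$; hence the bad fraction is $\lesssim \mathrm{poly}\cdot\exp\!\big(\ell[\,d\ln(2m-1) - f(w)/(4m)\,]\big)$. Using $f(w) \ge f(w_*) = 2m\ln(2m-1)$, the bracket is at most $\ell\ln(2m-1)(d - \tfrac12) \to -\infty$ for $d < \tfrac12$, \emph{uniformly} in $w$. Thus good words exhaust a $(1-o(1))$-fraction and $g(w) \ge f(w) - \delta$ once $\ell$ is large. \textbf{The main obstacle} lies here: good words need not represent \emph{distinct} elements of $G$ (e.g.\ the two halves of a relator are good but equal), so one must separately check that the fiber over each element --- its good geodesic representatives --- is subexponential in $R$; this follows from the fellow-traveling of geodesics in small cancellation groups and leaves the exponential growth rate unchanged. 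This distinctness/fiber control, handled uniformly in $w$, is the crux of the whole proof.

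Granting $(\star)$, the displayed entropy bound of the theorem is $(\star)$ itself with $\delta = \eps$. For conclusion (2), sandwich at the two minimizers: $g(w_0) = \min g \le g(w_*) \le f(w_*) = \min f$, while $f(w_0) \le g(w_0) + \delta$ and $f(w_0) \ge f(w_*)$ give $f(w_*) - \delta \le g(w_0) \le f(w_*)$, so $|\min g - \min f| \le \delta$. In particular $0 \le f(w_0) - f(w_*) \le \delta$.

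For conclusion (1), I exploit that $w_*$ is the \emph{strict} global minimum of $f$. On the compact sublevel set $K = \{f \le f(w_*)+1\} \subset \Delta$ (compact because $f \to \infty$ at $\partial\Delta$, as one free generator direction makes balls infinite), define $\rho(\eps) = \inf\{f(w) - f(w_*) : w \in K,\ \norm{w - w_*} \ge \eps\}$, which is positive by continuity, compactness, and uniqueness of the minimizer. Since $w_0 \in K$ and $f(w_0) - f(w_*) \le \delta$, choosing $\delta < \rho(\eps)$ forces $\norm{w_0 - w_*} < \eps$. Finally, for the $\eps$ fixed in the statement, set $\delta = \tfrac12\min(\eps, \rho(\eps))$ and take $\ell$ large enough that $(\star)$ holds generically; this simultaneously yields the entropy bound, conclusion (1), and conclusion (2) with the single prescribed $\eps$.
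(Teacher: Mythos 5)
Your reduction of the three conclusions to the uniform entropy comparison $(\star)$ is sound and essentially matches the paper: the sandwich argument for (2) and the modulus-of-uniqueness function $\rho(\eps)$ for (1) are exactly what the paper does via Lemma \ref{lem: freestability} and Proposition \ref{prop: stability}. The gaps are all in your proof of $(\star)$, which is where the real work lies. First, your notion of ``good'' (no subword exceeding half a relator) is too permissive to give weighted geodesicity. After Greendlinger's lemma, the portion of the relator lying in the competing representative $y$ has unweighted length only greater than $(\tfrac12 - 3\lambda)|r|$, so its complement in $r$ can have length up to $(\tfrac12 + 3\lambda)|r|$ --- and a subword of more than half a relator can certainly carry more than half the occurrences of some generator, so the Dehn replacement need not decrease $w$-length. (You are conflating ``the forbidden piece is heavy'' with ``replacing it by its complement shortens the word''; only the latter gives geodesicity.) The paper's even-distribution clause (ii) is calibrated to subwords of length $\lceil 4\lambda|r|\rceil$, which is why Proposition \ref{prop: stronggeodesics} works with $\lambda$-reduced words (forbidding $\lceil\lambda|r|\rceil$-pieces, $\lambda=\tfrac{1}{16}$) rather than half-relator-avoiding ones; with that stronger avoidance the words are moreover \emph{unique} geodesics, which disposes of your ``fiber'' obstacle entirely. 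Note that subexponential fibers do not follow from fellow-traveling alone: a marked hyperbolic group can have exponentially many geodesic words representing a single element.

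Second, the counting step does not work as stated. The bound ``each forbidden piece appears in at most $\mathrm{poly}\cdot e^{f(w)(R-w(p))}$ reduced words of budget $R$'' carries a factor counting the possible positions of the piece, which grows with $R$; the resulting ``bad fraction'' therefore tends to infinity, not zero, as $R\to\infty$ with $\ell$ fixed (a typical very long reduced word \emph{does} contain any fixed short pattern). A naive union bound over positions cannot show that the exponential growth rate of pattern-avoiding words matches that of all reduced words. This is precisely why the paper invokes Myers' generating-function system for weighted subword avoidance and locates a real root of the perturbed polynomial $p(z)$ within $\tfrac{1}{Nl}$ of the free-group root $M_{0}$ (Propositions \ref{prop: integerreducedinequality} and \ref{prop: normalizedreducedinequality}), together with a rational-to-integer approximation step for general normalized weights. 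Some argument of this type --- generating functions, a transfer matrix, or a careful block-concatenation scheme --- is unavoidable here and is missing from your proposal.
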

As above, we take the natural generating set for these random groups. We view each weight as an element of $\mathbb{R}^{|S|}$, hence the notation $\norm{w - w_{*}}$. The first part of this theorem can be viewed as a weighted generalization of a result of Y. Ollivier, which states that a generic random group has (unweighted) volume entropy arbitrarily close to that of the free group \cite{olliviergrowth}. 

The proof of our theorem relies on ideas from the aforementioned paper of A. Shukhov \cite{shukhov}, as well as a paper by A. Myers \cite{myers}. The former motivates us to bound the entropy by calculating the growth rate of a class of words which do not contain a large portion of a generator, and the latter gives us generating functions for weighted subword avoidance which allow us to carry out the necessary calculations.

\subsection*{Outline} Let us briefly discuss the structure of the paper. In Section \ref{sec: preliminaries} we collect some of the preliminaries needed for the later results. 

In Section \ref{sec: existenceanduniqueness} we show existence of a minimizing weight for any torsion-free, non-elementary hyperbolic group $(G, S)$. Then, we use the work of S. Cantrell and R. Tanaka \cite{cantrelltanaka} to show strict convexity of the volume entropy up to rough isometry, which implies the uniqueness of the minimizer up to rough isometry as well. As an application, we show that the uniform weight is the unique minimizer for surfaces groups with their usual generating set.

In Section \ref{sec: translationapparent} we define translation-apparent presentations, and show Theorem \ref{thm: mainthm1} using techniques from small cancellation theory. Moreover, we use the counting framework developed by A. Myers in \cite{myers} to establish inequalities for the volume entropy, which lays the foundation for Theorem \ref{thm: mainthm3}.

Moving on to random groups, in Section \ref{sec: randomgroups} we apply a Chernoff bound for Markov chains to show that the conditions in Section \ref{sec: translationapparent} are generic at low densities, giving us Theorem \ref{thm: mainthm2}. Furthermore, we apply the inequalities from Theorem \ref{thm: mainthm1} to prove Theorem \ref{thm: mainthm3}.

In Section \ref{sec: stability} we discuss the notion of stability for the volume entropy of weighted word metrics---we want to know if closeness to the minimum entropy implies closeness to the unique minimizer, in the case that we have rigidity. We prove that this is indeed the case.

In Section \ref{sec: futurework} we comment on some potential directions for future work. Finally, in Appendix\ref{sec: appendix} we prove some consequences of A. Myers' work which we previously omitted for reasons of readability.

\subsection*{Acknowledgements} Portions of this work were completed while the author was at the 2024 SURF program at Caltech. The author would like to thank Antoine Song for his invaluable mentorship and advice, as well as Caltech for supporting this work during the summer. The author would also like to thank Stephen Cantrell and Tom Hutchcroft for helpful discussions, as well as Fedya Manin and Alex Wright for comments and questions which improved the content and presentation of the paper.

\section{Preliminaries}\label{sec: preliminaries}
\subsection{Notation}
Given a group $G$, a group presentation for $G$ is a set of generators $A = \{a_{1}, \ldots, a_{m}\}$ along with a set of relators $\mathcal{R} \subseteq F(A)$, such that 
$$G \cong F(A)/\langle\langle \mathcal{R} \rangle \rangle.$$ Here $F(A) \cong F_{m}$ is the free group on the alphabet $A$, and $\langle \langle \mathcal{R} \rangle \rangle$ is the normal closure of $\mathcal{R}$. We will simply write $G = \langle A | \mathcal{R} \rangle$ to denote a group presentation. In this paper we will only deal with finite presentations, where both $A$ and $\mathcal{R}$ are finite.

Given a presentation $G = \langle A | \mathcal{R} \rangle$, we will always take the symmetric generating set $S = A \cup A^{-1} = \{a_{1}, \ldots, a_{m}, a_{1}^{-1}, \ldots, a_{m}^{-1}\}$ when studying the volume entropy. When the context is clear, we will often drop the generating set $S$ or even the group $G$ from the notation $h(G, S, w)$ for the volume entropy.

Words in $A$ (or in $S$) are simply strings $s_{1} \cdots s_{n}$ of elements $s \in S$. A word is \textit{reduced} if there do not exist $s_{i}, s_{i+1}$ which are mutual inverses. Reduced words are precisely the elements of $F(A)$. We further say that a word is \textit{cyclically reduced} if it is reduced and moreover $s_{1}, s_{n}$ are not mutual inverses (so a cyclically reduced word is a word whose cyclic permutations are all reduced as well).

There is a natural evaluation map from the set of words in $A$ to elements of $G$, given by first reducing the words to obtain words in $F(A)$, and then evaluating under the quotient $F(A) \to F(A)/\langle \langle \mathcal{R} \rangle \rangle \cong G$. We denote this evaluation map by $u \mapsto \bar{u}$. We say a word $u$ represents $x \in G$ if $\overline{u} = x$. Given a weight $w$, we say that a word $u$ is a $w$-geodesic if $u$ has shortest total weight (with respect to $w$) among all words representing $x = \overline{u}$. Observe that any $w$-geodesic must be reduced. Given a weight $w$, we denote by $|u|_{w}$ the total weight of $u$ with respect to $w$, and $|u|$ the unweighted word length of $u$. Observe that $d_{w}(e, x) = |u|_{w}$, where $u$ is a $w$-geodesic representative for $x$.

\subsection{Hyperbolic Metrics and Groups}\label{sec: hyperbolicgroups}
Hyperbolic groups and hyperbolic metrics on such groups are the main setting in which our results exist. We refer the reader to \cite{drutukapovich} as a standard reference for the material in this section. 

Given a metric space $(X, d)$, we say the metric $d$ is \textit{hyperbolic}  if it satisfies the following four point condition for some $\delta > 0$: for any $x, y, z, p \in X$ we require 
$$(x|y)_{p} \geq \min\left((x|z)_{p}, (y|z)_{p}\right) - \delta,$$ where the \textit{Gromov product} $(x|y)_{p}$ is defined by $$(x|y)_{p} = \frac{1}{2}(d(x, p) + d(y, p) - d(x, y)).$$ A finitely-generated group $G$ is a \textit{hyperbolic group} if for some (equivalently any) finite, symmetric generating set $S$, the word metric with respect to $S$ is a hyperbolic metric. 

We say a hyperbolic group $G$ is \textit{non-elementary} if it is infinite and does not contain $\ZZ$ as a finite-index subgroup. Given a non-elementary hyperbolic group $G$, we denote by $\mathcal{D}(G)$ the space of all metrics on $G$ which are left-invariant, hyperbolic, and quasi-isometric to some word metric on $G$. Recall that two metric spaces $(X, d), (Y, d')$ are \textit{quasi-isometric} if there exist constants $C > 1, K > 0$, and a function $f \colon X \to Y$ (called a quasi-isometry)  such that 
$$\frac{1}{C}d(x, x') - K \leq d'(f(x), f(x')) \leq Cd(x, x')+K,$$ and for any $y \in Y$ there exists $x \in X$ such that $d'(f(x), y) \leq K$. The space $\mathcal{D}(G)$ is closed under multiplicative scaling, as well as addition (\cite[Lemma 4.1]{reyes}), and in particular is closed under taking convex combinations. Moreover, this space contains all weighted word metrics:
\begin{prop}\label{prop: weightedishyperbolic}
If $G$ is a non-elementary hyperbolic group with a finite, symmetric generating set $S$, then for any symmetric weight $w \colon S \to (0, \infty)$ the weighted word metric $d_{w}$ is in $\mathcal{D}(G)$. 
\end{prop}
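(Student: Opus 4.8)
The plan is to verify the three defining properties of $\mathcal{D}(G)$ in turn: that $d_{w}$ is left-invariant, quasi-isometric to a word metric, and hyperbolic. Left-invariance is immediate from the definition, since $d_{w}(x,y)$ depends only on the group element $y^{-1}x$, so that $d_{w}(gx, gy) = d_{w}(x,y)$ for every $g \in G$. (Symmetry and the triangle inequality, which make $d_{w}$ a genuine metric, follow from reversing and concatenating representative words, using that $w$ is symmetric.)

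For the quasi-isometry, set $w_{\min} = \min_{s \in S} w(s) > 0$ and $w_{\max} = \max_{s \in S} w(s)$, and let $d_{S}$ denote the unweighted word metric. For any representative word $s_{1} \cdots s_{n}$ of $y^{-1}x$ we have $w_{\min}\, n \le \sum_{i} w(s_{i}) \le w_{\max}\, n$; minimizing over representatives on the appropriate side yields the bi-Lipschitz bounds
$$ w_{\min}\, d_{S}(x,y) \le d_{w}(x,y) \le w_{\max}\, d_{S}(x,y). $$
Thus the identity map $(G, d_{S}) \to (G, d_{w})$ is a bi-Lipschitz bijection, in particular a quasi-isometry, so $d_{w}$ is quasi-isometric to the word metric $d_{S}$.

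The main work is hyperbolicity, and here I would take care, because the Gromov four-point condition is \emph{not} preserved by quasi-isometries of arbitrary metric spaces; the bi-Lipschitz equivalence above does not by itself transfer hyperbolicity from $d_{S}$ to $d_{w}$. Instead I would pass through geodesic spaces. Let $X_{w}$ be the metric graph obtained from $\Cay(G,S)$ by assigning each edge labelled $s$ the length $w(s)$, and let $X_{S}$ be the usual unit-length Cayley graph. Both are geodesic metric spaces, $X_{S}$ is hyperbolic because $G$ is a hyperbolic group, and the identity-on-vertices map realizes a quasi-isometry $X_{S} \to X_{w}$ (again using the $w_{\min}, w_{\max}$ bounds, now on edge lengths). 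Since hyperbolicity is a quasi-isometry invariant among geodesic spaces, $X_{w}$ satisfies the four-point condition for some $\delta$.

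Finally, the vertex set $G$ includes isometrically into $X_{w}$, since the graph distance between two vertices of $X_{w}$ is exactly $d_{w}$. Because the four-point condition is quantified over all $4$-tuples of points, it is inherited by any subset with the induced metric, with the same constant $\delta$; hence $(G, d_{w})$ is $\delta$-hyperbolic, completing the verification that $d_{w} \in \mathcal{D}(G)$. The step I expect to require the most care is precisely this last maneuver: recognizing that hyperbolicity must be transported at the level of the geodesic Cayley graphs and then restricted back to $G$, rather than extracted directly from the bi-Lipschitz map on the discrete group.
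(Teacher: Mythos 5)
Your proof is correct and follows essentially the same route as the paper: bi-Lipschitz bounds give the quasi-isometry with the word metric, and hyperbolicity is transferred through the geodesic Cayley graph with weighted edge lengths and then restricted back to the vertex set via the four-point condition. The subtlety you flag --- that hyperbolicity is only a quasi-isometry invariant among geodesic spaces --- is exactly the point the paper's argument is built around.
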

\begin{proof}
    The left-invariance of $d_{w}$ is clear from the definition. Consider the Cayley graph $\Cay(G, S)$, which has vertices corresponding to the elements of $G$, and an edge between $x, y \in G$ if and only if $x^{-1}y \in S$. We obtain two geodesic metrics $d_{1}, d_{2}$ on $\Cay(G, S)$, the first being the standard graph metric, and the second being the metric obtained by viewing the edge between $x$ and $y$ as an isometric copy of an interval of length $w(x^{-1}y)$. 
    
    Let $d_{S}$ be the standard word metric on $G$ with respect to $S$. Observe that $(G, d_{S})$ embeds isometrically into $(\Cay(G, S), d_{1})$ as the set of vertices, as does $(G, d_{w})$ in $(\Cay(G, S), d_{2})$. 
    
    Let $w_{\min} = \min_{s \in S} w(s)$, $w_{\max} = \max_{s\in S}w(s)$. We see that for any $x, y \in \Cay(G, S)$, we have 
    $$w_{\min}d_{1}(x, y) \leq d_{2}(x, y) \leq w_{\max}d_{1}(x, y),$$ and so the identity $\id \colon (\Cay(G, S), d_{1}) \to (\Cay(G, S), d_{2})$ is a quasi-isometry. Here, $(\Cay(G, S), d_{1})$ is hyperbolic because $G$ is a hyperbolic group.  Since quasi-isometries preserve hyperbolicity for geodesic metric spaces, we conclude that $(\Cay(G, S), d_{2})$ is hyperbolic. Moreover, $(G, d_{w})$ embeds isometrically in $(\Cay(G, S), d_{2})$, so the four point condition is satisfied and thus $d_{w}$ is a hyperbolic metric. The same considerations also show that $\id \colon (G, d_{S}) \to (G, d_{w})$ is a quasi-isometry, so $d_{w} \in \mathcal{D}(G)$ as claimed.
\end{proof}

\subsection{Volume Growth and Manhattan Curves.}
Let $G$ be a non-elementary hyperbolic group. Following Cantrell-Tanaka \cite{cantrelltanaka}, we define the \textit{volume growth rate} of $d \in \mathcal{D}(G)$ by
$$h(d) \coloneqq \limsup_{R \to \infty} \frac{\ln|B_{d}(R)|}{R}.$$ Observe that $h(d_{w}) = h(G, w)$ for weighted word metrics. The main tool used in \cite{cantrelltanaka} to compare the volume growth rate of two metrics $d, d' \in \mathcal{D}(G)$ is the \textit{Manhattan curve} $\mathcal{C}(d, d')$, defined by
$$\mathcal{C}(d, d') \coloneqq \partial \left\{(a, b) \in \RR \colon \sum_{x \in G}e^{-ad'(e, x) - bd(e, x)} < \infty \right\}.$$ Note that their initial definition of the Manhattan curve is different, but is equivalent to the one stated here by \cite[Proposition 3.1]{cantrelltanaka}.

Define $\theta_{d, d'}$ to be the function such that $(a, \theta_{d, d'}(a)) \in \mathcal{C}(d, d')$ for each $a$; in other words, such that $\mathcal{C}(d, d')$ is the graph of $\theta_{d, d'}$.  Then $\theta_{d, d'}(0) = h(d),$ $\theta_{d, d'}(h(d')) = 0$, and $\theta_{d, d'}$ is convex (see the paragraph following \cite[Theorem 1.1]{cantrelltanaka} and \cite[Corollary 2.10]{cantrelltanaka}). Moreover, we have:
\begin{thm}[{\cite[Theorem 1.1]{cantrelltanaka}}]\label{thm: manhattanline}
    The Manhattan curve $\mathcal{C}(d, d')$ is a straight line if and only if $d$ and $d'$ are roughly similar.
\end{thm}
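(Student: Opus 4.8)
The plan is to prove the two implications separately, with the reverse direction (rough similarity $\Rightarrow$ straight line) being an elementary estimate on Poincar\'e series, while the forward direction rests on the thermodynamic formalism attached to the symbolic coding of $G$.

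First I would dispatch the easy direction. Recall that $d$ and $d'$ being \emph{roughly similar} means there exist constants $C > 0$ and $K \geq 0$ with $|d'(e, x) - C\,d(e, x)| \leq K$ for all $x \in G$ (using left-invariance to reduce to the basepoint $e$). Then for every $a \geq 0$ one has
\begin{equation*}
e^{-aK}\sum_{x \in G} e^{-(aC + b)d(e, x)} \;\leq\; \sum_{x \in G} e^{-a d'(e, x) - b d(e, x)} \;\leq\; e^{aK}\sum_{x \in G} e^{-(aC + b)d(e, x)},
\end{equation*}
so the mixed series and the pure series for $d$ converge or diverge simultaneously. Since the critical exponent of $s \mapsto \sum_{x} e^{-s d(e, x)}$ equals the volume growth rate $h(d)$, the defining region is exactly $\{(a, b) : aC + b > h(d)\}$, and its boundary $\mathcal{C}(d, d')$ is the line $aC + b = h(d)$, of slope $-C$. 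This proves the ``if'' direction.

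For the forward direction I would pass through symbolic dynamics. As $G$ is hyperbolic, Cannon's theorem supplies a strongly Markov automatic structure, i.e.\ a finite directed graph whose admissible paths enumerate $G$ essentially bijectively; the associated subshift $\Sigma$ carries H\"older potentials $\varphi, \psi$ such that $d(e, x)$ and $d'(e, x)$ agree with the Birkhoff sums of $\varphi$ and $\psi$ up to a uniformly bounded error, a consequence of the H\"older regularity of the Gromov-product cocycle for metrics in $\mathcal{D}(G)$. Under this dictionary the curve $\mathcal{C}(d, d')$ becomes the zero locus of the topological pressure, so that $\theta_{d, d'}(a)$ is determined implicitly by $P(-a\psi - b\varphi) = 0$. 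By analyticity and strict convexity of pressure, $\theta_{d, d'}$ is real-analytic, and the standard second-derivative formula identifies $\theta_{d, d'}''$ with the asymptotic variance $\sigma^2$ of $\psi$ against the equilibrium state of the tangent potential of slope $-C$. If $\mathcal{C}(d, d')$ is a straight line, then $\theta_{d, d'}'' \equiv 0$, hence $\sigma^2 = 0$, and the variance-zero criterion of thermodynamic formalism forces $\psi - C\varphi$ to be cohomologous to a constant for the corresponding $C > 0$. Evaluating Birkhoff sums over the periodic orbits of $\Sigma$ (which correspond to conjugacy classes in $G$) then yields $\ell_{d'}(g) = C\,\ell_d(g)$ for every $g \in G$, where $\ell_d$ denotes the stable translation length. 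Finally I would invoke the length-spectrum rigidity quoted above, in the form ``translation spectra proportional $\iff$ roughly similar,'' to conclude that $d$ and $d'$ are roughly similar.

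The hard part will be the forward direction, and within it two points in particular: constructing the H\"older symbolic representation of the metrics so that Birkhoff sums recover $d(e, \cdot)$ and $d'(e, \cdot)$ up to bounded error, and then making rigorous the chain of equivalences ``$\theta_{d, d'}$ affine $\iff$ zero asymptotic variance $\iff$ $\psi - C\varphi$ cohomologous to a constant.'' One must also take care that the periodic-orbit consequence of the cohomology relation genuinely recovers the \emph{translation-length} spectrum of $G$ (rather than merely word-length data), so that the length-spectrum rigidity theorem can legitimately be applied to finish.
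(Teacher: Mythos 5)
This statement is quoted from Cantrell--Tanaka and the paper offers no proof of it, so there is no internal argument to compare yours against; I can only assess your sketch on its own terms. Your ``if'' direction is correct and complete: the comparison of Poincar\'e series under $|d'(e,x)-C\,d(e,x)|\leq K$ does show the convergence region is exactly $\{(a,b): aC+b>h(d)\}$, whose boundary is a line. The forward direction is a reasonable roadmap of the thermodynamic-formalism strategy used in this corner of the literature (Sharp for surfaces, Cantrell--Reyes and Cantrell--Tanaka for $\mathcal{D}(G)$), but as written it has two genuine gaps.

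First, a circularity: your last step invokes ``translation spectra proportional $\iff$ roughly similar.'' In this paper that equivalence is Theorem \ref{thm: translationisometric}, which is explicitly \emph{deduced from} Theorem \ref{thm: manhattanline} (together with \cite[Lemma 3.4]{cantrelltanaka}). So you cannot quote it to prove Theorem \ref{thm: manhattanline} without supplying an independent proof of marked-length-spectrum rigidity for metrics in $\mathcal{D}(G)$; alternatively, one should pass from the cohomology relation $\psi\sim C\varphi+\text{const}$ directly to rough similarity of $d$ and $d'$ (via boundedness of Birkhoff sums of the coboundary along the coding of $d(e,\cdot)$ and $d'(e,\cdot)$), bypassing translation lengths entirely. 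Second, everything that makes the forward direction hard is deferred to black boxes: the existence of a H\"older potential on a Cannon coding whose Birkhoff sums recover $d(e,\cdot)$ up to bounded error is far from automatic for a general $d\in\mathcal{D}(G)$ (such metrics are only hyperbolic and quasi-isometric to a word metric, not geodesic, and constructing the requisite ``tempered'' potentials is one of the main technical contributions of the cited works), and the chain ``curve affine $\Rightarrow$ zero asymptotic variance $\Rightarrow$ cohomologous to a constant'' needs analyticity of the pressure on the relevant component of the coding graph plus a Liv\v{s}ic-type argument, neither of which you establish. As a proposal the architecture is sound, but these are the points where the actual proof lives.
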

Here, we say that $d, d'$ are \textit{roughly similar} if there exists $\lambda, C > 0$ so that $|d(x, y) - \lambda d'(x, y)| < C$ for all $x, y \in G$. When $\lambda = 1$, we say $d, d'$ are \textit{roughly isometric}. One characterization of roughly isometric metrics is via the \textit{(stable) translation length}, which is defined by
$$\ell_{d}(x) \coloneqq \lim_{n \to \infty}\frac{d(e, x^{n})}{n}$$ for $d \in \mathcal{D}(G)$ and $x \in G$.
\begin{thm}\label{thm: translationisometric}
    Let $G$ be a non-elementary hyperbolic group. Two metrics $d, d' \in \mathcal{D}(G)$ are roughly isometric if and only if for any $x \in G$, 
    $$\ell_{d}(x) =\ell_{d'}(x).$$
\end{thm}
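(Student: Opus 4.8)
The plan is to prove Theorem~\ref{thm: translationisometric} by leveraging the Manhattan curve machinery established in Theorem~\ref{thm: manhattanline}, together with the standard dictionary between translation lengths and the geometry of these curves. The forward direction is the easier half: if $d, d'$ are roughly isometric, then $|d(x,y) - d'(x,y)| < C$ uniformly, so in particular $|d(e, x^n) - d'(e, x^n)| < C$ for every $n$. Dividing by $n$ and letting $n \to \infty$, the bounded constant $C$ washes out, yielding $\ell_d(x) = \ell_{d'}(x)$ for all $x \in G$. This requires nothing beyond the definition of the translation length and rough isometry.

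The reverse direction is where the real content lies, and my plan is to argue contrapositively using convexity. Suppose $\ell_d(x) = \ell_{d'}(x)$ for all $x \in G$; I want to conclude rough isometry. First I would recall the known interpretation of the slope of the Manhattan curve $\mathcal{C}(d, d')$ (or its intercepts) in terms of a ratio of translation lengths — this is the content relating the linear structure of $\mathcal{C}(d, d')$ to the comparison of the two length functions, and it is essentially the substance behind \cite[Theorem~1.1]{cantrelltanaka}. The strategy is: if all translation lengths agree, then the natural comparison forces the Manhattan curve to be the straight line through $(0, h(d))$ and $(h(d'), 0)$ with the specific slope that reflects equality of length spectra, so by Theorem~\ref{thm: manhattanline} the metrics are roughly \emph{similar}, say $|d(x,y) - \lambda d'(x,y)| < C$. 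Then I would use the equality of translation lengths to pin down the similarity constant: applying the rough similarity to $x^n$ and dividing by $n$ gives $\ell_d(x) = \lambda\, \ell_{d'}(x)$ for every $x$. Since some $x$ (e.g. any infinite-order element, which exists as $G$ is non-elementary) has $\ell_{d'}(x) > 0$, the hypothesis $\ell_d(x) = \ell_{d'}(x)$ forces $\lambda = 1$, upgrading rough similarity to rough isometry.

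The main obstacle I anticipate is the step asserting that equality of \emph{all} translation lengths forces the Manhattan curve to be a straight line. Rough similarity only knows about the linear structure of $\mathcal{C}(d, d')$, so I must verify that the length-spectrum hypothesis is genuinely strong enough to produce linearity rather than merely constraining the curve pointwise. The cleanest route is to invoke the Manhattan curve's characterization via the logarithmic derivative / the identification of its tangent slopes with asymptotic ratios of the two metrics along geodesics, and to observe that agreement of translation lengths across the full group controls these asymptotic ratios uniformly (translation lengths determine the ``directional'' growth of $d$ versus $d'$ on the boundary, by density of axes of hyperbolic elements). I expect that \cite{cantrelltanaka} provides exactly this identification, so that the equal-length-spectra hypothesis degenerates the convex curve $\theta_{d,d'}$ to an affine function. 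If their paper states the rough-isometry characterization directly, the argument collapses to citing it; otherwise the work is in extracting linearity of $\mathcal{C}(d, d')$ from the translation-length data via their pressure/Manhattan-curve formalism, and then running the $\lambda = 1$ normalization above.
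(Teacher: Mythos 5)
Your proposal follows essentially the same route as the paper: the paper proves this theorem by simply combining Theorem~\ref{thm: manhattanline} with \cite[Lemma 3.4]{cantrelltanaka}, the latter being exactly the identification you anticipate needing (that the translation-length data controls linearity of the Manhattan curve), and your forward direction and the $\lambda = 1$ normalization via an element of positive translation length are the standard steps implicit in that combination. The one step you flag as a potential obstacle is precisely what the cited lemma supplies, so your argument collapses to the paper's citation as you predicted.
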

This result follows from combining Theorem \ref{thm: manhattanline} with \cite[Lemma 3.4]{cantrelltanaka}.

\begin{rmk}
    Cantrell and Tanaka use $v(d)$ to denote the volume growth; we choose to use $h$ instead to fit with our notation for volume entropy. Notation aside, let us also note that in \cite[Proposition 2.3]{cantrellreyes}, Cantrell and Reyes prove a stronger result: the Manhattan curve is strictly convex unless the two metrics are roughly similar. However, we will not need this stronger statement. 
\end{rmk}
\subsection{Small Cancellation Theory}
Small cancellation theory deals with situations where the overlaps between relators in a group presentation are relatively small compared to their lengths. Techniques from this area will be our main tool for establishing uniqueness of minimizers. We refer the reader to \cite{lyndonschupp} as a standard reference.

Let $G = \langle A | \mathcal{R} \rangle$ be a finite presentation for $G$. We will assume that $\mathcal{R}$ consists of \textit{cyclically reduced} words, meaning that any cyclic permutation of $r \in \mathcal{R}$ is still a reduced word. Moreover, we will often pass to the \textit{symmetrization} $\mathcal{R}^{*}$ of $\mathcal{R}$, the closure of $\mathcal{R}$ under taking cyclic permutations and inverses. Note that cyclically reducing a word or adding a cyclic permutation or inverse of a word does not change the normal closure, so $\langle A | \mathcal{R}^{*} \rangle$ remains a presentation for $G$. We call such a presentation \textit{symmetrized}.

\begin{defn}\label{def: piece}
    A \textit{piece} with respect to a group presentation $\langle A | \mathcal{R} \rangle$ is a nontrivial word $w \in F(A)$, such that there exist distinct $r, r' \in \mathcal{R}$ both beginning with $w$. 
\end{defn}

\begin{defn}\label{def: smallcancellation}
    Let $0 < \lambda < 1$. We say a symmetrized, cyclically reduced presentation $\langle A | \mathcal{R} \rangle$ satisfies the $C'(\lambda)$ \textit{small cancellation condition} if for any piece $w$ and any $r \in \mathcal{R}$ containing $w$, we have 
    $$|w| < \lambda |r|.$$ Here $|\cdot|$ denotes unweighted word length with respect to $A \cup A^{-1}$.
\end{defn}

We are most interested in $C'(\lambda)$ small cancellation when $\lambda \leq \frac{1}{6}$, due to the following result of Greendlinger \cite{greendlinger}:
\begin{thm}[Greendlinger's Lemma]\label{thm: greendlinger}
Let $\langle A | \mathcal{R} \rangle$ be a symmetrized, cyclically reduced presentation satisfying the $C'(\lambda)$ small cancellation condition for some $\lambda \leq \frac{1}{6}$. Then for any nontrivial $v \in \langle\langle \mathcal{R} \rangle \rangle$, there exists a relator $r \in \mathcal{R}$ and a common subword $u$ of $v$ and $r$ such that 
$$|u| > (1-3\lambda)|r|.$$
\end{thm}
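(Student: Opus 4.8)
The plan is to realize the relation $\overline v = 1$ in $G$ by a reduced van Kampen diagram and then run a combinatorial curvature (Gauss--Bonnet) argument, which is the standard route to Greendlinger-type conclusions. Since $v$ is reduced and nontrivial in $F(A)$ but trivial in $G = \langle A \mid \mathcal{R}\rangle$ (recall $\mathcal{R}$ is already symmetrized), van Kampen's lemma yields a simply connected planar diagram $D$ whose boundary cycle reads $v$ and each of whose $2$-cells is labelled by a relator of $\mathcal{R}$; as $v \neq 1$ in $F(A)$, the diagram has at least one face. I would choose $D$ with the minimal number of faces and consolidate maximal arcs, so that each edge of $D$ is a maximal segment shared by the two regions it separates. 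Minimality forces $D$ to be reduced: no interior edge may separate two faces with mutually inverse labels. Consequently, if an interior edge separates faces labelled $r$ and $r'$, then $r \neq (r')^{-1}$, so the edge label is a common prefix of two distinct relators and is therefore a \emph{piece} in the sense of Definition \ref{def: piece}. By the $C'(\lambda)$ condition, every interior edge thus has length strictly less than $\lambda|r|$ relative to each relator $r$ bounding it.

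I would dispatch the degenerate case first. If $D$ has exactly one face, its boundary cycle is freely equal to a cyclic conjugate of some $r^{\pm 1}$, and since $v$ is reduced the word $v$ then contains all of $r$ (up to a possibly cancelling terminal segment) as a subword; taking $u = r$ gives $|u| = |r| > (1 - 3\lambda)|r|$ at once. In general I would place combinatorial angles on the corners of $D$ and apply the combinatorial Gauss--Bonnet formula, whose total curvature equals $2\pi\chi(D) = 2\pi$ because $D$ is a disk. The hypothesis $\lambda \le \tfrac{1}{6}$ is exactly what makes this count work: six pieces of length $< \lambda|r|$ cannot cover a relator boundary of length $|r|$, so every interior face is bounded by at least seven maximal arcs, and one can arrange the angle assignment so that all interior faces and interior vertices carry nonpositive curvature. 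Positivity of the total curvature then forces a boundary face $\Pi$ with positive curvature, which combinatorially means $\Pi$ has at most three interior edges and meets $\partial D$ in a single boundary arc.

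It remains to read off the quantitative bound. Let $\Pi$ be such a boundary face, labelled by a relator $r$ with $|\partial\Pi| = |r|$ and carrying $k \le 3$ interior edges. Each interior edge is a piece, hence of length $< \lambda|r|$, so the arc of $\partial\Pi$ lying on $\partial D$ has length exceeding $|r| - k\lambda|r| \ge (1 - 3\lambda)|r|$. This arc spells a subword $u$ of the boundary word $v$ which is simultaneously a subword of $r$, and $|u| > (1-3\lambda)|r|$, which is the assertion of the lemma.

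The main obstacle is the curvature bookkeeping in the middle step: one must fix an angle assignment for which the $C'(1/6)$ inequality becomes nonpositivity of curvature at every interior face and interior vertex, and then extract from the Gauss--Bonnet balance a boundary face with at most three interior edges meeting $\partial D$ in a single arc. This is the precise point at which the threshold $\lambda \le \tfrac{1}{6}$ (equivalently $1 - 3\lambda \ge \tfrac{1}{2}$) is used. The remaining ingredients---existence of a reduced diagram, the identification of interior edges with pieces, and the final length estimate---are routine once this structural core is established.
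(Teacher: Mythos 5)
This theorem is not proved in the paper at all: it is quoted as a classical result with a citation to Greendlinger's original article, so there is no in-paper argument to compare against. Your sketch is the standard modern proof (essentially Lyndon--Schupp, Ch.~V): realize $\overline{v}=1$ by a reduced van Kampen diagram, observe that reducedness makes every interior edge a piece so that $C'(\lambda)$ bounds its length by $\lambda|r|$, and then use combinatorial Gauss--Bonnet with $\lambda\le\frac16$ to extract a boundary face meeting $\partial D$ in a single arc with at most three interior edges, whence the exterior arc has length exceeding $(1-3\lambda)|r|$. The outline is correct and the quantitative bookkeeping at the end is right. Two caveats. First, the step you yourself flag as the ``main obstacle''---the angle assignment and the deduction that some boundary face has a single boundary arc and at most three interior edges---\emph{is} the entire content of the lemma (it is Lyndon's curvature formula together with the $(3,7)$-analysis), so as a self-contained proof the proposal is an accurate roadmap rather than a complete argument; one also needs to rule out degenerate boundary faces whose closure is not an embedded disc, which is where minimality of the diagram is used again. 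Second, the arc you produce spells a subword of the \emph{boundary cycle}, i.e., a priori of a cyclic permutation of $v$ rather than of $v$ itself (it may straddle the basepoint); the usual fixes are either to invoke the stronger form of Greendlinger's lemma producing two disjoint such regions, at least one of which avoids the basepoint, or to reduce to the cyclically reduced core of $v$ and track the conjugating prefix. Note that on the relator side this issue is harmless precisely because $\mathcal{R}$ is symmetrized, so a subword of a cyclic permutation of $r$ is a subword of another element of $\mathcal{R}$---a point worth making explicit since the paper's statement asks for $u$ to be a subword of both $v$ and some $r\in\mathcal{R}$.
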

In other words, any nontrivial word representing the identity with respect to a presentation satisfying a small cancellation condition must contain a large portion of some relator. Another implication of a $C'(\lambda)$ small cancellation for $\lambda < \frac{1}{6}$ is hyperbolicity, which will be used later in this paper.

\subsection{Random Groups}
Groups come in all shapes and sizes, and it is difficult to name any non-trivial property satisfied by all groups. Models of random groups allow us to make the best of this situation, by formalizing what it means for ``most" groups to have some property. For a detailed reference, we refer the reader to Ollivier's survey \cite{olliviersurvey}.

As mentioned in the introduction, we will consider Gromov's density model of random groups, introduced in \cite{gromovhyperbolic} and expanded on in \cite{gromovrandom}. In this model, we begin with a generating set $A = \{a_{1}, \ldots, a_{m}\}$, $m \geq 2$, and a density parameter $0 \leq d \leq 1$. Considering the free group $F(A)$, for each length $\ell \in \mathbb{N}_{+}$ we select $(2m-1)^{d\ell}$ (rounding down when this is not an integer) cyclically reduced words of length $\ell$ uniformly randomly and with replacement. We take this set of cyclically reduced words to be our set of relators $\mathcal{R}_{\ell}$, and so we obtain a random group
$$G_{\ell} = F(A)/\langle \langle \mathcal{R}_{\ell} \rangle \rangle$$ for each $\ell$. We say a property $P$ of groups is \textit{generic} at density $d$ if the probability that $G_{\ell}$ has property $P$ approaches $1$ as $\ell \to \infty$. 

Some important generic properties include hyperbolicity and $C'(2d+\eps)$ small cancellation (for any $\eps > 0$) at density $d < \frac{1}{2}$ \cite[9.B]{gromovrandom}.

\begin{rmk}\label{rmk: reducedorcyclic}
    As noted by Ollivier in \cite{olliviersurvey}, one may wonder whether there is a difference between randomly selecting cyclically reduced words (as we do) or  reduced words. Similarly, there may be concerns about whether we should select randomly with replacement or not. Tracing through the relevant proofs in Section \ref{sec: randomgroups}, we see that these considerations do not matter, so we make our choices for consistency and convenience.
\end{rmk}

\section{Existence and Rough Uniqueness of Minimizers}\label{sec: existenceanduniqueness}
In this section $G$ will be a torsion-free, non-elementary hyperbolic group, and $S$ a finite, symmetric generating set for $G$. Let $\mathcal{W}$ denote the set of normalized weights on $S$, which is closed under taking convex combinations. Using basic properties of torsion-free hyperbolic groups, we will show the existence of a minimizing weight $w \in \mathcal{W}$ satisfying 
$$h(w) = \inf_{w' \in \mathcal{W}} h(w').$$ Then, using the convexity result of Cantrell-Tanaka \cite{cantrelltanaka}, we establish the uniqueness of this minimizing weight up to rough isometry of the induced weighted word metrics. As a simple application of this result, we show that surface groups (with the standard generators) have a unique normalized minimizing weight, which is in fact the uniform weight.

We will use the following basic properties of the volume entropy $h$:
\begin{itemize}
    \item For $\alpha > 0$, $d_{\alpha w} = \alpha d_{w}$, and $h(\alpha w) = \frac{1}{\alpha}h(w)$.
    \item If $d_{w} \leq d_{w'}$, then $h(w) \geq h(w')$.
    \item Identifying $\mathcal{W}$ with a subset of $\RR^{|S|}$ via the map $w \mapsto (w(s_{1}), w(s_{2}), \ldots, w(s_{|S|}))$, $h$ is a continuous function $\mathcal{W} \to \RR$.
\end{itemize}

\subsection{Existence} 
To establish the existence of minimizers, we show that the volume entropy $h$ approaches infinity as $w_{\min}= \min_{s \in S} w(s)$ approaches zero, hence the space of possible minimizers is compact. The existence of a minimizer then follows from the continuity of $h$. 

The main idea is that sufficiently high powers of any two noncommuting elements in a torsion-free, nonelementary hyperbolic group generate a free subgroup. This is a standard result due to Gromov. 
\begin{lem}[Gromov]\label{lem: freesubgroup}
    Let $G$ be a torsion-free, nonelementary hyperbolic group. There exists an integer $K = K(G)$ so that for any noncommuting elements $x, y \in G$, the subgroup $\langle x^{K}, y^{K} \rangle$ is a free subgroup.
\end{lem}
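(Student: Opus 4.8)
The plan is to prove the lemma by a ping-pong argument on the Gromov boundary $\partial G$, using torsion-freeness to pin down the boundary dynamics of $x$ and $y$ and using the \emph{uniform} geometry of a fixed Cayley graph to make the power $K$ independent of the pair. First I would recall the standard dynamical picture for hyperbolic groups (see \cite{drutukapovich}): since $G$ is torsion-free and non-elementary, every nontrivial element $g$ has infinite order and acts on $\partial G$ with north--south dynamics, fixing an attracting point $g^{+}$ and a repelling point $g^{-}$. In the torsion-free case the centralizer of any nontrivial $g$ is infinite cyclic, so $g$ lies in a unique maximal cyclic subgroup, which coincides with the maximal elementary subgroup $E(g)$ and is exactly the stabilizer of the pair $\{g^{+}, g^{-}\}$. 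The structural input I would invoke is that two nontrivial elements commute if and only if they share the same fixed-point pair, and that distinct maximal elementary subgroups have disjoint fixed-point sets (two loxodromics in a hyperbolic group cannot share exactly one boundary fixed point). Consequently, if $x$ and $y$ do not commute, the four points $x^{+}, x^{-}, y^{+}, y^{-}$ are pairwise distinct.

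With disjoint fixed-point pairs in hand, the classical ping-pong lemma applies. I would choose small pairwise disjoint neighborhoods $U^{\pm}, V^{\pm} \subseteq \partial G$ of $x^{\pm}, y^{\pm}$ and a sufficiently large power $K$ so that $x^{\pm K}$ carries the complement of $U^{\mp}$ into $U^{\pm}$, and likewise $y^{\pm K}$ carries the complement of $V^{\mp}$ into $V^{\pm}$. The ping-pong criterion then shows that every nontrivial reduced word in $x^{K}, y^{K}$ moves a suitable basepoint into one of these neighborhoods and hence is nontrivial in $G$, so $\langle x^{K}, y^{K}\rangle$ is free of rank two.

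The main obstacle, and the only step requiring genuine care, is that $K$ must be chosen \emph{uniformly} over all noncommuting pairs. For a fixed pair a working $K$ exists immediately, but a priori it depends on the contraction rates of $x, y$ and on how close their fixed points lie. To remove this dependence I would pass to a fixed generating set $S$ and exploit the uniform quantitative geometry of $\Cay(G, S)$: there is a single hyperbolicity constant $\delta$, and by Delzant's bound a uniform positive lower bound on the stable translation length of infinite-order elements (equivalently, a uniform integer after which all powers have translation length bounded below). Combined with the $\delta$-thin-triangles estimate, these give control on the contraction exhibited by $x^{K}, y^{K}$ in terms only of $K$, $\delta$, and $|S|$, independently of which noncommuting pair we started with, so a single $K = K(G)$ suffices. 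This uniformity is precisely the content of the uniform Tits alternative for hyperbolic groups, and I would either cite that result directly or extract the needed uniform ping-pong estimate from the thin-triangle condition; I expect the bookkeeping of the uniform contraction estimate to be where the real work lies.
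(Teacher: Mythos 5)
The paper does not actually prove this lemma: it is quoted as a standard result of Gromov and used as a black box, so there is no in-paper argument to compare yours against. Your ping-pong outline is the standard route to the statement, and the qualitative ingredients you list are all correct for a torsion-free non-elementary hyperbolic group: every nontrivial element is loxodromic with north--south dynamics, centralizers are infinite cyclic, two nontrivial elements commute exactly when they share their fixed-point pair, and two loxodromics cannot share exactly one boundary fixed point. Granting those, ping-pong gives a power $K(x,y)$ making $\langle x^{K}, y^{K}\rangle$ free for each fixed noncommuting pair.

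The one substantive caveat concerns the uniformity of $K$, which you correctly identify as the only real content but whose proposed derivation is thinner than it looks. A uniform lower bound on stable translation lengths (Delzant) together with $\delta$-thinness does not by itself yield uniform ping-pong: the obstruction is not small translation lengths but the fact that the fixed-point pairs of $x$ and $y$ can be arbitrarily close in $\partial G$ --- equivalently, their quasi-axes can fellow-travel for arbitrarily long (take $y = x^{n}z$ for large $n$, whose axis tracks that of $x$ for length roughly $n\,\ell(x)$). So the ping-pong neighborhoods cannot be chosen with uniform separation, and the naive requirement that $K\ell(x)$ exceed the overlap of the two axes is not achievable for a single $K$. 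The standard repair is a quantitative overlap lemma (if the axes of two loxodromics fellow-travel for longer than the sum of their translation lengths plus a multiple of $\delta$, they generate an elementary --- here cyclic --- subgroup) combined with an asymmetric ping-pong whose neighborhoods are adapted to the pair; this is precisely the content of the uniform free-subgroup statements of Delzant and Koubi. Since you also offer to cite the uniform result directly --- which is exactly what the paper does --- the proposal is acceptable as an outline, but the ``bookkeeping'' you defer is genuinely the whole theorem, and the specific estimate you sketch would not close it as written.
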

Therefore, we can bound $h$ from below by the entropy of a weighted $F_{2}$. Then, using the following formula, derived by Balacheff-Merlin \cite{balacheffmerlin} using the calculations in \cite{lim}, we see that $h$ approaches infinity as $w_{\min}$ approaches zero.
\begin{lem}[{\cite[Lemma 5]{balacheffmerlin}}]\label{lem: balacheffmerlin}
    Consider the free group $F_{k}$, $k \geq 2$, with the standard generating set $S = \{a_{1}^{\pm}, \ldots, a_{k}^{\pm}\}$. Given a symmetric weight $w \colon S \to (0, \infty)$ on $S$, the entropy $h = h(w)$ satisfies
    $$\sum_{i=1}^{k}\frac{1}{1+e^{w(a_{i})h}} = \frac{1}{2}.$$
\end{lem}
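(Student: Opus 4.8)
The plan is to exploit the fact that $F_k$ has unique reduced-word representatives, reducing the entropy computation to counting reduced words by their total weight, and then to extract the exponential growth rate from an explicit generating function.

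First I would observe that in a free group each group element is represented by a unique reduced word, and since deleting a cancelling pair $ss^{-1}$ strictly decreases total weight (all weights being positive), this reduced word is the unique $w$-geodesic. Hence $|B_w(R)|$ equals the number of reduced words $u \in F(A)$ with $|u|_w \le R$. I would then pass to the Poincar\'e series $P(s) = \sum_{u \text{ reduced}} e^{-s|u|_w}$, a Laplace-type series in the real parameter $s$, and invoke the standard fact that its abscissa of convergence $s_c$ equals $\limsup_{R\to\infty} R^{-1}\ln|B_w(R)| = h$. Thus it suffices to locate $s_c$.

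To compute $P(s)$ explicitly, set $x_i = e^{-s w(a_i)}$ for each generator $a_i$, noting that $w(a_i)=w(a_i^{-1})$ so both letters of a pair carry the same value. For each letter $t \in S$ let $R_t$ denote the contribution of the nonempty reduced words beginning with $t$; decomposing such a word as $t$ followed by the empty word or a reduced word not beginning with $t^{-1}$ gives the system $R_t = x_t\,(T - R_{t^{-1}})$, where $T = 1 + \sum_{t \in S} R_t = P(s)$. The symmetry of the weights forces $R_{a_i} = R_{a_i^{-1}}=: R_i$, so $R_i = x_i T/(1+x_i)$, and summing over the $2k$ letters yields
$$T = \frac{1}{\,1 - 2\sum_{i=1}^k \frac{x_i}{1+x_i}\,}.$$
Finally, since $T = P(s)$ is a sum of positive terms, it is finite exactly where the denominator is positive and blows up as the denominator decreases to $0$; hence $s_c$ is the value of $s$ at which $1 - 2\sum_i x_i/(1+x_i) = 0$. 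Rewriting $x_i/(1+x_i) = 1/(1+x_i^{-1}) = 1/(1+e^{s w(a_i)})$ and substituting $s = h = s_c$ gives exactly $\sum_{i=1}^k (1+e^{w(a_i)h})^{-1} = \frac12$, as claimed.

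I expect the main obstacle to be the rigorous justification of the Tauberian step: that the abscissa of convergence of this continuous-parameter transform coincides with the exponential growth rate of the ball counts, and that $s_c$ is genuinely the denominator's zero rather than some smaller value where $T$ might still be finite. Both points require care precisely because the weights are real-valued, so one must work with a Laplace--Stieltjes transform rather than an ordinary power series; the generating-function recursion itself is routine once convergence for $s > s_c$ is secured.
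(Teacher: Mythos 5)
Your derivation is correct. Note that the paper does not prove this lemma at all: it is imported verbatim from Balacheff--Merlin, who obtain it by specializing Lim's system of equations for the entropy of a finite metric graph to the wedge of $k$ circles. Your Poincar\'e-series recursion on the Cayley tree is essentially that same computation carried out directly, so there is no real methodological divergence, only a self-contained write-up of what the paper outsources. The two worries you flag at the end are genuine but both close by standard arguments. First, for a general Dirichlet series $\sum_u e^{-s|u|_w}$ with positive coefficients, the abscissa of convergence equals $\limsup_R R^{-1}\ln|B_w(R)|$; this is classical and needs no Tauberian theorem beyond the elementary comparison of the series with the counting function. Second, to see that $s_c$ is exactly the zero $s_0$ of $D(s)=1-2\sum_i x_i/(1+x_i)$ rather than something smaller or larger: for $s>s_c$ the identity $T(s)D(s)=1$ together with $T(s)\geq 1$ forces $D(s)>0$, hence $s>s_0$ by strict monotonicity of $D$, giving $s_c\geq s_0$; conversely, for $s>s_0$ one bounds the partial sums $T_n$ over words of length at most $n$ via $R_{i,n}\leq \tfrac{x_i}{1+x_i}T_n$ (which follows from $R_{i,n}=x_i(T_{n-1}-R_{i,n-1})\leq x_i(T_n-R_{i,n})$), whence $T_n\leq 1/D(s)$ for all $n$ and the series converges, giving $s_c\leq s_0$. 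With those two sentences added your argument is complete.
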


Putting these results together, we have:
\begin{prop}\label{prop: compactness}
    Let $(G, S)$ be a torsion-free, nonelementary hyperbolic group with a symmetric generating set, and $w \colon S \to (0, \infty)$ a weight on $S$. For each $N > 0$ there exists $\delta > 0$ so that $w_{\min} < \delta$ implies $h(w) > N$.
\end{prop}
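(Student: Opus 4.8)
The plan is to bound $h(w)$ from below by the volume entropy of a weighted rank-two free subgroup of $G$, and then use the explicit formula of Lemma \ref{lem: balacheffmerlin} to force this lower bound to infinity as $w_{\min}\to 0$. First I would fix a generator $s\in S$ realizing $w(s)=w_{\min}$. Since $s$ is nontrivial and $G$ is non-elementary (hence not virtually cyclic, and with infinite cyclic centralizers because $G$ is torsion-free hyperbolic), there is some $y\in G$ not commuting with $s$; by Lemma \ref{lem: freesubgroup} together with the cyclicity of centralizers one checks that $s^{K}$ and $y^{K}$ do not commute and that $H\coloneqq\langle s^{K},y^{K}\rangle$ is free of rank two on the basis $a\coloneqq s^{K}$, $b\coloneqq y^{K}$. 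As $S$ is finite, I can choose one such $y=y_{s}$ for each generator and set $M_{0}\coloneqq K\max_{s\in S}|y_{s}|$, a constant depending only on $(G,S)$.

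Next I would compare the restriction of $d_{w}$ to $H$ with a weighted word metric on $F_{2}$. Writing $W_{a}\coloneqq d_{w}(e,a)\le K\,w(s)=K\,w_{\min}$ and $W_{b}\coloneqq d_{w}(e,b)\le K|y_{s}|\,w_{\max}\le M_{0}$ — where the crucial bound $w_{\max}\le 1$ comes from normalization — left-invariance and the triangle inequality give, for every $h\in H$ with reduced $F_{2}$-spelling using $n_{a}$ letters $a^{\pm1}$ and $n_{b}$ letters $b^{\pm1}$, the estimate $d_{w}(e,h)\le n_{a}W_{a}+n_{b}W_{b}=d_{w'}(e,h)$, where $w'$ is the weight on the standard generators of $F_{2}$ with $w'(a)=W_{a}$ and $w'(b)=W_{b}$. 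Thus $d_{w}|_{H}\le d_{w'}$, so $\{h\in H:d_{w'}(e,h)\le R\}\subseteq B_{w}(R)$ and hence $|B_{w}(R)|\ge|B^{F_{2}}_{w'}(R)|$ for all $R$. Taking logarithms and letting $R\to\infty$ yields $h(w)\ge h(F_{2},w')\eqqcolon h'$.

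Finally, I would apply Lemma \ref{lem: balacheffmerlin} with $k=2$: the value $h'$ is the unique solution of $\frac{1}{1+e^{W_{a}h'}}+\frac{1}{1+e^{W_{b}h'}}=\frac{1}{2}$. Since the left side is strictly decreasing in $h'$, the solution is decreasing in each of $W_{a},W_{b}$; hence $h'\ge h_{0}(W_{a})$, where $h_{0}(t)$ solves $\frac{1}{1+e^{th_{0}}}+\frac{1}{1+e^{M_{0}h_{0}}}=\frac{1}{2}$ with $M_{0}$ fixed. A short single-variable analysis shows $h_{0}(t)\to\infty$ as $t\to0^{+}$: if $h_{0}$ stayed bounded then the first term would tend to $\tfrac{1}{2}$ while the second remains positive, contradicting the equation. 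Given $N$, I then pick $\delta$ so small that $h_{0}(K\delta)>N$; for any $w$ with $w_{\min}<\delta$ we get $W_{a}\le K\,w_{\min}<K\delta$ and $W_{b}\le M_{0}$, whence $h(w)\ge h'\ge h_{0}(W_{a})\ge h_{0}(K\delta)>N$ by monotonicity. I expect the main obstacle to be exactly the control of $W_{b}$: without the normalization bound $w_{\max}\le1$, the companion generator's weight could blow up as $w_{\min}\to0$, and on $F_{2}$ one checks that making one weight tiny and the other huge drives the entropy to $0$ rather than to infinity, so normalization is what rules this degeneracy out. The secondary point requiring care is verifying that $\langle s^{K},y^{K}\rangle$ is genuinely free of rank two and not merely cyclic, which is where the cyclicity of centralizers in torsion-free hyperbolic groups enters.
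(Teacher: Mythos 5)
Your proof is correct and follows essentially the same route as the paper: embed a weighted free group $\langle s^{K}, y^{K}\rangle$ via Lemma \ref{lem: freesubgroup}, bound $h(w)$ below by its entropy, and drive that entropy to infinity using the Balacheff--Merlin equation as the weight of $s^{K}$ tends to zero while the companion weight stays bounded (by normalization). The only cosmetic difference is that the paper takes the noncommuting companion to be another generator $s'\in S$ (via centerlessness), which gives the bound $d_{w}(e,s'^{K})\le Kw(s')<K$ directly and avoids your auxiliary constant $M_{0}$.
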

\begin{proof}
    Let $K$ be the integer guaranteed by Lemma \ref{lem: freesubgroup}. For a fixed $N > 0$, there exists $\delta' > 0$ so that the free group $(F_{2}, \{a^{\pm}, b^{\pm}\})$ with weight $w'(a^{\pm}) < \delta'$, $w'(b^{\pm}) < K$ has entropy $h(F_{2}, w')> N$. This is straightforward from Lemma \ref{lem: balacheffmerlin}, since 
    $$\frac{1}{1+e^{a N}}  + \frac{1}{1+e^{KN}}> \frac{1}{2}$$ for sufficiently small $a$. Now, let $\delta = \delta'/K$, and suppose that $w_{\min} < \delta$. Let $s \in S$ be a generator with $w(s) = w_{\min}$. Since a torsion-free nonelementary hyperbolic group is centerless, there exists $s' \in S$ such that $s, s'$ do not commute. By Lemma \ref{lem: freesubgroup}, $\langle s^{K}, s'^{K} \rangle$ is a free subgroup of $G$. Therefore the entropy $h(G, w)$ is bounded below by the entropy $h(F_{2}, w')$, where $w'(a) = d_{w}(e, s^{K})$ and $w'(b) = d_{w}(e, s'^{K})$. Noting that $d_{w}(e, s^{K}) \leq Kw(s) < \delta'$, and $d_{w}(e, s'^{K}) \leq Kw(s') < K$, we conclude that $h(G, w) \geq h(F_{2}, w') > N$.
\end{proof}
The existence of minimizers follows.
\begin{prop}\label{prop: existence}
    Let $(G, S)$ be a torsion-free, nonelementary hyperbolic group with a symmetric generating set. There exists a normalized weight $w \in \mathcal{W}$ with 
    $$h(w) = \inf_{w' \in \mathcal{W}}h(w').$$
\end{prop}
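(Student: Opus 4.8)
The plan is to combine the compactness-type estimate of Proposition~\ref{prop: compactness} with the continuity of $h$ to run a standard direct-method argument. The key observation is that Proposition~\ref{prop: compactness} lets us restrict attention to a compact subset of the weight space on which the infimum is attained.

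First I would fix a reference weight, for instance the uniform weight $w_{*} \in \mathcal{W}$ given by $w_{*}(s) = 1/|S|$, and set $M = h(w_{*})$. Since we are minimizing, it suffices to search among weights $w \in \mathcal{W}$ with $h(w) \leq M$. By Proposition~\ref{prop: compactness}, applied with $N = M$, there exists $\delta > 0$ so that $w_{\min} < \delta$ forces $h(w) > M$. Hence every weight in our search space satisfies $w_{\min} = \min_{s \in S} w(s) \geq \delta$. Concretely, define
$$\mathcal{W}_{\delta} \coloneqq \{w \in \mathcal{W} \colon w(s) \geq \delta \text{ for all } s \in S\}.$$
The preceding remarks show $\inf_{w' \in \mathcal{W}} h(w') = \inf_{w' \in \mathcal{W}_{\delta}} h(w')$, since any minimizing sequence may be assumed to lie in $\mathcal{W}_{\delta}$ once its entropy drops to or below $M$.

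Next I would observe that $\mathcal{W}_{\delta}$ is compact. Indeed, under the identification of $\mathcal{W}$ with a subset of $\RR^{|S|}$, the set $\mathcal{W}$ is the intersection of the closed hyperplane $\{\sum_{s} w(s) = 1\}$ with the open positive orthant, and $\mathcal{W}_{\delta}$ is the further intersection with the closed halfspaces $\{w(s) \geq \delta\}$. This is a closed and bounded subset of $\RR^{|S|}$ (boundedness follows since each coordinate lies in $[\delta, 1]$), hence compact by Heine--Borel. Since $h$ is continuous on $\mathcal{W}$ by the third basic property listed above, its restriction to the compact set $\mathcal{W}_{\delta}$ attains a minimum at some $w \in \mathcal{W}_{\delta} \subseteq \mathcal{W}$, and by the reduction above this $w$ satisfies $h(w) = \inf_{w' \in \mathcal{W}} h(w')$.

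I do not anticipate a serious obstacle here, as all the heavy lifting has been done in Proposition~\ref{prop: compactness} (which in turn relies on the free-subgroup input of Lemma~\ref{lem: freesubgroup} and the explicit entropy formula of Lemma~\ref{lem: balacheffmerlin}). The only point requiring a little care is the reduction step: one must confirm that restricting to $\mathcal{W}_{\delta}$ genuinely does not change the infimum, which amounts to checking that no weight with some coordinate below $\delta$ can beat $h(w_{*})$, precisely the content of the compactness proposition. Once that is in place, compactness and continuity finish the proof immediately.
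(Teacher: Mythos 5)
Your proposal is correct and follows exactly the paper's argument: take the uniform weight as a reference, apply Proposition~\ref{prop: compactness} with $N = h(w_{*})$ to restrict the infimum to the compact set $\mathcal{W}_{\geq \delta}$, and conclude by continuity of $h$. The only difference is that you spell out the compactness of $\mathcal{W}_{\delta}$ and the reduction of the infimum in more detail than the paper does.
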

\begin{proof}
    Let $w_{*}$ be the uniform weight given by $w_{*}(s) = \frac{1}{|S|}$ for all $s \in S$, and let $H = h(w_{*})$. By Proposition \ref{prop: compactness}, there exists $\delta > 0$ so that $w_{\min} < \delta$ implies $h(w) > H$. Therefore, 
    $$\inf_{w' \in \mathcal{W}}h(w') = \inf_{w' \in \mathcal{W}_{\geq \delta}}h(w'),$$ where 
    $\mathcal{W}_{\geq \delta} = \{w \in \mathcal{W} \colon w_{\min} \geq \delta\}.$ This set is compact, so by continuity of $h$ the infimum is achieved. 
\end{proof}

\begin{rmk}
    Let us note that the torsion-free assumption is necessary for existence. Consider $G = F_{2} \times \ZZ_{2}$, with generating set $S = \{a^{\pm}, b^{\pm}, 1\}$. For any weight $w$ on $S$, it is easy to see that $h(w)$ is equal to the volume entropy of $(F_{2}, \{a^{\pm}, b^{\pm}\})$ with the weight $w|_{F_{2}}$, given by $w|_{F_{2}}(a^{\pm}) = w(a^{\pm})$, $w|_{F_{2}}(b^{\pm}) = w(b^{\pm})$. 
    Therefore, for a normalized weight $w$, $h(w)$ can always be decreased while preserving normalization by decreasing $w(1)$ and scaling up $w(a), w(b)$. It follows that there is no minimizing weight.
\end{rmk}

\subsection{Convexity and Rough Uniqueness}
Now, we show that the map $w \mapsto h(w)$ is convex on $\mathcal{W}$, and in fact is strictly convex up to rough isometry of the induced metric $d_{w}$. We will show that this is true more generally for $d \in \mathcal{D}(G)$, which by Proposition \ref{prop: weightedishyperbolic} contains all $d_{w}$. 

\begin{prop}\label{prop: roughconvexity}
    Let $d_{0}, d_{1} \in \mathcal{D}(G)$ and $t \in [0, 1]$ be arbitrary. Let $d_{t} = (1-t)d_{0} + td_{1} \in \mathcal{D}(G)$. Then we have 
    $$h(d_{t}) \leq (1-t)h(d_{0}) + th(d_{1}).$$ Moreover, if equality holds and $t \neq 0, 1$, then $d_{0}, d_{1}$ are roughly isometric. 
\end{prop}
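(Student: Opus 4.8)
The plan is to read $h(d_t)$ off the Manhattan curve $\mc{C}(d_0,d_1)$ and then exploit its convexity. The key observation is that for any $\alpha \ge 0$ the value of the defining series at the point $\alpha(t,1-t)$ is
$$\sum_{x \in G} e^{-\alpha t\, d_1(e,x) - \alpha(1-t)\, d_0(e,x)} = \sum_{x \in G} e^{-\alpha\, d_t(e,x)},$$
whose abscissa of convergence is exactly $h(d_t)$. Hence the ray from the origin in direction $(t,1-t)$ meets $\mc{C}(d_0,d_1)$ precisely at $\bigl(t\,h(d_t),\,(1-t)\,h(d_t)\bigr)$; equivalently, $\theta_{d_0,d_1}\bigl(t\,h(d_t)\bigr) = (1-t)\,h(d_t)$. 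This is the bridge that lets the convexity of $\theta := \theta_{d_0,d_1}$ do all the work.

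Next I would use that $\theta$ is convex with $\theta(0) = h(d_0)$ and $\theta(h(d_1)) = 0$. Convexity says $\theta$ lies on or below the chord joining $(0,h(d_0))$ and $(h(d_1),0)$; writing this chord inequality at $a^{*} = t\,h(d_t)$ and solving for $h(d_t)$ gives
$$h(d_t) \le \frac{h(d_0)\,h(d_1)}{(1-t)\,h(d_1) + t\,h(d_0)},$$
the weighted harmonic mean of $h(d_0),h(d_1)$ with weights $(1-t,t)$. Since the harmonic mean is dominated by the arithmetic mean, this yields $h(d_t) \le (1-t)h(d_0) + t\,h(d_1)$, as desired. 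All quantities are positive because a non-elementary hyperbolic group has exponential growth in every metric of $\mc{D}(G)$, so there are no degeneracies. (I considered attacking the inequality directly by H\"older on the Poincar\'e series, but the natural conjugate exponents only give $h(d_t)\le\max(h(d_0),h(d_1))$, which is too weak; the Manhattan curve is genuinely sharper because it records the correlation between $d_0$ and $d_1$.)

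For the equality statement, suppose $t \in (0,1)$ and $h(d_t) = (1-t)h(d_0) + t\,h(d_1)$; then both displayed inequalities are equalities. Equality in the arithmetic--harmonic mean inequality, with both weights positive, forces $h(d_0) = h(d_1)$. Equality in the chord bound means the convex function $\theta$ meets its chord at the interior point $a^{*}$, which by a standard convexity fact upgrades to $\theta$ being affine on the whole segment between $(0,h(d_0))$ and $(h(d_1),0)$, so $\mc{C}(d_0,d_1)$ is a straight line. By Theorem \ref{thm: manhattanline} this means $d_0$ and $d_1$ are roughly similar, say $|d_0(x,y) - \lambda d_1(x,y)| < C$. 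Finally, rough similarity with constant $\lambda$ forces $h(d_0) = \lambda^{-1}h(d_1)$, and together with $h(d_0) = h(d_1)$ this gives $\lambda = 1$, i.e.\ $d_0$ and $d_1$ are roughly isometric.

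The hard part, and the step I would scrutinize most carefully, is the equality analysis---specifically the passage from ``$\theta$ meets its chord at one interior point'' to ``$\mc{C}(d_0,d_1)$ is an honest straight line'' in the sense required by Theorem \ref{thm: manhattanline}. The elementary convexity argument gives affinity only on the closed segment between the two distinguished boundary points, so I would need to confirm either that this portion is all that Theorem \ref{thm: manhattanline} concerns, or that linearity on a subinterval propagates to the whole curve (for instance via analyticity of the associated pressure function, or by invoking the strict-convexity strengthening of Cantrell--Reyes noted in the Remark). Everything else---identifying the boundary point, the chord inequality, and the $\lambda=1$ deduction---is routine.
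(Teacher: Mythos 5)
Your proposal is correct and follows essentially the same route as the paper: identify $\bigl(t\,h(d_t),(1-t)\,h(d_t)\bigr)$ as a point on $\mathcal{C}(d_0,d_1)$, apply the chord bound from convexity of $\theta_{d_0,d_1}$ to get the harmonic-mean estimate, and in the equality case force the curve to be a line and deduce $\lambda=1$ from $h(d_0)=h(d_1)$. The one step you flag for scrutiny---passing from affinity of $\theta$ on the segment between $(0,h(d_0))$ and $(h(d_1),0)$ to the hypothesis of Theorem \ref{thm: manhattanline}---is treated at exactly the same level of detail in the paper's own proof, which likewise resolves it by the convexity of the curve (with the Cantrell--Reyes strict-convexity result available as a fallback).
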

\begin{proof}
    We know that 
    \begin{align*}
    h(d_{t}) &= \inf\left\{a \in \RR \colon \sum_{x \in G} \exp(-ad_{t}(e, x)) < \infty\right\}\\
    &= \inf\left\{a \in \RR \colon \sum_{x \in G} \exp(-atd_{1}(e, x) - a(1-t)d_{0}(e, x)) < \infty\right\},
    \end{align*} where the first equality is equivalent to the fact that $(h(d_{t}), 0)$ is on the Manhattan curve $\mathcal{C}(d_{t}, d_{t})$.

    Let $\theta_{d_{0}, d_{1}}$ be the function parameterizing the Manhattan curve $\mathcal{C}(d_{0}, d_{1})$. By the above equality, along with the definition of the Manhattan curve, we have 
    $$(1-t)h(d_{t}) = \theta_{d_{0}, d_{1}}(th(d_{t})).$$
    
    We know that $\theta_{d_{0}, d_{1}}$ is convex, and we have $\theta_{d_{0}, d_{1}}(0) = h(d_{0})$, $\theta_{d_{0}, d_{1}}(h(d_{1})) = 0$. Therefore, 
    \begin{align*}
    \theta_{d_{0}, d_{1}}(th(d_{t})) &\leq \left(1 - \frac{th(d_{t})}{h(d_{1})} \right)\theta_{d_{0}, d_{1}}(0) + \frac{th(d_{t})}{h(d_{1})}\theta_{d_{0}, d_{1}}(h(d_{1}))\\
    &= h(d_{0}) - \frac{h(d_{0})}{h(d_{1})}th(d_{t}).
    \end{align*}
    Combining with the previous equality and rearranging, we have
    $$[th(d_{0}) + (1-t)h(d_{1})]h(d_{t}) \leq h(d_{0})h(d_{1}),$$ and thus
    $$h(d_{t}) \leq \frac{h(d_{0})h(d_{1})}{th(d_{0}) + (1-t)h(d_{1})} \leq (1-t)h(d_{0}) + th(d_{1}),$$ proving the desired inequality.

    Here, the last inequality follows from the computation 
    \begin{align*}
&[th(d_{0}) + (1-t)h(d_{1})][(1-t)h(d_{0}) + th(d_{1})] - h(d_{0})h(d_{1}) \\
&= t(1-t)(h(d_{0}) - h(d_{1}))^{2} \geq 0,
\end{align*}
with equality holding only when either $t = 0, 1$ or $h(d_{0}) = h(d_{1})$. Therefore, if $t \neq 0, 1$ and $h(d_{t}) = (1-t)h(d_{0}) + th(d_{1})$, then $h(d_{0}) = h(d_{1})$. Moreover, all the previous inequalities must be equalities, and thus $(th(d_{t}), \theta_{d_{0}, d_{1}}(th(d_{t})))$ lies on the line connecting $(0, h(d_{0}))$, $(h(d_{1}), 0)$. By convexity of the Manhattan curve, the curve must be a straight line between $(0, h(d_{0}))$, $(h(d_{1}), 0)$. By Theorem \ref{thm: manhattanline}, we conclude there exists $\lambda > 0$ so that $d_{0}, \lambda d_{1}$ are roughly isometric. It is clear that roughly isometric metrics have the same volume entropy, hence 
$$h(d_{0}) = h(\lambda d_{1}) = \frac{1}{\lambda}h(d_{1}) = \frac{1}{\lambda}h(d_{0}),$$ and so $\lambda = 1$. Therefore, $d_{0}, d_{1}$ are roughly isometric, which proves the equality case. 
\end{proof}

Now, we discuss convex combinations of weights. 
\begin{lem}\label{lem: convexcombination}
    Let $w_{0}, w_{1} \in \mathcal{W}$, $t \in [0, 1]$ be arbitrary. Let $w_{t} = (1-t)w_{0} + tw_{1}$. Then 
    $$d_{w_{t}} \geq (1-t)d_{w_{0}} + td_{w_{1}}.$$
\end{lem}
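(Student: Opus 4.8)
The plan is to unwind the definitions and exploit the fact that the assignment $w \mapsto |u|_{w}$ is \emph{linear} in the weight for a fixed word $u$, while $d_{w}$ is a \emph{minimum} over representing words. Fix $x, y \in G$ and let $u = s_{1}\cdots s_{n}$ be a $w_{t}$-geodesic representative for $y^{-1}x$, so that $d_{w_{t}}(x, y) = |u|_{w_{t}}$. The entire argument hinges on comparing the total weight of this single fixed word $u$ under the three weights $w_{0}, w_{1}, w_{t}$.

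First I would record the linearity: since $w_{t}(s) = (1-t)w_{0}(s) + tw_{1}(s)$ for every generator $s \in S$, summing over the letters of $u$ gives
$$|u|_{w_{t}} = \sum_{i=1}^{n} w_{t}(s_{i}) = (1-t)\sum_{i=1}^{n} w_{0}(s_{i}) + t\sum_{i=1}^{n} w_{1}(s_{i}) = (1-t)|u|_{w_{0}} + t|u|_{w_{1}}.$$
Next I would invoke the minimizing property of the weighted word metric: because $u$ is \emph{one particular} word representing $y^{-1}x$, and $d_{w_{0}}(x, y)$ (resp.\ $d_{w_{1}}(x, y)$) is the minimum of $|\cdot|_{w_{0}}$ (resp.\ $|\cdot|_{w_{1}}$) over \emph{all} words representing $y^{-1}x$, we automatically have $|u|_{w_{0}} \geq d_{w_{0}}(x, y)$ and $|u|_{w_{1}} \geq d_{w_{1}}(x, y)$.

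Combining the displayed identity with these two lower bounds yields
$$d_{w_{t}}(x, y) = |u|_{w_{t}} = (1-t)|u|_{w_{0}} + t|u|_{w_{1}} \geq (1-t)d_{w_{0}}(x, y) + td_{w_{1}}(x, y),$$
which is exactly the claimed inequality. Since $x, y$ were arbitrary, this establishes $d_{w_{t}} \geq (1-t)d_{w_{0}} + td_{w_{1}}$.

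I do not expect any genuine obstacle here: the statement is essentially the observation that a minimum of an affine family of functionals is a concave function of the parameter, specialized to the word-length functionals. The only point requiring a moment's care is the asymmetry between the roles of $w_{t}$ and of $w_{0}, w_{1}$ — one must take the geodesic for the \emph{combined} weight $w_{t}$ and then estimate its length under $w_{0}$ and $w_{1}$, rather than the other way around, since it is the suboptimality of a $w_{t}$-geodesic for the individual weights that produces the inequality in the stated direction.
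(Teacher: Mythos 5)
Your proof is correct and is essentially identical to the paper's: both take a $w_{t}$-geodesic representative, use linearity of $|\cdot|_{w}$ in the weight, and bound $|u|_{w_{j}}$ below by $d_{w_{j}}$ for $j=0,1$. The only cosmetic difference is that the paper works at the identity and appeals to left-invariance, whereas you carry general $x,y$ throughout.
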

\begin{proof}
    Let $x \in G$ be arbitrary, and let $s_{1}, \ldots, s_{n} \in S$ be so that $x = \overline{s_{1} \cdots s_{n}}$ and $d_{w_{t}}(e, x) = \sum_{i=1}^{n}w_{t}(s_{i})$. Then for $j = 0, 1$
    we have $d_{w_{j}}(e, x) \leq \sum_{i=1}^{n}w_{j}(s_{i}).$ Therefore
    \begin{align*}
    d_{w_{t}}(e, x) = \sum_{i=1}^{n}w_{t}(s_{i}) &= (1-t)\sum_{i=1}^{n}w_{0}(s_{i}) + t\sum_{i=1}^{n}w_{1}(s_{i}) \\ &\geq (1-t)d_{w_{0}}(e, x) + td_{w_{1}}(e, x).
    \end{align*}The general claim follows by left-invariance.
\end{proof}

From this we can deduce strict convexity and uniqueness of minimizers, up to rough isometry of the induced metrics.
\begin{cor}\label{cor: roughconvexity}
   Let $w_{0}, w_{1} \in \mathcal{W}$, $t \in [0, 1]$ be arbitrary. Let $w_{t} = (1-t)w_{0} + tw_{1}$. Then 
   $$h(w_{t}) \leq (1-t)h(w_{0}) + th(w_{1}),$$ and if equality holds for $t \neq 0, 1$, the metrics $d_{w_{0}}, d_{w_{1}}$ are roughly isometric.
\end{cor}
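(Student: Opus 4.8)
The plan is to derive this corollary as a direct combination of the two preceding results: the metric-level convexity of Proposition \ref{prop: roughconvexity} and the comparison estimate of Lemma \ref{lem: convexcombination}, glued together by the monotonicity property of $h$ (namely that $d_w \leq d_{w'}$ implies $h(w) \geq h(w')$). The key observation is that the pointwise convex combination of the induced metrics, $d_t' \coloneqq (1-t)d_{w_0} + t d_{w_1}$, is itself a genuine metric in $\mathcal{D}(G)$, since $\mathcal{D}(G)$ is closed under convex combinations; this lets us apply Proposition \ref{prop: roughconvexity} to it even though $d_t'$ need not coincide with the weighted word metric $d_{w_t}$.

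Concretely, I would first note that Lemma \ref{lem: convexcombination} gives $d_{w_t} \geq d_t'$, so by monotonicity of the volume growth rate we obtain $h(w_t) = h(d_{w_t}) \leq h(d_t')$. Next I would apply Proposition \ref{prop: roughconvexity} with $d_0 = d_{w_0}$ and $d_1 = d_{w_1}$, which yields $h(d_t') \leq (1-t)h(d_{w_0}) + t h(d_{w_1}) = (1-t)h(w_0) + t h(w_1)$. Chaining these two inequalities produces the desired bound
\[
h(w_t) \leq h(d_t') \leq (1-t)h(w_0) + t h(w_1).
\]

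For the equality case, suppose $t \neq 0,1$ and $h(w_t) = (1-t)h(w_0) + t h(w_1)$. Then every inequality in the chain above must be an equality; in particular the second one forces $h(d_t') = (1-t)h(d_{w_0}) + t h(d_{w_1})$. This is exactly the equality situation of Proposition \ref{prop: roughconvexity}, whose conclusion is that $d_{w_0}$ and $d_{w_1}$ are roughly isometric, as claimed. I do not anticipate a serious obstacle here, since all the analytic content lives in Proposition \ref{prop: roughconvexity}; the only point requiring care is keeping track of the direction of the monotonicity inequality (larger metric, smaller entropy) and confirming that the equality hypothesis propagates through both steps of the chain so that the equality case of the proposition is genuinely applicable.
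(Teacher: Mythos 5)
Your proof is correct and follows essentially the same route as the paper: Lemma \ref{lem: convexcombination} plus monotonicity of $h$ gives $h(d_{w_t}) \leq h((1-t)d_{w_0}+td_{w_1})$, Proposition \ref{prop: roughconvexity} gives the convexity bound, and the equality case propagates back to the equality clause of that proposition. Your added remark that $(1-t)d_{w_0}+td_{w_1}$ lies in $\mathcal{D}(G)$ is a useful point of care that the paper leaves implicit.
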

\begin{proof}
    We have 
    \begin{align*}
        h(w_{t}) = h(d_{w_{t}}) &\leq h((1-t)d_{w_{0}} + td_{w_{1}})\\
        &\leq (1-t)h(d_{w_{0}}) + th(d_{w_{1}})\\
        &= (1-t)h(w_{0}) + th(w_{1}),
    \end{align*}
    the first inequality following from Lemma \ref{lem: convexcombination} and the second inequality following from Proposition \ref{prop: roughconvexity}. If equality holds for some $t \neq 0, 1$, then the inequalities above are equalities, and by the equality clause in Proposition \ref{prop: roughconvexity} we conclude that $d_{w_{0}}, d_{w_{1}}$ are roughly isometric.
\end{proof}

This brings us to the central claim of this subsection: minimizing weights are unique up to rough isometry of their induced weighted word metrics. This follows from Corollary \ref{cor: roughconvexity} and the fact that $\mathcal{W}$ is closed under convex combinations.
\begin{cor}\label{cor: roughuniqueness}
    Suppose $w_{0}, w_{1} \in \mathcal{W}$ minimize $h$. Then $d_{w_{0}}$ and $d_{w_{1}}$ are roughly isometric.
\end{cor}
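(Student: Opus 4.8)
The plan is to exploit the two facts that have just been assembled: the strict-convexity-up-to-rough-isometry estimate of Corollary~\ref{cor: roughconvexity}, and the fact (used implicitly already) that $\mathcal{W}$ is closed under convex combinations. Write $m \coloneqq \inf_{w' \in \mathcal{W}} h(w')$ for the minimal entropy, which is attained by the existence result Proposition~\ref{prop: existence}. Since $w_{0}, w_{1}$ both minimize, we have $h(w_{0}) = h(w_{1}) = m$. I would then fix any parameter $t \in (0,1)$ (for concreteness one may take $t = \tfrac{1}{2}$) and form the convex combination $w_{t} = (1-t)w_{0} + tw_{1}$.

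The key step is a squeeze on $h(w_{t})$. On one side, because $\mathcal{W}$ is closed under convex combinations we have $w_{t} \in \mathcal{W}$, so the defining property of the infimum gives $h(w_{t}) \geq m$. On the other side, Corollary~\ref{cor: roughconvexity} yields
$$h(w_{t}) \leq (1-t)h(w_{0}) + th(w_{1}) = (1-t)m + tm = m.$$
Combining the two bounds forces $h(w_{t}) = m = (1-t)h(w_{0}) + th(w_{1})$, i.e.\ the convexity inequality of Corollary~\ref{cor: roughconvexity} holds with equality.

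Finally, since $t \neq 0,1$, the equality clause of Corollary~\ref{cor: roughconvexity} applies directly and concludes that $d_{w_{0}}$ and $d_{w_{1}}$ are roughly isometric, as desired. There is no serious obstacle here: the entire content is that minimality pins $h(w_{t})$ against the convexity upper bound, activating the rigidity half of the previous corollary. The only point that needs to be stated cleanly is that the lower bound $h(w_{t}) \geq m$ is legitimate, which is exactly where closure of $\mathcal{W}$ under convex combinations is used; everything else is the already-established machinery tracing back through Proposition~\ref{prop: roughconvexity} to the Manhattan-curve characterization of Theorem~\ref{thm: manhattanline}.
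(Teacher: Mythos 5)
Your proof is correct and is exactly the argument the paper intends: the paper states that the corollary "follows from Corollary \ref{cor: roughconvexity} and the fact that $\mathcal{W}$ is closed under convex combinations," and your write-up simply makes that squeeze argument explicit. The only cosmetic remark is that your symbol $m$ for the minimal entropy collides with the paper's use of $m$ for the number of generators, so a different letter would be preferable.
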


\subsection{Surface Groups}

We apply the previous results to show that a surface group, equipped with the standard set of generators, has the uniform weight as its unique normalized minimizer. Consider the surface group $S_{g}$ of genus $g \geq 2$, which is torsion-free and non-elementary hyperbolic. Let $S = \{a_{1}^{\pm}, b_{1}^{\pm}, \ldots, a_{g}^{\pm}, b_{g}^{\pm}\}$ be the standard symmetric generating set for $S_{g}$, and take the standard presentation $$S_{g} = \langle a_{1}, b_{1}, \ldots, a_{g}, b_{g} |  [a_{1}, b_{1}] \cdots [a_{g}, b_{g}] = e\rangle.$$ 

The strategy is as follows. First, we use symmetries of the surface group to show that the uniform weight is a normalized minimizer. Then, by Corollary \ref{cor: roughuniqueness}, it suffices to check that non-uniform weights give rise to metrics which are not roughly isometric to the metric induced by the uniform weight.

We begin by stating the following lemma, showing powers of generators are geodesic words in surface groups. 

\begin{lem}[{\cite[Lemma 2.2]{sambusetti}}]\label{lem: surfacegeodesics}
   Let $x$ be a word on $S$ such that $x$ does not contain any subword of a cyclic permutation of  $r = [a_{1}, b_{1}] \cdots [a_{g}, b_{g}]$ or $r^{-1}$ of length $2g -2$. Then $x$ is a geodesic representative for $\overline{x} \in S_{g}$ with respect to the unweighted word length induced by $S$.
\end{lem}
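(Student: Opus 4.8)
The plan is to argue by contradiction using small cancellation theory, with Greendlinger's Lemma (Theorem \ref{thm: greendlinger}) as the main engine. First I would record the small cancellation structure of the standard presentation. Since each of the $4g$ letters $a_i^{\pm}, b_i^{\pm}$ occurs exactly once in $r$, two distinct cyclic permutations of $r$ (or of $r^{-1}$) cannot share a common prefix of length $\ge 1$; a short check comparing the length-$2$ cyclic subwords of $r$ with those of $r^{-1}$ shows that they form disjoint sets, so no length-$2$ word is a piece either. Hence every piece has length exactly $1$, and since $|r| = 4g$ the symmetrized presentation satisfies the $C'(\lambda)$ condition of Definition \ref{def: smallcancellation} for every $\lambda > \tfrac{1}{4g}$; as $g \ge 2$ this range includes values $\le \tfrac16$, so Theorem \ref{thm: greendlinger} applies.

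Note that a geodesic is reduced, so we may assume $x$ is reduced (a non-reduced word is never geodesic). Assume for contradiction that $x$ is reduced but not geodesic, and let $y$ be a geodesic representative of $\overline x$ with $|y| < |x|$. After cancelling any common prefix and common suffix of $x$ and $y$---which preserves the geodesicity of $y$ and, since it only deletes letters of $x$, preserves the hypothesis on $x$, while keeping $|y| < |x|$---the word $v = x y^{-1}$ is cyclically reduced, nontrivial, and satisfies $\overline v = e$. Applying Theorem \ref{thm: greendlinger} with $\lambda$ slightly larger than $\tfrac{1}{4g}$, the word $v$ contains a subword $u$ which is also a subword of some relator $r_0 \in \mathcal{R}^{*}$ with $|u| > (1-3\lambda)|r|$, hence $|u| \ge 4g - 3$. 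Writing $u = u_x u_y$ where $u_x$ is a suffix of $x$ and $u_y$ a prefix of $y^{-1}$ (one possibly empty), there are two ways to derive a contradiction: if $|u_x| \ge 2g - 2$, then $x$ contains a subword of length $\ge 2g-2$ lying inside $r_0$, contradicting the hypothesis; if instead $|u_y| > 2g = \tfrac12|r|$, then writing $r_0 = u_y c$ with $|c| < |u_y|$ and replacing the prefix $u_y$ of $y^{-1}$ by $c^{-1}$ produces a strictly shorter word representing $\overline{y^{-1}}$, contradicting that $y$ is geodesic.

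The main obstacle is that these two cases do not quite cover everything. Since $|u_x| + |u_y| = |u| \ge 4g - 3$, the only configuration escaping both is the borderline $|u_x| = 2g - 3$ and $|u_y| = 2g$, where the overlap on the $x$-side falls one letter short of the forbidden length and the overlap on the $y$-side is exactly half a relator, so the substitution above preserves length rather than shortening. Closing this gap is where the sharp constant $2g - 2$ really lives, and the single-subword estimate of Theorem \ref{thm: greendlinger} is one letter too weak to close it directly. I would resolve it by replacing that estimate with a finer analysis of a minimal van Kampen diagram $D$ for $v$ over the presentation: using the classification of reduced $C'(1/6)$ diagrams together with a combinatorial Gauss--Bonnet count on $D$---whose $2$-cells are $4g$-gons---one controls precisely how the boundary of each outermost cell distributes between the $x$-arc and the $y$-arc of $\partial D$. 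For the surface relator this bookkeeping rules out the borderline split, forcing an arc of length $\ge 2g - 2$ onto the $x$-side (contradicting the hypothesis) or an arc of length $> 2g$ onto the geodesic $y$-side (contradicting geodesicity), which completes the proof. This refined overlap estimate is the only genuinely delicate point; everything else is the bookkeeping above.
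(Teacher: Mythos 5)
The paper itself does not prove this lemma --- it is imported verbatim from Sambusetti --- so your attempt has to stand on its own. Most of it does: the symmetrized surface presentation has pieces of length exactly $1$ (each letter of $S$ occurs once in $r$, and the length-$2$ cyclic subwords of $r$ and of $r^{-1}$ are disjoint), so it is $C'(\lambda)$ for every $\lambda>\tfrac{1}{4g}$ and Greendlinger applies with $|u|\ge 4g-3$; the reduction to a cyclically reduced $v=xy^{-1}$ is done correctly (including preservation of the hypothesis on $x$ and of geodesicity of $y$); and the two easy cases ($|u_x|\ge 2g-2$ contradicts the hypothesis, $|u_y|>2g$ contradicts geodesicity) are handled correctly. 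You also correctly isolate the borderline split $(|u_x|,|u_y|)=(2g-3,2g)$ as the entire difficulty.

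The gap is that your proposed resolution of that case is not an argument, and I do not think it can be executed in the form you describe. A boundary region whose outer arc has length $4g-3$ and whose three interior edges are single letters is perfectly consistent with a reduced $C'$ diagram, and no curvature or Gauss--Bonnet count sees \emph{where} along that outer arc the vertex separating the $x$-side of $\partial D$ from the $y$-side falls; the borderline configuration is not ruled out by diagram bookkeeping --- it genuinely occurs. What closes the case is algebra rather than curvature: write $r_0=u_xu_yt$ with $|t|=3$ and replace the prefix $u_y$ of $y^{-1}$ by $u_x^{-1}t^{-1}$. This preserves the length of $y$, so either the result is unreduced (and $y$ was not geodesic) or it is a new geodesic $y''$; but then $x(y'')^{-1}$ freely cancels the $2g-3$ letters of $u_x$, leaving a relation of length $|x|+|y|-(4g-6)$ whose $x$-part is a proper prefix of $x$ and hence still satisfies the hypothesis, and whose $y$-part is strictly shorter than that prefix. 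One then concludes by induction on $|x|+|y|$ (the one-cell case forces $|x|>2g$, contradicting the hypothesis outright). Without this exchange-and-reduce step, or an equivalent, the proof is incomplete at exactly the point you flagged as delicate.
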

As a corollary, we conclude that powers $s^{n}$ of generators $s \in S$ are geodesic with respect to the uniform weight on $S$.
\begin{cor}\label{cor: surfacegeodesics}
    For any $s \in S$ and $n \geq 1$, $s^{n}$ is a geodesic word with respect to the uniform weight on $S$.
\end{cor}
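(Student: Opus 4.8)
The plan is to reduce to the unweighted case and then apply Lemma~\ref{lem: surfacegeodesics}. Since the uniform weight assigns $w(s) = 1/|S|$ to every generator, the induced metric satisfies $d_{w} = \frac{1}{|S|}d_{S}$, where $d_{S}$ is the unweighted word metric. Scaling a metric by a positive constant does not change which words realize the minimal length, so a word is a $w$-geodesic if and only if it is an unweighted geodesic. It therefore suffices to prove that $s^{n}$ is an unweighted geodesic for every $s \in S$ and $n \geq 1$.

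To invoke Lemma~\ref{lem: surfacegeodesics}, I must verify that $s^{n}$ contains no subword of length $2g-2$ which is also a subword of a cyclic permutation of $r = [a_{1}, b_{1}]\cdots[a_{g}, b_{g}]$ or of $r^{-1}$. The key elementary observation is that every (contiguous) subword of $s^{n}$ is a power $s^{k}$ of the single letter $s$. Hence if $n < 2g-2$ the hypothesis holds vacuously, while if $n \geq 2g-2$ the only subwords of $s^{n}$ of length $2g-2$ equal $s^{2g-2}$, which, since $g \geq 2$, consists of at least two consecutive equal letters.

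The main step is then a structural observation about the relator: \emph{no cyclic permutation of $r$ or of $r^{-1}$ contains two consecutive equal letters.} Writing $r = a_{1}b_{1}a_{1}^{-1}b_{1}^{-1}\cdots a_{g}b_{g}a_{g}^{-1}b_{g}^{-1}$, one checks that any two adjacent letters are distinct generators, and that the cyclic wrap-around pair $(b_{g}^{-1}, a_{1})$ is distinct as well; the same holds for $r^{-1}$, whose commutators expand as $b_{i}a_{i}b_{i}^{-1}a_{i}^{-1}$. Consequently no cyclic permutation of $r^{\pm 1}$ can contain $s^{2g-2}$ (a run of $\geq 2$ equal letters) as a subword, so $s^{n}$ meets the hypothesis of Lemma~\ref{lem: surfacegeodesics} and is an unweighted geodesic, hence a $w$-geodesic for the uniform weight.

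I expect the only point requiring any care to be the verification that no cyclic permutation of $r^{\pm 1}$ exhibits a repeated adjacent letter, in particular handling both the wrap-around adjacency and the inverse relator; but this is a finite, routine check on the alternating structure of a product of commutators, and nothing deeper is needed.
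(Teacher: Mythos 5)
Your proof is correct and follows essentially the same route as the paper: reduce to the unweighted case (the uniform weight just rescales the word metric), note that any length-$(2g-2)$ subword of $s^{n}$ is $s^{2g-2}$ and hence contains $s^{2}$, and observe that no cyclic permutation of $r^{\pm 1}$ contains $s^{2}$ because the product of commutators has no repeated adjacent letter, so Lemma~\ref{lem: surfacegeodesics} applies. The paper states this more tersely but the argument is identical.
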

\begin{proof}
    Note that being a geodesic word with respect to the uniform weight is equivalent to being a geodesic word with respect to the unweighted word length. Since any cyclic permutation of $r = [a_{1}, b_{1}] \cdots [a_{g}, b_{g}]$ or $r^{-1}$ does not contain $s^{2}$ as a subword for any $s \in S$, and since $2g - 2 \geq 2$ for $g \geq 2$, we are done by Lemma \ref{lem: surfacegeodesics}.
\end{proof}

Now we show that the uniform weight is indeed a normalized minimizer.

\begin{prop}\label{prop: uniformsurface}
    The uniform weight $w_{*}$ on $S$ is a minimizing normalized weight on $(S_{g}, S)$.
\end{prop}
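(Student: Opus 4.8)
The plan is to exploit the abundant symmetry of the standard presentation through a \emph{symmetrization} (averaging) argument, combined with the convexity of $h$ from Corollary \ref{cor: roughconvexity}. The guiding principle is that any automorphism $\gamma$ of $S_g$ which \emph{permutes} the generating set $S$ acts on the weight space by $w \mapsto w^{\gamma}$, where $w^{\gamma}(s) := w(\gamma(s))$, and leaves the entropy invariant. Indeed, $\gamma(s_1)\cdots\gamma(s_n)$ is again a word over $S$ representing $\gamma(\overline{s_1\cdots s_n})$ whose $w$-length equals the $w^{\gamma}$-length of $s_1\cdots s_n$; this gives $d_{w^{\gamma}}(e,x) = d_{w}(e,\gamma(x))$, hence $|B_{w^{\gamma}}(R)| = |B_{w}(R)|$ and $h(w^{\gamma}) = h(w)$. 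So the set of minimizers (which is nonempty by Proposition \ref{prop: existence}) is invariant under any group of generator-permuting automorphisms.

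First I would identify two such families. The cyclic handle rotation $\sigma\colon a_i \mapsto a_{i+1},\ b_i \mapsto b_{i+1}$ (indices mod $g$) sends $r = [a_1,b_1]\cdots[a_g,b_g]$ to a cyclic permutation of itself, hence is an automorphism permuting $S$; it makes all the $a_i$ mutually equivalent and all the $b_i$ mutually equivalent. The crucial second symmetry is the \emph{reflection} $\phi\colon a_i \mapsto b_{g+1-i},\ b_i \mapsto a_{g+1-i}$, which exchanges the $a$'s and $b$'s. A direct computation confirms it is an automorphism:
\[
\phi(r) = \prod_{i=1}^{g}[b_{g+1-i}, a_{g+1-i}] = \prod_{k=g}^{1}[a_k,b_k]^{-1} = \left([a_1,b_1]\cdots[a_g,b_g]\right)^{-1} = r^{-1},
\]
so $\phi$ preserves the normal closure $\langle\langle r\rangle\rangle$ and permutes $S$.

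Next I would set $\Phi := \langle \sigma, \phi\rangle$, which is finite since it acts through $\mathrm{Sym}(S)$, and observe that $\Phi$ acts \emph{transitively} on the $2g$ inverse-pairs $\{a_i^{\pm}\}$, $\{b_i^{\pm}\}$: the rotations identify all $a$-pairs among themselves and all $b$-pairs among themselves, while $\phi$ connects the two orbits (for instance $a_1 \mapsto b_g$). Since a symmetric normalized weight is precisely a positive function on these inverse-pairs, transitivity forces the only $\Phi$-invariant normalized weight to be the uniform one $w_*$.

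Finally comes the symmetrization step. Taking a minimizer $w_0$ and averaging, set $\bar w := \frac{1}{|\Phi|}\sum_{\gamma \in \Phi} w_0^{\gamma}$, which is a normalized weight and is $\Phi$-invariant, hence $\bar w = w_*$. Using $h(w_0^{\gamma}) = h(w_0)$ together with Corollary \ref{cor: roughconvexity},
\[
h(w_*) = h(\bar w) \le \frac{1}{|\Phi|}\sum_{\gamma\in\Phi} h(w_0^{\gamma}) = h(w_0) = \inf_{w\in\mathcal{W}} h(w),
\]
so $w_*$ attains the infimum and is a minimizer. The main obstacle is exhibiting the $a \leftrightarrow b$ symmetry $\phi$ and verifying $\phi(r) = r^{-1}$; once this geometric reflection is in hand, the remaining transitivity and averaging steps are formal. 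Note that this argument only establishes that $w_*$ is \emph{a} minimizer; the claim that it is the \emph{unique} normalized minimizer is taken up separately, via Corollary \ref{cor: roughuniqueness} and the fact (Corollary \ref{cor: surfacegeodesics}) that powers of generators are $w_*$-geodesics, which pins down the translation lengths and rules out non-uniform competitors.
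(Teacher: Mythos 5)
Your proposal is correct and follows essentially the same route as the paper: both identify the cyclic handle rotation and the reflection $a_j \mapsto b_{g+1-j}$, $b_j \mapsto a_{g+1-j}$ as generator-permuting automorphisms preserving the entropy, and both average a minimizer over these symmetries, using convexity (Corollary \ref{cor: roughconvexity}) and the transitivity of the symmetry group on inverse-pairs to conclude the average is the uniform weight. The only cosmetic difference is that you average over the full finite group generated by the two symmetries (and explicitly verify $\phi(r) = r^{-1}$), whereas the paper averages over the $2g$ specific automorphisms $\psi_i$ and $\psi_i \circ \varphi$ and computes the averaged weight directly.
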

\begin{proof}
    Let $w$ be a minimizing normalized weight on $(S_{g}, S)$, which exists by Proposition \ref{prop: existence}. Consider the automorphisms $\varphi, \psi_{i}$, $1 \leq i \leq g$, of $S_{g}$, induced by 
    $$\begin{cases}\psi_{i}(a_{j}) &= a_{j+i} \\ \psi_{i}(b_{j}) &= b_{j+i}\end{cases} \; \text{and} \; \begin{cases} \varphi(a_{j}) &= b_{g+1-j} \\ \varphi(b_{j}) &= a_{g+1-j},\end{cases}$$ where the indices are taken modulo $g$ (replacing $0$ with $g$). Define  $w_{\psi_{i}} = w \circ \psi_{i}$ and $w_{\varphi_{i}} = w \circ \psi_{i} \circ \varphi$, using the fact that $\varphi, \psi_{i}$ induce bijections $S \to S$. We see that for any $x \in S_{g}$, 
    $d_{w}(e, \psi_{i}(x)) = d_{w_{\psi_{i}}}(e, x),$ and so $h(w_{\psi_{i}}) = h(w)$ for each $i$. Similarly, $h(w_{\varphi_{i}}) = h(w)$ for each $i$. So, $w_{\psi_{i}}, w_{\varphi_{i}}$ are all minimizing normalized weights. 
    
    By Corollary \ref{cor: roughconvexity} any convex combination of minimizing normalized weights is once again a minimizing normalized weight, so we see that 
    $$\overline{w} = \frac{1}{|2g|}\sum_{i=1}^{g}(w_{\psi_{i}} + w_{\varphi_{i}})$$ must be a minimizing normalized weight as well. Computing, for each $j$ we have 
    \begin{align*}
    \overline{w}(a_{j}) &= \frac{1}{2g}\sum_{i=1}^{g}(w_{\psi_{i}}(a_{j}) + w_{\varphi_{i}}(a_{j})) \\
    &= \frac{1}{2g} \left(\sum_{i=1}^{g}w(\psi_{i}(a_{j})) + \sum_{i=1}^{g}w(\psi_{i}(\varphi(a_{j})))\right)\\
    &= \frac{1}{2g}\left(\sum_{i=1}^{g}w(a_{i}) + \sum_{i=1}^{g}w(b_{i})\right) = \frac{1}{4g},
    \end{align*}
    where the last equality follows because $w$ is normalized (we haven't accounted for inverses, hence the $\frac{1}{2}$ factor). Likewise, we see that $\overline{w}(b_{j}) = \frac{1}{4g}$ for all $j$, hence $\overline{w}$ is the uniform weight $w_{*}$ on $S$.
\end{proof}

Finally we have our desired result. 
\begin{thm}\label{thm: uniquesurface}
   The uniform weight $w_{*}$ on $S$ is the unique normalized weight on $(S_{g}, S)$ minimizing $h$.
\end{thm}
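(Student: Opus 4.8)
The plan is to combine the rough-uniqueness statement from Corollary \ref{cor: roughuniqueness} with the length-spectrum rigidity of Theorem \ref{thm: translationisometric}, reducing the whole problem to a computation of translation lengths of the generators. Let $w \in \mathcal{W}$ be an arbitrary minimizing normalized weight. Since $w_{*}$ is also a minimizer by Proposition \ref{prop: uniformsurface}, Corollary \ref{cor: roughuniqueness} tells us that $d_{w}$ and $d_{w_{*}}$ are roughly isometric, and hence Theorem \ref{thm: translationisometric} gives $\ell_{d_{w}}(x) = \ell_{d_{w_{*}}}(x)$ for every $x \in S_{g}$. I would apply this equality only to the generators $s \in S$.

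First I would compute the right-hand side exactly. By Corollary \ref{cor: surfacegeodesics} the word $s^{n}$ is $w_{*}$-geodesic, so $d_{w_{*}}(e, s^{n}) = n\, w_{*}(s) = n/(4g)$, and dividing by $n$ and letting $n \to \infty$ yields $\ell_{d_{w_{*}}}(s) = w_{*}(s) = 1/(4g)$ for every $s \in S$. On the left-hand side I only need the trivial upper bound: since $s^{n}$ is a word of $w$-weight $n\, w(s)$ representing $\overline{s^{n}}$, we have $d_{w}(e, s^{n}) \leq n\, w(s)$, whence $\ell_{d_{w}}(s) \leq w(s)$.

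Combining these facts with the equality of translation lengths gives $w(s) \geq \ell_{d_{w}}(s) = 1/(4g)$ for each of the $|S| = 4g$ generators. Finally I would invoke normalization: since $\sum_{s \in S} w(s) = 1 = 4g \cdot \tfrac{1}{4g}$ while every summand is at least $1/(4g)$, no single inequality can be strict, forcing $w(s) = 1/(4g) = w_{*}(s)$ for all $s$. Hence $w = w_{*}$, which is exactly the claim, and this also shows the minimizer is unique rather than merely unique up to rough isometry.

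The argument requires essentially no new estimates; the only potentially delicate point — that powers of generators are genuinely geodesic, so that their $w_{*}$-translation length equals $w_{*}(s)$ exactly rather than something smaller — has already been isolated in Corollary \ref{cor: surfacegeodesics} (via Sambusetti's Lemma \ref{lem: surfacegeodesics}). The conceptual content worth flagging is that one only needs a \emph{one-sided} bound on $\ell_{d_{w}}(s)$: the exact value on the uniform side, together with the linear normalization constraint, is what upgrades a family of inequalities into equalities and pins down $w$ uniquely.
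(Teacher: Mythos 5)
Your proposal is correct and follows essentially the same route as the paper: both use Proposition \ref{prop: uniformsurface}, Corollary \ref{cor: roughuniqueness}, Theorem \ref{thm: translationisometric}, and Corollary \ref{cor: surfacegeodesics} together with the one-sided bound $\ell_{d_{w}}(s) \leq w(s)$. The only difference is presentational — the paper runs the normalization step as a contradiction (if $w \neq w_{*}$ then some $w(s) < w_{*}(s)$) while you run it directly (all $w(s) \geq w_{*}(s)$ plus $\sum_{s} w(s) = 1$ forces equality) — which is the same argument in contrapositive form.
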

\begin{proof}
Suppose, for a contradiction, that $w$ is a normalized minimizing weight on $S$ with $w \neq w_{*}$. Then there exists some $s \in S$ with $w(s) < w_{*}(s)$. By Corollary \ref{cor: surfacegeodesics} we know that $s^{n}$ is a geodesic word with respect to $w_{*}$ for all $n$, so 
$$\ell_{d_{w_{*}}}(s) = w_{*}(s) > w(s) \geq \ell_{d_{w}}(s).$$ However, since $w_{*}, w$ are both normalized minimizers, by Corollary \ref{cor: roughuniqueness} the metrics $d_{w_{*}}$ and $d_{w}$ are roughly isometric, contradicting Theorem \ref{thm: translationisometric}.

\end{proof}
\begin{rmk}
    Note that the arguments in this section can also be applied to show that the uniform weight on the free group $F_{m}$ with standard generating $S = \{a_{1}^{\pm}, \ldots, a_{m}^{\pm}\}$ is a minimizer. The uniqueness of this minimizer is similarly clear, so this recovers a special case of Lim's results in \cite{lim}, viewing the Cayley graph of $(F_{m}, S)$ as the universal cover of a bouquet of $m$ circles. 
\end{rmk}

\section{Translation-Apparent Presentations and Strict Uniqueness} \label{sec: translationapparent}
As shown in the previous section, every torsion-free, non-elementary hyperbolic group $(G, S)$ admits a unique minimizing normalized weight up to rough isometry. When can we improve this to actual uniqueness, and avoid passing to a rough isometry class? 

We define a condition on a group presentation, that when satisfied, guarantees uniqueness. In fact, this condition will actually imply strict convexity, which in turn implies the uniqueness of the minimizer.  The key idea is that roughly isometric metrics have the same translation lengths of elements, hence we need a condition which guarantees distinct weights have distinct translation lengths. 

\subsection{Uniqueness} Given an alphabet $A$, $a \in A$, and $v \in F(A)$, let $\#_{a}(v)$, $\#_{a^{-1}}(v)$ denote the total number of occurrences of $a$ and $a^{-1}$ in $v$, respectively. Let $\#_{a^{\pm}}(v) = \#_{a}(v) + \#_{a^{-1}}(v)$. We denote word length with respect to $A \cup A^{-1}$ by $|\cdot|$.
\begin{defn}\label{def: translationapparentpresentation}
    We say a presentation $G = \langle A | \mathcal{R} \rangle$  is a $\lambda$-\textit{translation-apparent} presentation  for the marked group $(G, S)$ if the following conditions are satisfied:
    \begin{enumerate}
        \item $S = A \cup A^{-1},$
        \item (Small Cancellation) The presentation $\langle A | \mathcal{R} \rangle$ is symmetrized, cyclically reduced, and satisfies the $C'(\lambda)$ small cancellation condition,
        \item (Even Distribution) Every $r \in \mathcal{R}$ satisfies the following conditions: 
        \begin{enumerate}[(i)]
            \item For any $s \in S$, if $u$ is a subword of $r$ of the form $s^{n}$, then $n < \lambda|r|$. 
            \item For any $a \in A$, $$\#_{a^{\pm}}(u) < \frac{1}{2}\#_{a^{\pm}}(r)$$ for any subword $u$ of $r$ with $|u| = \lceil 4\lambda|r|\rceil$.
            \item For any $a \in A$,  $$\#_{a^{\pm}}(u) > \frac{1}{8m} |u|$$ for any subword $u$ of $r$ with $|u| = \lceil \lambda|r| \rceil $. Here $m = |A|$.
        \end{enumerate}
    \end{enumerate}
\end{defn}

To obtain useful results from the second even distribution condition, we will need $\lambda \leq \frac{1}{8}$, which will also come in handy when applying Greendlinger's lemma. In practice, we will take $\lambda = \frac{1}{16}$. The main consequence of the existence of a translation-apparent presentation is, as the name suggests, the translation length of a generator $a$ is equal to $w(a)$ for any weight $w$. 

To elaborate, the first and second even distribution conditions together will allow us to show that words of the form $s^{n}$, for $s \in S$, are $w$-geodesics for any weight $w$. Here, the first condition complements the use of Greendlinger's lemma, in order to show that any other representative for $\overline{s^{n}}$ must contain a large (in the unweighted sense) part of some relator. Then the second even distribution condition guarantees that even if one of the weights $w(s)$ is extremely large compared to the weights of other generators, it cannot skew all the weight to a small subword of a relator. From this it follows that any other representative has large weight, and thus fails to be geodesic. 

In fact, we can prove that a broader class of words, defined below, are $w$-geodesic for any $w$. Note that the third even distribution condition will not be used in this proof, but will be important for the counting arguments in the next subsection.

The following definition is given in \cite{shukhov}.  
\begin{defn}\label{def: reducedgeodesic}
    Let $G = \langle A | \mathcal{R}\rangle$ be a presentation for the marked group $(G, S)$. We say a word $x \in F(A)$ is $\alpha$-reduced if for any $r \in \mathcal{R}$, any common subword $u$ of $x$ and $r$ satisfies
    $|u| < \lceil\alpha |r|\rceil$. We denote the set of such words by $\mathcal{N}(\alpha)$.
\end{defn}
We can now generalize the argument in \cite[Lemma 1]{shukhov} to the weighted case.
\begin{prop}\label{prop: stronggeodesics}
 Let $G = \langle A | \mathcal{R}\rangle$ be a $\lambda$- translation-apparent presentation for the marked group $(G, S)$. Let $w$ be an arbitrary weight on $S$ (not necessarily normalized). Then any $x \in \mathcal{N}(\lambda)$ is the unique geodesic representative of $\overline{x} \in G$ with respect to $w$. In particular, any distinct $x, y \in \mathcal{N}(\lambda)$ satisfy $\overline{x} \neq \overline{y} \in G$.
\end{prop}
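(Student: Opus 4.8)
The plan is to show directly that \emph{every} $w$-geodesic representative $y$ of $\overline{x}$ must equal $x$ as a word in $F(A)$; this simultaneously yields that $x$ is itself geodesic (a minimal-weight representative exists because $w_{\min}>0$ forces each weighted ball to be finite) and that it is the unique one. The final sentence is then immediate: if $x,y\in\mathcal N(\lambda)$ with $\overline{x}=\overline{y}$, then $x$ and $y$ are both \emph{the} unique $w$-geodesic of this common element, so $x=y$. To run the argument, suppose toward a contradiction that $y$ is a $w$-geodesic with $\overline{y}=\overline{x}$ but $y\neq x$. Since $x$ and $y$ are reduced, the reduced form $v$ of $xy^{-1}$ is nontrivial and lies in $\langle\langle\mathcal R\rangle\rangle$, so Greendlinger's Lemma (applicable since $\lambda=\tfrac1{16}\le\tfrac16$) supplies $r\in\mathcal R$ and a common subword $u$ of $v$ and $r$ with $|u|>(1-3\lambda)|r|$.

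Next I would do the length bookkeeping. Writing $x=x_1t$, $y=y_1t$ with $t$ the maximal common suffix, reduction gives $v=x_1y_1^{-1}$, and $u$ splits as $u=u_xu_y$ with $u_x$ a suffix of $x_1$ and $u_y$ a prefix of $y_1^{-1}$ (either piece possibly empty). The piece $u_x$ is a common subword of $x$ and $r$, so $x\in\mathcal N(\lambda)$ forces $|u_x|<\lceil\lambda|r|\rceil\le\lambda|r|+1$; hence $|u_y|>(1-3\lambda)|r|-\lambda|r|-1=(1-4\lambda)|r|-1$, which in particular is positive, so $u$ genuinely extends into the $y$-part. Setting $z=u_y^{-1}$, this $z$ is a genuine subword of $y$ (a suffix of $y_1$), while $u_y$ is a subword of $r$, so $z$ is a subword of $r^{-1}$. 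Using that $\mathcal R$ is symmetrized, I can choose a cyclic permutation $r'\in\mathcal R$ of $r^{-1}$ with $r'=zz_1$ and $|z_1|=|r'|-|z|<4\lambda|r'|+1$, whence $|z_1|\le\lceil 4\lambda|r'|\rceil$ since $|z_1|$ is an integer.

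The crux, and the genuinely new input beyond Shukhov's unweighted argument, is the second even-distribution condition. Since $\overline{r'}=e$ we have $\overline{z}=\overline{z_1^{-1}}$, so replacing the occurrence of $z$ inside $y$ by $z_1^{-1}$ produces a word $y'$ with $\overline{y'}=\overline{x}$ and $|y'|_w=|y|_w-|z|_w+|z_1|_w$. It therefore suffices to show $|z_1|_w<|z|_w$ for \emph{every} symmetric weight $w$. Extending the suffix $z_1$ of $r'$ to a subword $u'$ of length exactly $\lceil 4\lambda|r'|\rceil$ and applying condition (ii) to $r'$, I get $\#_{a^\pm}(z_1)\le\#_{a^\pm}(u')<\tfrac12\#_{a^\pm}(r')$ for every $a\in A$; since the letters of $r'=zz_1$ are partitioned between $z$ and $z_1$, this gives $\#_{a^\pm}(z)=\#_{a^\pm}(r')-\#_{a^\pm}(z_1)>\#_{a^\pm}(z_1)$ for every generator occurring in $r'$. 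Because $w$ is symmetric, $|z|_w-|z_1|_w=\sum_{a\in A}\big(\#_{a^\pm}(z)-\#_{a^\pm}(z_1)\big)w(a)>0$, as at least one generator occurs in the nonempty word $r'$ and all weights are positive. Hence $|y'|_w<|y|_w$, contradicting the geodesy of $y$, so $y=x$.

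I expect the main obstacle to be precisely this last step: in the weighted setting a long (in letters) subword of a relator can a priori carry arbitrarily small weight, so the naive Shukhov replacement need not shorten $y$. Condition (ii) is exactly what rules this out, forcing the short complement $z_1$ to miss more than half of \emph{each} generator's occurrences and thus to be strictly lighter than $z$ regardless of $w$. The remaining care is routine bookkeeping: tracking orientations through the inversion $z=u_y^{-1}$, handling the ceilings so that $|z_1|\le\lceil4\lambda|r'|\rceil$ lets condition (ii) apply, and checking that for $\lambda=\tfrac1{16}$ one has $(1-3\lambda)|r|\ge\lceil\lambda|r|\rceil$, which is what prevents $u$ from lying entirely within the $x$-part of $v$.
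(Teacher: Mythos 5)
Your proposal is correct and follows essentially the same route as the paper: Greendlinger's Lemma applied to the reduced form of $xy^{-1}$, the $\mathcal{N}(\lambda)$ condition to bound the portion of the relator lying in $x$, the second even-distribution condition to show the short complementary piece of the relator is strictly lighter for every weight, and the resulting replacement to contradict minimality of $y$. Your framing (showing every $w$-geodesic representative equals $x$, with existence from finiteness of weighted balls) and your explicit handling of the ceilings are slightly more careful than the paper's writeup, but the argument is the same.
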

\begin{proof}
    Suppose, for a contradiction, that there exists $y \in F(A)$ such that $x \neq y$, $\overline{x} = \overline{y}$, and $|x|_{w} > |y|_{w}$. Without loss of generality, we may assume that $y$ has shortest $w$-length among all such words.
    
    Write $x = x_{1}v$, $y = y_{1}v$ (where $v$ may be the empty word), so that $x_{1}y_{1}^{-1}$ is reduced. Then $\overline{x_{1}y_{1}^{-1}} = \overline{xy^{-1}} = e$, hence by Greendlinger's lemma there exists $r \in \mathcal{R}$ and a common subword $u$ of $x_{1}y_{1}^{-1}$ and $r$ satisfying 
    $|u| > (1-3\lambda)|r|.$
    
    Now write $u$ as a reduced concatenation $u_{1}u_{2}$, where $u_{1}$ is a subword of $x_{1}$ and $u_{2}$ is a subword of $y_{1}^{-1}$. Since $x \in \mathcal{N}(\lambda)$ and $u_{1}$ is a common subword of $x$ and $r$, we have $|u_{1}| \leq \lambda|r|$. It follows that 
    $$|u_{2}|  = |u| - |u_{1}| > (1-4\lambda)|r|.$$

    Let $u_{3}$ be the unique word so that $u_{2}u_{3}$ is reduced, and is a permutation of $r$. We have $|u_{3}| \leq 4\lambda|r|$. By the second even distribution condition for translation-apparent presentations, we conclude that 
    $$\#_{a^{\pm}}(u_{3}) < \frac{1}{2}\#_{a^{\pm}}(r)$$ for any $a \in A$. Since $\#_{a^{\pm}}(u_{2}) + \#_{a^{\pm}}(u_{3}) = \#_{a^{\pm}}(r)$, we conclude that 
    $$\#_{a^{\pm}}(u_{2}) > \#_{a^{\pm}}(u_{3})$$ for each $a \in A$. Therefore $|u_{2}|_{w} > |u_{3}|_{w} = |u_{3}^{-1}|_{w}$. Replacing the subword $u_{2}^{-1}$ of $y$ with $u_{3}$ (and reducing if necessary), we obtain a new word $y' \in F(A)$ with $\overline{y'} = \overline{y}$ and $|y'|_{w} < |y|_{w}$, contradiction.
\end{proof}
We immediately have the following corollary.
\begin{cor}\label{cor: translationapparentlengths}
    Let $G = \langle A | \mathcal{R}\rangle$ be a $\lambda$-translation-apparent presentation for the marked group $(G, S)$. Then for any weight $w$ on $S$ and any $s \in S$, we have $\ell_{d_{w}}(s) = w(s).$ 
\end{cor}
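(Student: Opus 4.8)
The plan is to deduce this statement directly from Proposition \ref{prop: stronggeodesics}, once I verify that every power $s^n$ lies in $\mathcal{N}(\lambda)$. Recall that the stable translation length is $\ell_{d_w}(s) = \lim_{n\to\infty} d_w(e, \overline{s^n})/n$, so it suffices to compute $d_w(e, \overline{s^n})$ exactly for each $n$ and then pass to the limit.

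First I would check the containment $s^n \in \mathcal{N}(\lambda)$ for all $n \geq 1$. The only nonempty subwords of $s^n$ are the powers $s^k$ with $1 \leq k \leq n$; in particular $s^n$ is reduced, since $s \neq s^{-1}$ in a torsion-free group. Thus any common subword $u$ of $s^n$ and a relator $r \in \mathcal{R}$ is itself a power of $s$ occurring inside $r$. The first even distribution condition (Definition \ref{def: translationapparentpresentation}(3)(i)) bounds the length of any such power by $|u| < \lambda|r| \leq \lceil \lambda|r|\rceil$, which is exactly the defining inequality for membership in $\mathcal{N}(\lambda)$. Hence $s^n \in \mathcal{N}(\lambda)$ for every $n$.

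Having established this, Proposition \ref{prop: stronggeodesics} tells me that $s^n$ is the unique $w$-geodesic representative of $\overline{s^n}$, so $d_w(e, \overline{s^n}) = |s^n|_w = n\, w(s)$. Substituting into the definition of translation length gives
$$\ell_{d_w}(s) = \lim_{n\to\infty} \frac{n\, w(s)}{n} = w(s),$$
as claimed.

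The argument has no real obstacle of its own: all the difficulty has been front-loaded into Proposition \ref{prop: stronggeodesics}, whose proof combines Greendlinger's lemma with the second even distribution condition to rule out shorter representatives. The single point worth emphasizing is the role of condition (i): it is precisely what guarantees $s^n \in \mathcal{N}(\lambda)$, and without it a relator could contain a long run of a single generator $s$, in which case $s^n$ might fail to be a $w$-geodesic whenever $w(s)$ is large (a rearrangement through the relator could be cheaper). So the crux is simply recognizing that condition (i) is exactly the hypothesis making powers of generators $\lambda$-reduced, after which the corollary follows formally.
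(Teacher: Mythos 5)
Your proposal is correct and follows the same route as the paper: verify $s^n\in\mathcal{N}(\lambda)$ via the first even distribution condition, invoke Proposition \ref{prop: stronggeodesics} to get $d_w(e,\overline{s^n})=nw(s)$, and pass to the limit. (The aside that $s^n$ is reduced "since $s\neq s^{-1}$ in a torsion-free group" is unnecessary --- $s$ and $s^{-1}$ are distinct letters of $S=A\cup A^{-1}$ by construction --- but this does not affect the argument.)
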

\begin{proof}
   Let $w$ be an arbitrary weight. By the first even distribution condition, any word of the form $s^{n}$, $s \in S$, is in $\mathcal{N}(\lambda)$. By Proposition \ref{prop: stronggeodesics}, $s^{n}$ is a $w$-geodesic representative for $\overline{s^{n}}$, and so $$d_{w}(e, \overline{s^{n}}) = w(s^{n}) = nw(s).$$ The claim follows from the definition of $\ell_{d_{w}}$.
\end{proof}

This in turn implies torsion-free, hyperbolic groups with translation-apparent presentations have unique normalized minimizers, which is Theorem \ref{thm: mainthm1} in the introduction.

\begin{thm}[Theorem \ref{thm: mainthm1}]\label{thm: translationapparentuniqueness}
    Let $(G, S)$ be a torsion-free, non-elementary hyperbolic group admitting a $\lambda$-translation-apparent presentation $\langle A | \mathcal{R} \rangle$. Then the volume entropy is a strictly convex function on the set of normalized weights, and in particular there exists a unique normalized weight minimizing the volume entropy.
\end{thm}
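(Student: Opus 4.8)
The plan is to deduce strict convexity directly from the translation-length identity in Corollary \ref{cor: translationapparentlengths}, combined with the rough-convexity dichotomy in Corollary \ref{cor: roughconvexity} and the length-spectrum rigidity in Theorem \ref{thm: translationisometric}. Since a strictly convex function on the convex set $\mathcal{W}$ has at most one minimizer, and existence is guaranteed by Proposition \ref{prop: existence}, uniqueness of the minimizing weight will follow immediately once strict convexity is established.

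To prove strict convexity, I would fix distinct weights $w_{0}, w_{1} \in \mathcal{W}$ and $t \in (0,1)$, and argue by contradiction. Corollary \ref{cor: roughconvexity} already supplies the non-strict inequality $h(w_{t}) \leq (1-t)h(w_{0}) + th(w_{1})$, where $w_{t} = (1-t)w_{0} + tw_{1}$. Suppose equality held. Then the equality clause of Corollary \ref{cor: roughconvexity} forces the induced metrics $d_{w_{0}}$ and $d_{w_{1}}$ to be roughly isometric. By Theorem \ref{thm: translationisometric}, this means $\ell_{d_{w_{0}}}(x) = \ell_{d_{w_{1}}}(x)$ for every $x \in G$; in particular the equality holds for each generator $s \in S$.

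The key step is now to read off a contradiction from Corollary \ref{cor: translationapparentlengths}, which asserts that $\ell_{d_{w_{0}}}(s) = w_{0}(s)$ and $\ell_{d_{w_{1}}}(s) = w_{1}(s)$ for all $s \in S$. Combining with the previous line gives $w_{0}(s) = w_{1}(s)$ for every $s \in S$, i.e. $w_{0} = w_{1}$, contradicting the assumption that the weights were distinct. Hence the inequality of Corollary \ref{cor: roughconvexity} must be strict whenever $w_{0} \neq w_{1}$ and $t \in (0,1)$, which is precisely the strict convexity of $w \mapsto h(w)$ on $\mathcal{W}$.

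The genuine work here lies not in this final assembly but in Proposition \ref{prop: stronggeodesics} and its corollary, where the even-distribution conditions were exploited to show that the powers $s^{n}$ are $w$-geodesics, and hence that the translation lengths of generators coincide with their weights. Given that machinery, the only thing left to observe is that the generators alone suffice to distinguish distinct normalized weights, which is immediate since a weight is determined by its values on $S$. I therefore expect no substantive obstacle to remain: the theorem is essentially a formal consequence of the translation-length identity together with the Cantrell-Tanaka rigidity results, and the role of the translation-apparent hypothesis is entirely encoded in Corollary \ref{cor: translationapparentlengths}.
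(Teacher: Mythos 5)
Your proposal is correct and follows essentially the same route as the paper: both deduce strict convexity (and hence uniqueness, given existence from Proposition \ref{prop: existence}) by showing that distinct normalized weights induce metrics that are not roughly isometric, via the translation-length identity of Corollary \ref{cor: translationapparentlengths} combined with Theorem \ref{thm: translationisometric} and the equality clause of Corollary \ref{cor: roughconvexity}. The only cosmetic difference is that you phrase the final step as a contradiction while the paper states it directly.
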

\begin{proof}
    Corollary \ref{cor: roughconvexity} and Corollary \ref{cor: roughuniqueness} respectively tell us that the volume entropy is strictly convex up to rough isometry, and that there is a unique minimizing normalized weight up to rough isometry. So it suffices to show that any distinct normalized weights $w_{0}, w_{1}$ on $S$ induce metrics $d_{w_{0}}, d_{w_{1}}$ which are not roughly isometric.

    Since $w_{0} \neq w_{1}$, there exists $s \in S$ such that $w_{0}(s) \neq w_{1}(s)$. By Corollary \ref{cor: translationapparentlengths}, we have 
    $$\ell_{d_{w_{0}}}(s) = w_{0}(s) \neq w_{1}(s) = \ell_{d_{w_{1}}}(s).$$ By Theorem \ref{thm: translationisometric}, $d_{w_{0}}$ and $d_{w_{1}}$ cannot be roughly isometric.
\end{proof}

\subsection{Bounding Entropy} \label{sec: counting}

In this subsection we give lower bounds for the entropy of $(G, S, w)$ when $(G, S)$ admits a $\lambda$-translation-apparent presentation. The key ingredient is a system of generating functions developed in \cite{myers}, which we use to count weighted $\lambda$-reduced words. Unlike before, we will initially assume that our fixed weight $w$ takes integer values.

Let $G = \langle A | \mathcal{R}\rangle$ be a $\lambda$-translation-apparent presentation for $(G, S)$, and let $\mathcal{N}(\lambda)$ denote the set of $\lambda$-reduced words. Let $g(n)$ denote the number of words in $\mathcal{N}(\lambda)$ with total weight $ \leq n$. By Proposition \ref{prop: stronggeodesics}, the evaluation map $\mathcal{N}(\lambda) \to G$ is injective, and moreover $|x|_{w} = d(e, \overline{x})$ for any $x \in \mathcal{N}(\lambda)$. Therefore, $g(n) \leq |B_{d_{w}}(n)|$, and 
$$\lim_{n \to \infty} \frac{\ln g(n)}{n} \leq h(G, w).$$ 

The existence of the limit $\lim_{n \to \infty} \frac{\ln g(n)}{n}$, as well as the following lemma, follow from the main result of \cite{myers}. For the sake of readability, we leave the proof of the next lemma to the appendix. 

\begin{lem}\label{lem: generatinginequality}
    Let $M = \lim_{n \to \infty} \sqrt[n]{g(n)}$, so that $\ln M = \lim_{n \to \infty} \frac{\ln g(n)}{n}$. Let $$\mathcal{R}_{\lambda, s} = \{u |  u \text{ is a subword of } r \in \mathcal{R} \text{ ending with } s, |u| = \lceil \lambda |r| \rceil\}.$$  Then any real root of
    $$p(z) \coloneqq \left( 1 - \sum_{s \in S}(z^{w(s)}+1)^{-1}\right) + \left(\sum_{s \in S} \frac{z^{w(s)}}{z^{w(s)}+1}\sum_{u \in \mathcal{R}_{\lambda, s}}\frac{1}{z^{|u|_{w}}}\right)$$ 
    is $\leq M$.
\end{lem}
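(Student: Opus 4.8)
The plan is to recognize $\mathcal{N}(\lambda)$ as a set of reduced words avoiding a finite list of forbidden factors, to identify $p(z)$ with the reciprocal of the associated weighted generating function via the main theorem of \cite{myers}, and then to extract the root bound from elementary positivity. First I would rephrase the defining condition: a reduced word $x$ lies in $\mathcal{N}(\lambda)$ if and only if it contains no factor from the finite set $\mathcal{B} := \bigcup_{s \in S}\mathcal{R}_{\lambda, s}$ of subwords of relators of length $\lceil \lambda |r| \rceil$, since any common subword of $x$ with some $r$ of length $\geq \lceil \lambda |r|\rceil$ contains such a factor and conversely. Thus counting $\mathcal{N}(\lambda)$ is exactly a weighted subword-avoidance problem over the alphabet $S$, subject also to the reduced-word constraint forbidding the factors $ss^{-1}$. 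I would package the count into the generating function $\Phi(z) := \sum_{x \in \mathcal{N}(\lambda)} z^{-|x|_{w}}$, in which the empty word contributes the constant $1$. Writing $a(k) = \#\{x \in \mathcal{N}(\lambda) : |x|_{w} = k\}$, so that $g(n) = \sum_{k \leq n} a(k)$, the sequences $a$ and $g$ share the same exponential growth rate; hence $\Phi(z)$ converges for real $z > M$ and diverges for $0 < z < M$, and $z = M$ is its dominant (smallest-modulus, positive-real) singularity.

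Next I would invoke the main result of \cite{myers} to obtain the identity $\Phi(z) = 1/p(z)$. Here the per-letter factors $\tfrac{1}{z^{w(s)}+1}$ record the non-backtracking structure of reduced words (the $+1$ accounting for the forbidden cancellation $ss^{-1}$), while the term $\sum_{s \in S} \tfrac{z^{w(s)}}{z^{w(s)}+1}\sum_{u \in \mathcal{R}_{\lambda, s}} z^{-|u|_{w}}$ is the correction that removes the avoided factors, organized by their terminal letter $s$. As a consistency check, when $\mathcal{R} = \emptyset$ the formula degenerates to $p(z) = 1 - \sum_{s \in S}(z^{w(s)}+1)^{-1}$ with $\Phi = 1/p$, recovering the standard count of reduced words; in the unweighted case its unique root is $z = 2m-1 = M$. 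The place where the $C'(\lambda)$ hypothesis enters is in showing that the correction is exactly this first-order sum, i.e.\ that no higher-order cluster terms survive: overlaps between the forbidden factors are pieces, which small cancellation forces to have length $< \lambda|r|$, and this is what allows Myers' formula to collapse to the stated expression for $p$.

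The root bound is then a positivity argument. For real $z > M$ the series $\Phi(z)$ is a convergent sum of strictly positive terms, hence finite and positive, so $p(z) = 1/\Phi(z)$ is finite and strictly positive; in particular $p(z) \neq 0$. Therefore $p$ has no real root in $(M, \infty)$, and since any negative or in-range root is automatically $\leq M$, every real root of $p$ is at most $M$, as claimed. That $M$ is itself a root — so that the largest real root yields the sharp lower bound $\ln M \leq h(G, w)$ used afterward — follows from Pringsheim's theorem, since the nonnegativity of the coefficients $a(k)$ forces the dominant singularity of $\Phi = 1/p$ at $z = M$ to be a genuine pole, i.e.\ a zero of $p$.

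I expect the main obstacle to be the middle step: matching the general formula of \cite{myers} to the precise expression for $p(z)$ in our setting. Concretely, one must verify that the reduced-word (free-group) constraint is incorporated correctly and, most delicately, that the small cancellation condition genuinely annihilates all higher cluster contributions, so that only the linear sum over $\mathcal{R}_{\lambda, s}$ remains.
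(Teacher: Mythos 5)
Your setup (recasting $\mathcal{N}(\lambda)$ as weighted subword avoidance over $S$ with forbidden factors $ss^{-1}$ and the length-$\lceil\lambda|r|\rceil$ subwords of relators, then feeding this into Myers' machinery) matches the paper. The gap is in the middle step: the identity $\Phi(z)=1/p(z)$ is false in general, and your proposed justification --- that $C'(\lambda)$ annihilates all higher-order cluster terms --- does not work. Small cancellation forces overlaps between distinct forbidden factors to be \emph{short} (pieces of length $<\lambda|r|$), but not \emph{empty}: a nontrivial overlap of positive weight contributes a nonzero term to the weighted correlation polynomial $w(U_r,U_{r'})_z$, and likewise the self-correlations $w(U_r,U_r)_z$ and the correlations $w(A_i^{\pm},U_r)_z$ survive. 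Equivalently, $F_{U_r}(z)=z^{-|U_r|_w}F(z)$ fails because appending $U_r$ to a word avoiding all forbidden factors can create a forbidden factor straddling the junction; so solving Myers' full linear system for $F$ does not collapse to $1/p$. (Your consistency check with $\mathcal{R}=\emptyset$ succeeds only because the correlation structure of the pairs $a_ia_i^{-1}$, $a_i^{-1}a_i$ happens to be handled exactly.)

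The paper's fix is to use only a one-sided version of your identity. It takes from Myers' system just the first equation and the rows indexed by the two-letter words $A_i^{\pm}$, eliminates $F_{A_i^{\pm}}$ exactly to obtain
\[
\Bigl(1-\sum_{s\in S}(z^{w(s)}+1)^{-1}\Bigr)F(z)+\sum_{i}\frac{z^{w_i}}{z^{w_i}+1}\sum_{U_r\in\mathcal{U}_{i^{\pm}}}F_{U_r}(z)=1,
\]
and then replaces each $F_{U_r}(z)$ by the termwise upper bound $z^{-|U_r|_w}F(z)$ (valid since $f_{U_r}(n)\le f(n-|U_r|_w)$), which only increases the positive second term and yields $p(z)F(z)\ge 1$ for real $z$ in the domain of convergence. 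Your final positivity argument then goes through verbatim with this inequality in place of the equality: a real root $z_0>M$ would give $0=p(z_0)F(z_0)\ge 1$. Your closing claim that $M$ is itself a root of $p$ (via Pringsheim) is not needed for the lemma and does not follow once $\Phi\ne 1/p$; the paper establishes the existence of a root near $M_0$ separately, by a sign-change argument in Proposition 3.12.
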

\begin{proof}
    See Appendix \ref{sec: appendix}.
\end{proof}

Now consider the free group $F_{m} = F(A)$, where $m = |A|$. We can equip $(F_{m}, S)$ with the same weight $w$ as $(G, S)$. 
\begin{lem}\label{lem: freegroupgenerating}
   Define $p_{0}(z) \coloneqq 1 - \sum_{s \in S}(z^{w(s)}+1)^{-1}$. Then $M_{0} \coloneqq e^{h(F_{m}, w)}$ is a root of $p_{0}$.
\end{lem}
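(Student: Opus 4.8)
The plan is to observe that the equation $p_0(z) = 0$ is, after the substitution $z = e^{h}$, nothing more than the implicit equation for the volume entropy of a weighted free group supplied by Lemma \ref{lem: balacheffmerlin}. Since the weight is integer-valued, $p_0$ is a rational function of $z$, and $p_0(M_0) = 0$ is equivalent to the identity
$$\sum_{s \in S} \frac{1}{M_0^{w(s)} + 1} = 1.$$
Thus the entire content of the lemma reduces to verifying this single identity at $M_0 = e^{h(F_m, w)}$.

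To carry this out I would first rewrite $M_0^{w(s)} = e^{w(s)\, h(F_m, w)}$, so that each summand takes the form $\left(e^{w(s)\, h(F_m,w)} + 1\right)^{-1}$, matching exactly the shape of the terms appearing in Lemma \ref{lem: balacheffmerlin}. The one point requiring care is the indexing: Lemma \ref{lem: balacheffmerlin} sums over the $m$ generators $a_1, \dots, a_m$ and evaluates to $\tfrac{1}{2}$, whereas here the sum ranges over all of $S = A \cup A^{-1}$, which has $|S| = 2m$ elements. Using the symmetry $w(s) = w(s^{-1})$ to pair each generator with its inverse, I would split the sum over $S$ as twice the sum over $A$, so that Lemma \ref{lem: balacheffmerlin} (applied with $k = m$ and $h = h(F_m, w)$) yields
$$\sum_{s \in S}\frac{1}{e^{w(s)\, h(F_m,w)} + 1} = 2 \cdot \frac{1}{2} = 1.$$
Substituting back gives $p_0(M_0) = 1 - 1 = 0$, as desired.

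There is no genuine obstacle here beyond keeping track of this factor of two; the result is a direct translation of the known Balacheff–Merlin formula for $h(F_m, w)$ into the root-finding language used for the generating function $p$ in Lemma \ref{lem: generatinginequality}. The reason for recording it separately is that $p_0$ is precisely the free-group specialization of $p$ (obtained by discarding the relator-dependent second sum), so that $M_0 = e^{h(F_m, w)}$ serves as the comparison point against which the full polynomial $p$, and hence the entropy $h(G, w)$, will later be measured.
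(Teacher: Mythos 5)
Your proposal is correct and follows exactly the paper's own argument: the paper likewise applies Lemma \ref{lem: balacheffmerlin} and uses the symmetry $w(s) = w(s^{-1})$ to convert the sum over $A$ (which equals $\tfrac{1}{2}$) into the sum over $S$ (which equals $1$), giving $p_{0}(M_{0}) = 0$. The bookkeeping of the factor of two that you flag is indeed the only content of the step.
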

\begin{proof}
        By Lemma \ref{lem: balacheffmerlin}, $h(F_{m}, w)$ satisfies 
    $$1 - \sum_{i=1}^{m}2(e^{w(a_{i})h(F_{m}, w)}+1)^{-1} = 0.$$ This shows that $M_{0}$ is a root of $p_{0}(z)$.
\end{proof}

Putting the above lemmas together, we derive the desired lower bound on $M$. Note that the quantities $m, j, \ell$ are independent of the weight $w$.
\begin{prop}\label{prop: integerreducedinequality}
    Let $j = \max_{s \in S} \left|\mathcal{R}_{\lambda, s}\right|$ and let $l$ denote the shortest unweighted length of a word in $\bigcup_{s \in S} \mathcal{R}_{\lambda, s}$.Suppose that $l > 32m$ and 
    $$8mjl < (2m-1)^{\frac{l}{16m} -2}.$$ Then 
     $$M_{0} - \frac{1}{Nl} <   M \leq M_{0},$$ where $N = \sum_{s \in S} w(s)$.
\end{prop}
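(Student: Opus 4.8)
The plan is to prove the two inequalities $M \le M_0$ and $M > M_0 - \frac{1}{Nl}$ separately, the first being routine and the second carrying all the weight. For the upper bound I would observe that every word counted by $g(n)$ is $\lambda$-reduced, hence reduced, hence an element of $F(A) = F_m$ whose $d_w$-distance from $e$ (computed inside $F_m$) equals its total weight; therefore $g(n) \le |B_{d_w}^{F_m}(n)|$, and taking $n$th roots as $n \to \infty$ gives $M \le e^{h(F_m, w)} = M_0$ by Lemma \ref{lem: freegroupgenerating}.

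For the lower bound I would invoke Lemma \ref{lem: generatinginequality}: every real root of $p$ is $\le M$. Writing $p = p_0 + q$ with $q(z) = \sum_{s \in S}\frac{z^{w(s)}}{z^{w(s)}+1}\sum_{u \in \mathcal{R}_{\lambda,s}}z^{-|u|_w} \ge 0$ and $p_0(z) = 1 - F(z)$, where $F(z) = \sum_{s \in S}(z^{w(s)}+1)^{-1}$, Lemma \ref{lem: freegroupgenerating} gives $F(M_0) = 1$ and hence $p(M_0) = q(M_0) > 0$. So it suffices to show $p(z_0) < 0$ at $z_0 \coloneqq M_0 - \frac{1}{Nl}$: the intermediate value theorem then produces a root $z_* \in (z_0, M_0)$, and Lemma \ref{lem: generatinginequality} yields $M \ge z_* > M_0 - \frac{1}{Nl}$. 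Since $F(M_0) = 1$, the inequality $p(z_0) < 0$ is exactly
$$F(z_0) - F(M_0) > q(z_0).$$

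I would estimate the two sides as follows. For the left side, the tangent-line inequality for the convex functions $x \mapsto x^{w(s)}$, together with the fact that $\frac{1}{Nl}$ is small relative to $M_0$ (so that $z_0^{w(s)-1} \ge \tfrac12 M_0^{w(s)-1}$), gives $F(z_0) - F(M_0) \ge \frac{1}{2Nl}\bigl(-F'(M_0)\bigr) = \frac{1}{2Nl M_0}\sum_{s \in S} w(s)\,t_s(1-t_s)$, where $t_s \coloneqq (M_0^{w(s)}+1)^{-1}$ and $\sum_s t_s = 1$. For the right side, the third even-distribution condition forces each $u \in \mathcal{R}_{\lambda,s}$ to satisfy $|u|_w > \frac{N|u|}{16m} \ge \frac{Nl}{16m}$ (each generator occupies at least a $\frac{1}{8m}$-fraction of $u$ and $\sum_{a \in A} w(a) = N/2$), whence $q(z_0) < 2mj\,M_0^{-Nl/(16m)}$ up to a harmless constant from replacing $z_0$ by $M_0$. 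Thus the target reduces to an inequality of the shape $\bigl(\sum_s w(s)\,t_s(1-t_s)\bigr) M_0^{Nl/(16m)} \gtrsim m^2 jNl$.

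The hard part will be this last estimate, because its two factors pull in opposite directions across the extreme weight regimes: when many generators carry the minimal weight, $\sum_s w(s)\,t_s(1-t_s) \sim N$ is large while $M_0^{Nl/(16m)}$ is only moderate, whereas when a single generator carries almost all the weight this sum is merely $O(1)$ but $M_0^{Nl/(16m)}$ blows up. I would treat both cases uniformly through the single lower bound $\sum_s w(s)\,t_s(1-t_s) \ge \tfrac12 N t_{\min}$, with $t_{\min} = (M_0^{w_{\max}}+1)^{-1}$, combined with the root relation $\sum_s t_s = 1$, which yields $(2m-1)^{1/w_{\max}} \le M_0 \le 2m-1$. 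Substituting $N \ge 2w_{\max}$ and $l > 32m$ collapses the exponents, and feeding in the hypothesis $8mjl < (2m-1)^{l/(16m)-2}$ reduces everything to the clean inequality $(2m-1)^{l/(16m)+1} > 4.4m$, which holds for all $m \ge 2$ because $l/(16m) > 2$ forces the exponent above $3$. This establishes $p(z_0) < 0$ and hence the lower bound; keeping the constants aligned through the two regimes is the only genuinely delicate point.
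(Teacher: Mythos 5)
Your proposal follows essentially the same route as the paper: both locate a real root of $p = p_0 + q$ in the interval $\left(M_0 - \frac{1}{Nl}, M_0\right)$ by pairing a derivative estimate for $p_0$ near $M_0$ (your tangent-line bound is the paper's mean value theorem step in different clothing, and your $\sum_s t_s = 1$ bound on $M_0$ replaces the paper's appeal to Lim's minimality of the uniform weight) with an upper bound on $q$ coming from the third even-distribution condition, and then invoke Lemma \ref{lem: generatinginequality}. The one substantive point of divergence is a constant: from $\#_{a^{\pm}}(u) > \frac{1}{8m}|u|$ summed over $a \in A$ (so against $\sum_{a \in A} w(a) = N/2$) you correctly get $|u|_w > \frac{N|u|}{16m}$, whereas the paper's displayed computation sums $w(s)$ over all of $S$ and records $\frac{Nl}{8m}$; this discrepancy only shifts harmless constants, and your final verification still closes under the stated hypothesis with room to spare.
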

\begin{proof}
  The inequality $M \leq M_{0}$ is clear. For the other direction, we show that $p$ has a real root in the interval $\left(M_{0} - \frac{1}{Nl}, M_{0}\right)$, so that the desired inequality follows from Lemma \ref{lem: generatinginequality}.

  First, we establish the inequality $M_{0} - \frac{1}{Nl} > \frac{M_{0}+1}{2} > 1$. Note that 
  \begin{align*}
  M_{0}^{N} = e^{Nh(F_{m}, w)} = e^{h(F_{m},\frac{1}{N}w)} &\geq e^{h(F_{m}, w_{*})} \\ &= 2m-1 > e,
  \end{align*}where $w_{*}$ is the uniform normalized weight on $(F_{m}, S)$. The first inequality above follows from \cite{lim}, which states that the uniform normalized weight is the unique normalized minimizing weight on $(F_{m}, S)$. 
  It follows that
  $M_{0} -1 > e^{\frac{1}{N}} -1 \geq \frac{1}{N} ,$ 
  and so 
  $$M_{0} - \frac{M_{0}+1}{2} = \frac{M_{0}-1}{2} > \frac{1}{2N} \geq \frac{1}{Nl}$$
  as claimed.

  By the Mean Value Theorem, there exists $a \in \left[M_{0} - \frac{1}{Nl}, M_{0}\right]$ such that 
    $$-p_{0}\left(M_{0} - \frac{1}{Nl}\right) = p_{0}(M_{0}) - p_{0}\left(M_{0} - \frac{1}{Nl}\right)  = \frac{p'_{0}(a)}{Nl}.$$ Here we've used the fact that $p_{0}(M_{0}) = 0$ by Lemma \ref{lem: freegroupgenerating}. Since $a \geq M_{0} - \frac{1}{Nl} > 1$ and $1 \leq w(s) \leq N$ for each $s$, we have
    \begin{align*}
        p'_{0}(a) = \sum_{s \in S}\frac{w(s)a^{w(s)-1}}{(a^{w(s)}+1)^{2}} \geq \sum_{s \in S} \frac{w(s)}{(M_{0}^{N}+1)^{2}} = \frac{N}{(M_{0}^{N}+1)^{2}} \geq \frac{N}{4M_{0}^{2N}}, 
    \end{align*}
    and so 
    $$p_{0}\left(M_{0} - \frac{1}{Nl}\right) < -\frac{1}{4l M_{0}^{2N}}.$$

 Now define
    $$q(z) \coloneqq p(z) - p_{0}(z) = \sum_{s \in S} \frac{z^{w(s)}}{z^{w(s)}+1}\sum_{u \in \mathcal{R}_{\lambda, s}}\frac{1}{z^{|u|_{w}}}.$$ By the third even distribution condition, for any $s \in S$, each $u \in \mathcal{R}_{\lambda, s}$ satisfies $\#_{s}(u) > \frac{1}{8m}|u|$, and so 
    $$|u|_{w} > \sum_{s \in S}\frac{1}{8m}|u|w(s) \geq \frac{Nl}{8m}.$$ For $b > 1$, it follows that 
    \begin{align*}
        q(b) \leq \sum_{s \in S} \sum_{u \in \mathcal{R}_{\lambda, s}}\frac{1}{b^{|u|_{w}}} &\leq \sum_{s \in S} \sum_{u \in \mathcal{R}_{\lambda, s}}\frac{1}{b^{Nl / 8m}} \leq \frac{2mj}{b^{N\ell /8m}}.
    \end{align*}

    Taking $b = M_{0} - \frac{1}{Nl} > 1$, we have 
$$q(b) \leq \frac{2mj}{\left(M_{0} - \frac{1}{Nl}\right)^{Nl/8m}}.$$ We have
$\left(M_{0} - \frac{1}{Nl}\right)^{2}> \left(\frac{M_{0}+1}{2}\right)^{2} > M_{0},$ and so
\begin{align*}
    p(b) = p_{0}(b) + q(b) &< -\frac{1}{4l M_{0}^{2N}} + \frac{2mj}{M_{0}^{Nl/16m}} = \frac{8mjl - M_{0}^{N(\frac{l}{16m}-2)}}{4l M_{0}^{Nl/16m}}.
\end{align*}
Finally, by assumption we have $8mjl < (2m-1)^{\frac{l}{16m} -2} < M_{0}^{N\left(\frac{l}{16m} -2\right)},$ hence $p(b) < 0.$ On the other hand, we see that 
    $$p(M_{0}) = p_{0}(M_{0}) + q(M_{0}) = q(M_{0}) > 0.$$ We conclude that $p$ has a real root in $\left(M_{0} - \frac{1}{Nl}, M_{0}\right)$.
\end{proof}

Finally, we adapt the above proposition to the case of normalized weights.

\begin{prop}\label{prop: normalizedreducedinequality}
Let $G = \langle A | \mathcal{R}\rangle$ be a $\lambda$-translation-apparent presentation. Let $m, j, l$ be defined as above. Suppose that $l > 32m$ and $8mjl < (2m-1)^{\frac{l}{16m}-2}$. Then for any normalized weight $w$ we have 
$$h(F_{m}, w) - \frac{2}{l} \leq h(G, w) \leq h(F_{m}, w).$$
 \end{prop}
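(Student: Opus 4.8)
The plan is to deduce this from the integer-weight estimate of Proposition \ref{prop: integerreducedinequality} by a scaling argument, and then extend to all real normalized weights by continuity. The upper bound $h(G, w) \le h(F_m, w)$ is the easy half and holds for every weight: since $G$ is the quotient of $F_m = F(A)$ under the natural surjection $\pi \colon F_m \to G$, and any word representing $x \in F_m$ also represents $\pi(x)$ in $G$, we have $d_w^G(e, \pi(x)) \le d_w^{F_m}(e, x)$. Taking the $w$-geodesic representative of each $g \in B_{d_w^G}(R)$ shows $\pi$ maps $B_{d_w^{F_m}}(R)$ onto $B_{d_w^G}(R)$, so $|B_{d_w^G}(R)| \le |B_{d_w^{F_m}}(R)|$, and dividing by $R$ and letting $R \to \infty$ gives the claim.

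For the lower bound I would first treat rational normalized weights. Given such a $w$, I clear denominators: choose $q \in \mathbb{N}$ so that $w' \coloneqq q w$ takes positive integer values, whence $\sum_{s \in S} w'(s) = q$. The scaling property $h(\alpha w) = \tfrac{1}{\alpha} h(w)$ gives $h(G, w) = q\, h(G, w')$ and $h(F_m, w) = q\, h(F_m, w')$; crucially the combinatorial quantities $m, j, l$ are unchanged, since they depend only on the unweighted relators. Writing $M_0' = e^{h(F_m, w')}$ and letting $M'$ be the growth constant $\lim_n \sqrt[n]{g(n)}$ of $\lambda$-reduced words for $w'$ (so that $\ln M' \le h(G, w')$), Proposition \ref{prop: integerreducedinequality} applied to $w'$ with $N = q$ yields $M' > M_0' - \tfrac{1}{ql}$.

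The heart of the matter is then an elementary estimate showing that, after multiplication by $q$, the error term $\tfrac{1}{ql}$ contributes only $\tfrac{2}{l}$. Concretely,
\[
h(G, w) = q\, h(G, w') \ge q \ln M' > q \ln\!\left(M_0' - \tfrac{1}{ql}\right),
\]
and since $h(F_m, w) = q \ln M_0'$, it suffices to prove $q \ln\!\left(M_0' - \tfrac{1}{ql}\right) \ge q \ln M_0' - \tfrac{2}{l}$, equivalently
\[
M_0'\left(1 - e^{-2/(ql)}\right) \ge \tfrac{1}{ql}.
\]
I would establish this using $M_0' \ge 1$ together with the inequality $1 - e^{-x} \ge \tfrac{x}{2}$, valid for $x \in (0, 1]$, applied to $x = \tfrac{2}{ql} \le \tfrac{2}{l} < 1$ (which holds since $l > 32m \ge 64$). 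This yields $h(G, w) \ge h(F_m, w) - \tfrac{2}{l}$ for every rational normalized $w$.

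Finally I would pass from rational to arbitrary real normalized weights by continuity: rational symmetric normalized weights are dense in $\mathcal{W}$, both $w \mapsto h(G, w)$ and $w \mapsto h(F_m, w)$ are continuous on $\mathcal{W}$, and the constant $\tfrac{2}{l}$ is independent of $w$, so both inequalities persist in the limit. The main subtlety is precisely keeping the error term uniform in the denominator $q$: the logarithmic scaling identity is what converts the $q$-dependent error $\tfrac{1}{ql}$ into the $q$-independent bound $\tfrac{2}{l}$, and the legitimacy of the continuity step hinges on $m, j, l$ being genuinely weight-independent. Everything else is routine once Proposition \ref{prop: integerreducedinequality} is in hand.
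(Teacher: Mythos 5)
Your proposal is correct and follows essentially the same route as the paper: reduce to rational normalized weights, clear denominators to invoke Proposition \ref{prop: integerreducedinequality} with $N=q$, convert the $\frac{1}{ql}$ multiplicative error into the weight-independent additive error $\frac{2}{l}$ via an elementary logarithmic estimate (the paper uses $\ln(1-x)\geq -\frac{x}{1-x}$ where you use $1-e^{-x}\geq \frac{x}{2}$, to the same effect), and finish by continuity of $h$ in the weight. The only cosmetic difference is that the paper approximates a fixed $w$ by a rational $w'$ within $\eps/2$ and lets $\eps\to 0$, whereas you prove the bound on the dense set of rational weights and pass to the closure; these are interchangeable.
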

 \begin{proof}
     Once again the inequality $h(G, w) \leq h(F_{m}, w)$ is clear. To derive the other inequality, fix an arbitrary $\eps > 0$, and choose a normalized weight $w'$ so that $w'(s) \in \mathbb{Q}$ for each $s$ and $|h(G, w) - h(G, w')|, |h(F_{m}, w) - h(F_{m}, w')| < \frac{\eps}{2}.$ This is possible by continuity of $h(G, -)$ and $h(F_{m}, -)$.

     Now, let $N \in \mathbb{N}$ be so that $Nw'(s) \in \mathbb{N}$ for each $s$, and let $w'' = Nw'$. Set $M = \lim_{n \to \infty}\sqrt[n]{g(n)}$, where $g(n)$ is the number of $\lambda$-reduced words of $w''$-length $\leq n$, and $M_{0} = e^{h(F_{m}, w'')}$. Since $w''$ is an integer-valued weight, we may apply Proposition \ref{prop: integerreducedinequality} to conclude that $M_{0} - \frac{1}{Nl} < M$. It follows that 
     $$\left(1- \frac{1}{Nl}\right)e^{h(F_{m}, w'')} \leq e^{h(F_{m}, w'')} - \frac{1}{Nl} < M \leq e^{h(G, w'')},$$ so taking natural logs gives us 
     $$h(F_{m}, w'') + \ln \left(1- \frac{1}{Nl}\right) < h(G, w'').$$ Multiplying both sides by $N$ and using the fact that $w'' = Nw'$, we have 
     $$h(F_{m}, w') + N\ln \left(1- \frac{1}{N\ell}\right) < h(G, w').$$ Using the inequality $\ln(1-x) \geq -\frac{x}{1-x}$ for $x < 1$, we conclude that 
     $$h(F_{m}, w')  - \frac{2}{l} \leq  h(F_{m}, w)  - N \frac{1/Nl}{1 - 1/Nl} < h(G, w').$$ Finally, by our choice of $w'$, we conclude that 
     $$h(F_{m}, w) - \eps - \frac{2}{l} <  h(G, w).$$ Since $\eps > 0$ was arbitrary, we have our desired result.
 \end{proof}

\section{Random Groups} \label{sec: randomgroups}
In this section we prove our main theorems \ref{thm: mainthm1},  \ref{thm: mainthm2},  \ref{thm: mainthm3}. To do so, we will show that a generic random group on $m$ letters under a certain density (dependent on $m$) satisfies the $\lambda$-translation-apparent condition for some $\lambda$, at which point the desired statements follow from the results of the previous section.

All the conditions, except for the even distribution condition for a translation-apparent presentation, are standard facts about random groups. To address the even distribution condition, we will use a Chernoff-type bound for Markov chains.

Fix $m \geq 2$. Consider the Markov chain $(X_{i})_{i \geq 0}$ with states $$\{a_{1}, \ldots, a_{m}, a_{1}^{-1}, \ldots, a_{m}^{-1}\},$$ initial distribution $\pi = \left(\frac{1}{2m}, \ldots, \frac{1}{2m}\right)$, and transition matrix $M = (M_{xy})$, where 
$$M_{xy} = \begin{cases} 0, \;& y = x^{-1}\\ \frac{1}{2m-1}, \; &\text{otherwise.}\end{cases}$$ Note that this Markov chain is irreducible, reversible, and has stationary distribution equal to the initial distribution. Moreover, we see that the distribution of the $\ell$-step Markov chain $(X_{0}, \ldots, X_{\ell-1})$ corresponds exactly to the uniform distribution on reduced words of length $\ell$ in $F(A)$ under the natural identification.

The following Chernoff-type bound is essential for our argument. We modify the statement in \cite{lezaud} slightly to better suit our purposes.
\begin{thm}[{\cite[Theorem 1.1, Remark 3]{lezaud}}]\label{thm: chernoffbound}
    Let $M$ be an irreducible and reversible Markov chain on a finite set $V$ with stationary distribution $\pi$. Let $f \colon V \to \RR$ satisfy $\sum_{x\in V}\pi(x)f(x) = c$ and $\norm{f-c}_{\infty} \leq 1$. Then with $\pi$ as the initial distribution, any integer $n > 0$, and any real $0 \leq \delta \leq 1$, we have 
    $$\mathbb{P}\left[\left|c - \frac{1}{n}\sum_{j=1}^{n} f(X_{i})\right| \geq \delta \right]\leq 2e^{-\frac{\eps(M)\delta^{2}n}{12}+1}.$$ Here
    $\eps(M) = 1 - \beta_{1}(M),$ where $\beta_{1}(M)$ is the second largest eigenvalue of $M$.
\end{thm}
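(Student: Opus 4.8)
The plan is to recognize that the stated inequality is, up to normalization, exactly the Chernoff-type concentration bound established by Lezaud, so the substantive mathematical content is already available by citation; what remains for us is to match hypotheses and to upgrade a one-sided tail estimate to the two-sided form stated here. I would therefore organize the argument in two layers. The genuine engine, which I would recall rather than reprove, is the operator-tilting (perturbative) method: for a parameter $\theta \geq 0$ one introduces the tilted kernel with entries $M_{xy}e^{\theta f(y)}$ and observes that the exponential moment $\mathbb{E}_{\pi}[e^{\theta \sum_{j=1}^{n} f(X_{j})}]$ is controlled by the $n$-th power of the Perron eigenvalue $\rho(\theta)$ of this kernel, up to a factor coming from the initial distribution $\pi$. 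A Markov-inequality (Chernoff) step then gives, for the upper tail,
$$\mathbb{P}\left[\sum_{j=1}^{n} f(X_{j}) \geq n(c+\delta)\right] \leq e^{-\theta n(c+\delta)}\,\rho(\theta)^{n}\cdot (\text{initial factor}).$$

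The main obstacle is the sharp control of $\log\rho(\theta)$, and this is where the spectral gap enters. Since $M$ is irreducible and reversible, Kato's analytic perturbation theory yields an expansion of the leading eigenvalue of the form $\log\rho(\theta) \leq \theta c + \tfrac{1}{2}\theta^{2}\sigma^{2} + O(\theta^{3})$, where the asymptotic variance $\sigma^{2}$ is bounded in terms of $\norm{f-c}_{\infty}^{2}$ divided by the gap $\eps(M) = 1 - \beta_{1}(M)$. Substituting this bound, optimizing over $\theta$, and absorbing the cubic remainder and the initial-distribution factor into the universal constants produces the sub-Gaussian rate; the precise constant $12$ in the exponent and the additive $+1$ are exactly the artifacts of Lezaud's careful bookkeeping of these remainder terms. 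I would not reproduce this estimate in detail, since it is the cited theorem, but I would flag it as the one genuinely delicate ingredient.

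Finally I would deduce the displayed two-sided, normalized statement from this one-sided estimate. The hypotheses $\sum_{x}\pi(x)f(x) = c$ and $\norm{f-c}_{\infty} \leq 1$ are precisely the centering and scaling needed, so I would pass to the centered function $\tilde{f} = f - c$, which satisfies $\mathbb{E}_{\pi}\tilde{f}=0$ and $\norm{\tilde{f}}_{\infty}\leq 1$. Applying the one-sided bound to $\tilde{f}$ controls $\mathbb{P}[\tfrac1n\sum_{j} f(X_{j}) - c \geq \delta]$, and applying it to $-\tilde{f}$ (legitimate because reversibility leaves the relevant spectral gap unchanged) controls the lower tail; a union bound over the two events yields the factor $2$ and the stated estimate $2e^{-\eps(M)\delta^{2}n/12 + 1}$. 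This symmetrization is exactly the modification recorded in Remark 3 of \cite{lezaud}, so the only real care required is verifying that our normalization conventions line up with the cited form, which they do by construction.
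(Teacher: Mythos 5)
Your proposal is correct and takes the same approach as the paper: the paper gives no proof of this statement, treating it as a direct quotation of Lezaud's Theorem 1.1 together with Remark 3 (which supplies the constant $12$ and the factor $e^{+1}$), and your passage from the one-sided bound to the stated two-sided form --- centering to $\tilde f = f - c$, applying the bound to $\pm\tilde f$, and union-bounding to get the factor $2$ --- is exactly the ``slight modification'' the paper alludes to. Your sketch of the underlying tilted-kernel/perturbation argument is an accurate account of Lezaud's proof but is not reproduced in the paper either.
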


We will apply this bound for the specific Markov chain defined above, for the functions $f_{i^{\pm}} \colon \{a_{1}, \ldots, a_{m}, a_{1}^{-1}, \ldots, a_{m}^{-1}\} \to \{0, 1\}$ which are the respective indicator functions of the subsets $\{a_{i}, a_{i}^{-1}\}$. Note that in this case, regardless of the value of $i$, the constant $c$ in the above theory is $\frac{1}{m}$. To obtain a more effective upper bound, we will estimate $\eps(M)$ in our specific situation by applying Cheeger's inequality.

\begin{defn}\label{def: cheegerconstant}
    Let $M$ be an irreducible and reversible Markov chain on a finite set $V$ with stationary distribution $\pi$. The \textit{Cheeger constant} of $M$ is defined by  
    $$\Phi(M) = \min_{S \subseteq V, \pi(S) \leq \frac{1}{2}}\frac{Q(S \times S^{c})}{\pi(S)}.$$ Here
    $$Q(S \times S^{c}) = \sum_{x \in S, y \in S^{c}}\pi(x)K(x, y),$$ where $K(x, y)$ is the transition probability.
\end{defn}

\begin{thm}[{\cite[Lemma 3.3]{sinclairjerrum}}]\label{thm: cheegerinequality}
   Let $M$ be an irreducible and reversible Markov chain on a finite set $V$ with stationary distribution $\pi$. The second largest eigenvalue $\beta_{1}(M)$ satisfies 
   $$\beta_{1}(M) \leq 1 - \frac{\Phi(M)^{2}}{2}.$$
\end{thm}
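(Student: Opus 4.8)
The plan is to prove the inequality through the standard variational (Dirichlet form) argument for the spectral gap of a reversible chain, reducing the spectral statement to a combinatorial Cheeger-type estimate. Since $M$ is reversible, it is self-adjoint with respect to the inner product $\langle f, g\rangle_\pi = \sum_{x \in V}\pi(x)f(x)g(x)$, so its eigenvalues are real; the largest is $1$ (with the constant function as eigenvector) and the second largest is $\beta_1(M)$. The key object is the Dirichlet form $\mathcal{E}(f) = \frac{1}{2}\sum_{x,y}\pi(x)K(x,y)(f(x)-f(y))^2 = \langle (I-M)f, f\rangle_\pi$, and the Courant--Fischer principle gives the variational formula $1 - \beta_1(M) = \min\{\mathcal{E}(f)/\|f\|_\pi^2 : f \perp \mathbf{1}\}$. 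Thus it suffices to produce, for a well-chosen test function, a lower bound of $\Phi(M)^2/2$ on this Rayleigh quotient.

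Next I would carry out the truncation step. Let $\phi$ be an eigenfunction for $\beta_1(M)$; since $\phi \perp \mathbf{1}$ both $\{\phi > 0\}$ and $\{\phi < 0\}$ are nonempty, and after possibly replacing $\phi$ by $-\phi$ I may assume $\pi(\{\phi > 0\}) \leq \frac{1}{2}$. Set $g = \phi_+ = \max(\phi, 0)$ and $h = \phi_-$, so that $\phi = g - h$ and $g, h$ have disjoint supports. Expanding $\langle (I-M)\phi, g\rangle_\pi = (1-\beta_1(M))\|g\|_\pi^2$ on one side, and on the other side using the bilinear Dirichlet form together with the pointwise inequality $(\phi(x)-\phi(y))(g(x)-g(y)) \geq (g(x)-g(y))^2$ (which follows from a short case check on the signs of $\phi$ at $x$ and $y$, using disjoint supports), I obtain $\mathcal{E}(g) \leq (1-\beta_1(M))\|g\|_\pi^2$. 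This reduces the theorem to showing $\mathcal{E}(g)/\|g\|_\pi^2 \geq \Phi(M)^2/2$ for the single nonnegative function $g$ supported on a set of $\pi$-measure at most $\frac{1}{2}$.

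The heart of the argument is this Cheeger estimate for $g$, which I would establish by squeezing the auxiliary quantity $\mathcal{B} = \frac{1}{2}\sum_{x,y}\pi(x)K(x,y)\,|g(x)^2 - g(y)^2|$ between two bounds. Factoring $|g(x)^2-g(y)^2| = |g(x)-g(y)|\,|g(x)+g(y)|$ and applying Cauchy--Schwarz with respect to the measure $\pi(x)K(x,y)$, together with $(g(x)+g(y))^2 \leq 2(g(x)^2+g(y)^2)$ and the stationarity/reversibility identities, yields the upper bound $\mathcal{B} \leq \sqrt{2}\,\mathcal{E}(g)^{1/2}\|g\|_\pi$. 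For the lower bound I would use the co-area formula: writing $g(x)^2 - g(y)^2 = \int_0^\infty (\mathbf{1}[g(x)^2 > t] - \mathbf{1}[g(y)^2 > t])\,dt$ and setting $S_t = \{x : g(x)^2 > t\}$ gives $\mathcal{B} = \int_0^\infty Q(S_t \times S_t^c)\,dt$. Since each $S_t$ lies in the support of $g$, we have $\pi(S_t) \leq \frac{1}{2}$, so the definition of $\Phi(M)$ gives $Q(S_t \times S_t^c) \geq \Phi(M)\pi(S_t)$, and integrating recovers $\mathcal{B} \geq \Phi(M)\|g\|_\pi^2$. Comparing the two bounds gives $\Phi(M)\|g\|_\pi^2 \leq \sqrt{2}\,\mathcal{E}(g)^{1/2}\|g\|_\pi$, hence $\mathcal{E}(g)/\|g\|_\pi^2 \geq \Phi(M)^2/2$, and combining with the truncation step completes the proof.

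I expect the main obstacle to be the Cheeger estimate in the final paragraph, specifically the coupling of the Cauchy--Schwarz upper bound with the co-area lower bound on $\mathcal{B}$; the level-set decomposition must be handled carefully so that the constraint $\pi(S_t) \leq \frac{1}{2}$ is genuinely invoked, and this is precisely where the support restriction on $g$ inherited from the truncation step is essential. By contrast, the truncation inequality, while requiring a careful sign analysis, is routine once the disjoint-support structure of $\phi_+$ and $\phi_-$ is exploited.
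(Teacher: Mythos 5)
Your proof is correct: the truncation of the eigenfunction, the polarization identity for the Dirichlet form (which uses reversibility exactly where needed), and the squeeze of $\mathcal{B}$ between the Cauchy--Schwarz upper bound and the co-area lower bound all check out, including the point that every level set $S_t$ sits inside $\{\phi>0\}$ and hence has $\pi$-measure at most $\tfrac12$. The paper itself supplies no proof of this statement---it is quoted directly from Sinclair--Jerrum---and your argument is the standard one underlying that reference, so there is nothing to compare beyond noting that you have filled in a proof the paper delegates to the literature.
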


We know that the transition probabilities for the Markov chain above are given by $K(x, y) = 0$ if $y = x^{-1}$, and $K(x, y) = \frac{1}{2m-1}$ otherwise. Therefore, for a (nonempty, proper) subset $S \subset \{a_{1}, \ldots, a_{m}, a_{1}^{-1}, \ldots, a_{m}^{-1}\}$ and any $x \in S$, at least $|S^{c}| -1$ elements $y$ of $S^{c}$ satisfy $K(x, y) = \frac{1}{2m-1}$. Therefore, we have 
$$\frac{Q(S \times S^{c})}{\pi(S)} \geq \frac{\frac{|S^{c}|-1}{2m-1}\sum_{x \in S}\pi(x)}{\pi(S)} = \frac{|S^{c}|-1}{2m-1}.$$ Since the initial distribution $\pi$ assigns $\frac{1}{2m}$ to each state in $\{a_{1}, \ldots, a_{m}, a_{1}^{-1}, \ldots, a_{m}^{-1}\}$, any subset $S$ with $\pi(S) \leq \frac{1}{2}$ satisfies $|S|\leq m$, and so $|S^{c}| \geq m$. We conclude that 
$$\Phi(M) = \min_{S \subseteq V, \pi(S) \leq \frac{1}{2}}\frac{Q(S \times S^{c})}{\pi(S)} \geq \frac{m-1}{2m-1}.$$ Therefore, by Theorem \ref{thm: cheegerinequality}, we have $\eps(M) = 1-\beta_{1}(M) \geq \frac{(m-1)^{2}}{2(2m-1)^{2}}$.

\begin{lem}\label{lem: reducedwordprobability}
    Let $r$ be a uniformly chosen reduced word of length $\ell$. For ease of notation, write $C_{m} = \frac{(m-1)^{2}}{384m^{2}(2m-1)^{2}}.$ The probability that $$ \frac{3}{4m}|r| < \#_{a_{i}^{\pm}}(r) < \frac{5}{4m}|r|$$ for all $i$ is bounded below by 
    $$1 - 2me^{-C_{m}\ell+1}.$$
\end{lem}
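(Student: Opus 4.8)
The plan is to apply the Chernoff-type bound of Theorem \ref{thm: chernoffbound} to each of the $m$ indicator functions $f_{i^{\pm}}$ separately, and then union-bound over the $m$ choices of $i$. Recall that under the identification of the $\ell$-step Markov chain $(X_{0}, \ldots, X_{\ell - 1})$ with the uniform distribution on reduced words of length $\ell$, the count $\#_{a_{i}^{\pm}}(r)$ is exactly $\sum_{j=1}^{\ell} f_{i^{\pm}}(X_{j-1})$. For the fixed function $f_{i^{\pm}}$ we have $c = \frac{1}{m}$, and since $f_{i^{\pm}}$ takes values in $\{0,1\}$ we have $\norm{f_{i^{\pm}} - c}_{\infty} = \max(c, 1-c) \leq 1$, so the hypotheses of Theorem \ref{thm: chernoffbound} are met.

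Next I would choose the deviation parameter $\delta$ so that the event in the theorem matches the desired two-sided bound on $\#_{a_{i}^{\pm}}(r)$. Taking $\delta = \frac{1}{4m}$, the event $\left| \frac{1}{m} - \frac{1}{\ell}\#_{a_{i}^{\pm}}(r) \right| < \frac{1}{4m}$ rearranges precisely to $\frac{3}{4m}\ell < \#_{a_{i}^{\pm}}(r) < \frac{5}{4m}\ell$ (using $|r| = \ell$), which is the condition we want for index $i$. Note $0 \leq \delta = \frac{1}{4m} \leq 1$ as required. Applying the theorem gives
\begin{equation*}
\mathbb{P}\left[\left| \frac{1}{m} - \frac{1}{\ell}\#_{a_{i}^{\pm}}(r)\right| \geq \frac{1}{4m}\right] \leq 2 e^{-\frac{\eps(M)\ell}{12 \cdot 16 m^{2}} + 1}.
\end{equation*}
Substituting the lower bound $\eps(M) \geq \frac{(m-1)^{2}}{2(2m-1)^{2}}$ established just before the lemma, the exponent becomes $-\frac{(m-1)^{2}}{384 m^{2}(2m-1)^{2}}\ell + 1 = -C_{m}\ell + 1$, so the failure probability for a single index $i$ is at most $2 e^{-C_{m}\ell + 1}$.

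Finally I would union-bound. The complementary event, that the two-sided inequality fails for at least one $i \in \{1, \ldots, m\}$, has probability at most $\sum_{i=1}^{m} 2 e^{-C_{m}\ell + 1} = 2m e^{-C_{m}\ell + 1}$. Hence the probability that the inequality holds simultaneously for all $i$ is at least $1 - 2m e^{-C_{m}\ell + 1}$, which is exactly the claimed bound.

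I do not anticipate a serious obstacle here; the proof is essentially a careful bookkeeping exercise matching the abstract Chernoff bound to the concrete combinatorial count. The one point requiring mild care is the translation between the normalized-average form of the event in Theorem \ref{thm: chernoffbound} and the raw-count form in the lemma statement, together with verifying that the chosen $\delta = \frac{1}{4m}$ produces the exact constants $\frac{3}{4m}$ and $\frac{5}{4m}$; the rest is direct substitution of the eigenvalue gap estimate and the union bound.
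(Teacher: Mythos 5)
Your proof is correct and follows essentially the same route as the paper: apply the Lezaud Chernoff bound to each indicator $f_{i^{\pm}}$ with $c=\frac{1}{m}$ and $\delta=\frac{1}{4m}$, substitute the Cheeger-derived bound $\eps(M)\geq\frac{(m-1)^{2}}{2(2m-1)^{2}}$ to get the single-index failure probability $2e^{-C_{m}\ell+1}$, and union-bound over the $m$ indices. The constant bookkeeping ($12\cdot 16m^{2}\cdot 2 = 384m^{2}$) matches the paper exactly.
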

\begin{proof}
    Recall that we can identify this uniformly chosen reduced word of length $\ell$ with the $\ell$-step Markov chain $(X_{0}, \ldots, X_{\ell-1})$ defined above. For each $i$, let $f_{i^{\pm}} \colon \{a_{1}, \ldots, a_{m}, a_{1}^{-1}, \ldots, a_{m}^{-1}\}\to \{0, 1\}$ be the indicator function of the subset $\{a_{i}, a_{i}^{-1}\}$. We see that in our identification, the quantity 
    $\frac{1}{\ell} \sum_{j=1}^{\ell}f_{i^{\pm}}(X_{j})$ is equal to $\frac{1}{\ell}\#_{a_{i}^{\pm}}(r).$
    
    As discussed above, the constant $c$ in Theorem \ref{thm: chernoffbound} is $\frac{1}{m}$. So, applying this theorem with the above identification, we conclude that for $0 < \delta < 1$, $$\mathbb{P}\left[\left|\frac{1}{m} - \frac{1}{\ell}\#_{a_{i}^{\pm}}(r)\right| \geq \delta \right]\leq 2e^{-\frac{(m-1)^{2}\delta^{2}}{24(2m-1)^{2}}\ell +1},$$ where we have used the inequality $\eps(M) \geq \frac{(m-1)^{2}}{2(2m-1)^{2}}$ derived above. Since $|r| = \ell$, choosing $\delta = \frac{1}{4m}$ we see that the probability that 
     $$\frac{3}{4m}|r| < \#_{a_{i}^{\pm}}(r) < \frac{5}{4m}|r|$$ is bounded below by $1 - 2e^{-\frac{(m-1)^{2}}{384m^{2}(2m-1)^{2}}\ell+1}$.

     It follows that the probability that the desired inequalities hold for all $1 \leq i \leq m$ is bounded below by 
     $$1 - 2me^{-\frac{(m-1)^{2}}{384m^{2}(2m-1)^{2}}\ell+1}.$$
\end{proof}

\subsection{Probability of Even Distribution}

In this subsection we show that for $\lambda = \frac{1}{16}$, a uniformly chosen reduced word of length $\ell$ on $m$ letters satisfies the even distribution conditions with probability approaching $1$ as $\ell \to \infty$. First, let us introduce the following notation. Suppose there exists $\alpha > 0$ and a subexponential function $f(\ell)$ so that $P \colon \mathbb{N} \to (0, 1)$ satisfies $P(\ell) \leq f(\ell)e^{-\alpha \ell}$ for all $\ell$. Then we write $P \lesssim \exp(-\alpha\ell)$. In practice, $P(\ell)$ will be the probability that a uniformly chosen reduced word of length $\ell$ satisfies some property of words.  

\begin{rmk}\label{rmk: expbounds}
   Suppose $P_{1}(\ell), P_{2}(\ell), \ldots, P_{n}(\ell)$ are the probabilities that properties $Q_{1}, \ldots, Q_{n}$ hold for a uniformly chosen reduced word of length $\ell$, and that each $P_{i} \lesssim \exp(-\alpha_{i} \ell)$. Then, if $P$ is the probability that all the properties $Q_{i}$ simultaneously hold, we see that $P\lesssim \exp(-\min_{i} \alpha_{i} \ell)$.

\end{rmk}

\begin{lem}\label{lem: reducedwordevendistribution1}
    Let $r$ be a random reduced word of length $\ell$. The probability that some subword $u$ of $r$ with $|u| = \lceil \frac{\ell}{16}  \rceil$ is of the form $s^{\lceil \ell/16 \rceil}$ for some $s \in S$ is $\lesssim \exp(-\frac{1}{16}\ln (2m-1) \ell)$.
\end{lem}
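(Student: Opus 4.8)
The plan is to establish the bound by a union bound over all positions where such a run could start and over all generators, computing each individual probability directly from the Markov chain structure set up above.

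First I would fix a starting index $i$ and a generator $s \in S$, and estimate the probability that the length-$k$ subword of $r$ beginning at position $i$ equals $s^{k}$, where $k = \lceil \ell/16 \rceil$. Under the identification of a uniform reduced word of length $\ell$ with the trajectory $(X_{0}, \ldots, X_{\ell-1})$ of the Markov chain, this is precisely the event $\{X_{i} = X_{i+1} = \cdots = X_{i+k-1} = s\}$. Since the initial (stationary) distribution assigns mass $\frac{1}{2m}$ to each state, and since $s$ is not its own inverse so the transition probability $M_{ss} = \frac{1}{2m-1}$, this probability equals
$$\frac{1}{2m}\left(\frac{1}{2m-1}\right)^{k-1}.$$

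Next I would take the union bound. There are at most $\ell$ admissible starting positions $i$ and exactly $|S| = 2m$ choices of $s$, so the probability that \emph{some} such run occurs is at most
$$\ell \cdot 2m \cdot \frac{1}{2m}\left(\frac{1}{2m-1}\right)^{k-1} = \ell\left(\frac{1}{2m-1}\right)^{k-1}.$$
Finally, using $k - 1 \geq \frac{\ell}{16} - 1$, I would bound $\left(\frac{1}{2m-1}\right)^{k-1} \leq (2m-1)\,(2m-1)^{-\ell/16}$, which gives an overall bound of $\ell(2m-1)\exp\!\left(-\frac{1}{16}\ln(2m-1)\,\ell\right)$. Since the prefactor $\ell(2m-1)$ is subexponential, this is exactly the asserted estimate $\lesssim \exp\!\left(-\frac{1}{16}\ln(2m-1)\,\ell\right)$, with $f(\ell) = \ell(2m-1)$ and $\alpha = \frac{1}{16}\ln(2m-1)$ in the definition of $\lesssim$.

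I do not expect a genuine obstacle here; the argument is an elementary union bound. The only points requiring care are reading off the correct transition probability $\frac{1}{2m-1}$ (rather than $\frac{1}{2m}$) for each constant-generator step, which is where the base $2m-1$ of the exponential decay comes from, and absorbing the linear prefactor into the subexponential function so that the final form matches the $\lesssim$ notation exactly.
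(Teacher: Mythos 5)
Your proposal is correct and follows essentially the same route as the paper: identify the word with the stationary Markov chain, compute the per-position probability of a constant run (the paper gets the identical quantity $\tfrac{2m}{2m(2m-1)^{k-1}}$ by counting, where you sum $\tfrac{1}{2m}(2m-1)^{-(k-1)}$ over the $2m$ generators), and finish with a union bound over the $\leq \ell$ starting positions. The only difference is cosmetic bookkeeping of the subexponential prefactor.
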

\begin{proof}
    Let $u_{j}$ be the subword of $r$ of length $ \lceil \frac{\ell}{16} \rceil$ starting at the $j$th letter of $r$. The distribution of $r$ is the same as the distribution of a $\ell$-step Markov chain, and so the distribution of $u_{j}$ is the uniform distribution on reduced words of length $ \lceil \frac{\ell}{16}\rceil$. In particular, the probability that $u_{j}$ is of the form $s^{ \lceil \frac{\ell}{16} \rceil}$ is $$ \frac{2m}{2m(2m-1)^{ \lceil \frac{\ell}{16}\rceil-1}} \leq \frac{2m}{(2m-1)^{ \lceil \frac{\ell}{16}\rceil}}.$$ 

    Now, there are $\leq \ell$ subwords $u_{j}$ of $r$ of length $ \lceil \frac{\ell}{16} \rceil$. By the union bound, the probability that at least one of these subwords satisfies the desired condition is bounded above by
    $$\frac{2m\ell}{(2m-1)^{ \lceil \frac{\ell}{16}  \rceil}},$$ which is $\lesssim \exp(-\frac{1}{16} \ln (2m-1)\ell)$.
\end{proof}

\begin{lem}\label{lem: reducedwordevendistribution2}
    Let $r$ be a reduced word of length $\ell$. The probability that some subword $u$ of $r$ with $|u| = \lceil \frac{\ell}{4}  \rceil$ satisfies 
    $$\#_{a_{i}^{\pm}}(u) \geq \frac{1}{2}\#_{a_{i}^{\pm}}(r)$$ for some $i$ is $\lesssim \exp(-\frac{C_{m}}{4}\ell)$. Here $C_{m}$ is the same constant defined in Lemma \ref{lem: reducedwordprobability}.
\end{lem}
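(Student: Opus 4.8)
The plan is to decouple the randomness of the global letter-counts $\#_{a_{i}^{\pm}}(r)$ from that of the subwords. First I would invoke Lemma \ref{lem: reducedwordprobability} to control the denominators: let $G$ be the event that $\frac{3}{4m}\ell < \#_{a_{i}^{\pm}}(r) < \frac{5}{4m}\ell$ for all $i$, so that $\mathbb{P}(G^{c}) \leq 2m\, e^{-C_{m}\ell+1} \lesssim \exp(-C_{m}\ell)$. Writing $B$ for the bad event in the statement, I would bound $\mathbb{P}(B) \leq \mathbb{P}(B \cap G) + \mathbb{P}(G^{c})$, so that everything reduces to estimating $\mathbb{P}(B \cap G)$.

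The key observation is that on $G$ one has $\frac{1}{2}\#_{a_{i}^{\pm}}(r) > \frac{3}{8m}\ell$ for every $i$, so $B \cap G$ is contained in the event $B'$ that some length-$\lceil \ell/4\rceil$ subword $u$ of $r$ satisfies $\#_{a_{i}^{\pm}}(u) \geq \frac{3}{8m}\ell$ for some $i$; this inclusion removes all reference to the random denominator. Now set $n = \lceil \ell/4\rceil$ and recall that, because the underlying Markov chain starts from its stationary distribution, each fixed subword $u_{j}$ of $r$ beginning at position $j$ is itself distributed as a uniform reduced word of length $n$. Since the indicator $f_{i^{\pm}}$ has mean $c = \frac{1}{m}$, and since for all large $\ell$ one has $\frac{3}{8m}\ell > \frac{5}{4m}n$ (because $\frac{3\ell/(8m)}{n} \to \frac{3}{2m} > \frac{5}{4m}$ as $\ell \to \infty$), the event $\#_{a_{i}^{\pm}}(u_{j}) \geq \frac{3}{8m}\ell$ forces the empirical average $\frac{1}{n}\#_{a_{i}^{\pm}}(u_{j})$ to exceed $\frac{1}{m}+\frac{1}{4m}$, i.e.\ to deviate from its mean by at least $\delta = \frac{1}{4m}$. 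Applying Theorem \ref{thm: chernoffbound} to $u_{j}$ with $f = f_{i^{\pm}}$ and this $\delta$, together with the bound $\eps(M) \geq \frac{(m-1)^{2}}{2(2m-1)^{2}}$ and $n \geq \ell/4$, gives an exponent $\frac{\eps(M)\delta^{2}n}{12} \geq \frac{(m-1)^{2}}{1536\, m^{2}(2m-1)^{2}}\ell = \frac{C_{m}}{4}\ell$, hence a bound of the form $2e^{-\frac{C_{m}}{4}\ell+1}$ for each fixed $j$ and $i$.

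To finish, I would apply the union bound over the at most $\ell$ starting positions $j$ and the $m$ indices $i$, giving $\mathbb{P}(B') \leq 2m\ell\, e^{-\frac{C_{m}}{4}\ell+1}$. Combining with the bound on $\mathbb{P}(G^{c})$ and noting that the prefactor $m\ell$ is subexponential while $\frac{C_{m}}{4} < C_{m}$ governs the slower of the two decay rates, I conclude $\mathbb{P}(B) \lesssim \exp(-\frac{C_{m}}{4}\ell)$, as claimed. The one genuinely delicate point is the reduction via the event $G$: without first ruling out an abnormally small total count $\#_{a_{i}^{\pm}}(r)$, the threshold $\frac{1}{2}\#_{a_{i}^{\pm}}(r)$ could itself be small and the condition $\#_{a_{i}^{\pm}}(u) \geq \frac{1}{2}\#_{a_{i}^{\pm}}(r)$ would cease to be a large deviation; quantifying exactly how much room the factor $\frac{1}{2}$ buys—here a deviation of size $\frac{1}{4m}$ for large $\ell$—is what simultaneously makes the Chernoff bound applicable and pins down the final exponent $\frac{C_{m}}{4}$.
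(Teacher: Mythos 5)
Your proposal is correct and follows essentially the same route as the paper: condition on the good event that all global counts $\#_{a_i^{\pm}}(r)$ exceed $\tfrac{3}{4m}\ell$, observe that for large $\ell$ the threshold $\tfrac{1}{2}\#_{a_i^{\pm}}(r) > \tfrac{3}{8m}\ell \geq \tfrac{5}{4m}\lceil \ell/4\rceil$ turns the subword condition into a deviation of size $\tfrac{1}{4m}$ for a stationary chain of length $\lceil \ell/4\rceil$, and finish with a union bound over starting positions. The only cosmetic difference is that you apply Theorem \ref{thm: chernoffbound} to the subwords directly, whereas the paper reuses Lemma \ref{lem: reducedwordprobability} (which packages exactly that Chernoff application), yielding the same exponent $\tfrac{C_m}{4}\ell$.
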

\begin{proof}
    Without loss of generality we can assume that $\ell$ is large enough so that $\frac{3}{8m} \ell \geq \frac{5}{4m} \lceil \frac{\ell}{4} \rceil$. Let $u_{j}$ be the subword of $r$ of length $ \lceil \frac{1}{4} \ell \rceil$ starting at the $j$th letter of $r$. As in the previous lemma, the distribution of $u_{j}$ is the uniform distribution on reduced words of length $\lceil \frac{\ell}{16} \rceil.$
    
    By Lemma \ref{lem: reducedwordprobability}, the probability that $\#_{a_{i}^{\pm}}(u_{j}) < \frac{5}{4m}|u_{j}|$ for all $i$ is bounded below by 
    $$1 - 2me^{-C_{m}\lceil \frac{\ell}{4}  \rceil+1}.$$ Likewise, the probability that $\#_{a_{i}^{\pm}}(r) > \frac{3}{4m}|r|$ for all $i$ is bounded below by $$1 - 2me^{-C_{m}\ell +1}.$$ Therefore, the probability that all the inequalities
    $$\frac{1}{2}\#_{a_{i}^{\pm}}(r) > \frac{3}{8m}|r| = \frac{3}{8m}\ell \geq \frac{5}{4m}\lceil \frac{\ell}{4}\rceil = \frac{5}{4m}|u_{j}| > \#_{a_{i}^{\pm}}(u_{j})$$ hold for all $i$ is bounded below by 
    $$1 - 2me^{-C_{m}\lceil \frac{\ell}{4}  \rceil+1} - 2me^{-C_{m}\ell +1}.$$ As a consequence, the probability that $\#_{a_{i}^{\pm}}(u_{j}) \geq \frac{1}{2}\#_{a_{i}^{\pm}}(r)$ is bounded above by 
    $$2me^{-C_{m}\lceil \frac{\ell}{4}  \rceil+1} + 2me^{-C_{m}\ell +1}.$$ Applying the union bound to account for all the different $u_{j}$, the probability we wish to control is bounded above by 
    $$\ell\left(2me^{-C_{m}\lceil \frac{\ell}{4}  \rceil+1} + 2me^{-C_{m}\ell +1}\right) \lesssim \exp(-\frac{C_{m}}{4}\ell).$$
\end{proof}

\begin{lem}\label{lem: reducedwordevendistribution3}
    Let $r$ be a reduced word of length $\ell$. The probability that some subword $u$ of $r$ with $|u| = \lceil \frac{1}{16} \ell \rceil$ satisfies 
    $$\#_{a_{i}^{\pm}}(u) \leq \frac{1}{8m}|u|$$ for some $i$ is $\lesssim \exp(-\frac{C_{m}}{16}\ell)$.
\end{lem}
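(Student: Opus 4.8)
The plan is to mirror the structure of Lemma \ref{lem: reducedwordevendistribution2}, applying the Chernoff bound from Lemma \ref{lem: reducedwordprobability} to each window of length $\lceil \frac{1}{16}\ell\rceil$ and then taking a union bound over all such windows. Concretely, for each starting position $j$ let $u_{j}$ be the subword of $r$ of length $\lceil \frac{1}{16}\ell\rceil$ beginning at the $j$th letter. As in the previous two lemmas, the key observation is that the distribution of $u_{j}$ is exactly the uniform distribution on reduced words of length $\lceil \frac{1}{16}\ell\rceil$, since $r$ is distributed as an $\ell$-step realization of the Markov chain and any contiguous block of that chain, started from the stationary distribution, is again distributed as a shorter run of the same chain.

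First I would apply Lemma \ref{lem: reducedwordprobability} to the word $u_{j}$ (of length $\lceil \frac{1}{16}\ell\rceil$ rather than $\ell$) to conclude that the probability that
$$\frac{3}{4m}|u_{j}| < \#_{a_{i}^{\pm}}(u_{j})$$
holds for all $i$ is bounded below by $1 - 2me^{-C_{m}\lceil \frac{1}{16}\ell\rceil + 1}$. Since $\frac{3}{4m}|u_{j}| > \frac{1}{8m}|u_{j}|$ trivially, the failure event $\#_{a_{i}^{\pm}}(u_{j}) \leq \frac{1}{8m}|u_{j}|$ is contained in the complement of this event, so for a single window the probability of failure for some $i$ is at most $2me^{-C_{m}\lceil \frac{1}{16}\ell\rceil + 1}$.

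Then I would take a union bound over the at most $\ell$ windows $u_{j}$, giving an upper bound of
$$\ell \cdot 2me^{-C_{m}\lceil \frac{1}{16}\ell\rceil + 1}$$
on the probability that some subword $u$ of length $\lceil \frac{1}{16}\ell\rceil$ violates the lower bound. Since the leading polynomial factor $2m\ell$ is subexponential, this is $\lesssim \exp(-\frac{C_{m}}{16}\ell)$, which is exactly the claimed bound. I do not anticipate any genuine obstacle here: the lemma is strictly easier than Lemma \ref{lem: reducedwordevendistribution2} because the desired inequality compares $\#_{a_{i}^{\pm}}(u)$ against a multiple of $|u|$ itself rather than against the global count $\#_{a_{i}^{\pm}}(r)$, so only the single Chernoff estimate on $u_{j}$ is needed and there is no second event to intersect. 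The only minor point to verify is that $\frac{3}{4m} > \frac{1}{8m}$ leaves ample slack, so one could even use a weaker deviation threshold $\delta$; using the same $\delta = \frac{1}{4m}$ as in Lemma \ref{lem: reducedwordprobability} keeps the constant $C_{m}$ uniform across all three even-distribution lemmas, which is convenient for the final application of Remark \ref{rmk: expbounds}.
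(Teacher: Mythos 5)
Your proposal is correct and matches the paper's proof essentially verbatim: both apply Lemma \ref{lem: reducedwordprobability} to each window $u_{j}$ of length $\lceil \frac{\ell}{16}\rceil$, use the trivial inequality $\frac{1}{8m}|u_{j}| < \frac{3}{4m}|u_{j}|$ to contain the failure event in the Chernoff tail, and conclude by a union bound over the at most $\ell$ windows. The only cosmetic difference is that you carry the factor $e^{+1}$ and the ceiling in the exponent explicitly, which the $\lesssim$ notation absorbs anyway.
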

\begin{proof}
    Let $u_{j}$ be the subword of $r$ of length $ \lceil \frac{1}{16} \ell \rceil$ starting at the $j$th letter of $r$. By Lemma \ref{lem: reducedwordprobability}, the probability that $\#_{a_{i}^{\pm}}(u_{j})  \leq \frac{1}{8m}|u_{j}| < \frac{3}{4m}|u_{j}|$ for all $i$ is bounded above by 
    $$2me^{-\frac{C_{m}}{16}\ell}.$$ Applying the union bound to account for all the different $u_{j}$, the desired probability bounded above by 
    $$2m\ell e^{-\frac{C_{m}}{16}\ell} \lesssim \exp(-\frac{C_{m}}{16}\ell).$$
\end{proof}

\begin{cor}\label{cor: reducedwordevendistribution}
    Let $r$ be a uniformly chosen reduced word of length $\ell$. The probability that $r$ fails to satisfy the even distribution conditions is $\lesssim \exp(-\frac{C_{m}}{16}\ell)$.
\end{cor}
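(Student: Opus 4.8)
The plan is to assemble Corollary \ref{cor: reducedwordevendistribution} directly from the three preceding lemmas via a union bound, so the corollary is essentially bookkeeping rather than new analysis. The even distribution conditions in Definition \ref{def: translationapparentpresentation} consist of exactly three clauses (i), (ii), (iii), and each of Lemmas \ref{lem: reducedwordevendistribution1}, \ref{lem: reducedwordevendistribution2}, and \ref{lem: reducedwordevendistribution3} bounds the probability that a uniformly chosen reduced word $r$ of length $\ell$ \emph{fails} the corresponding clause (with $\lambda = \frac{1}{16}$, so that $\lceil \lambda|r|\rceil = \lceil \ell/16\rceil$ and $\lceil 4\lambda|r|\rceil = \lceil \ell/4\rceil$). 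The respective failure probabilities are $\lesssim \exp(-\frac{1}{16}\ln(2m-1)\ell)$, $\lesssim \exp(-\frac{C_m}{4}\ell)$, and $\lesssim \exp(-\frac{C_m}{16}\ell)$.

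First I would note that $r$ fails the even distribution conditions precisely when it fails at least one of the three clauses, so the probability of failure is at most the sum of the three individual failure probabilities. Invoking Remark \ref{rmk: expbounds}, the sum of finitely many quantities each $\lesssim \exp(-\alpha_i \ell)$ is itself $\lesssim \exp(-\min_i \alpha_i\,\ell)$, since one may factor out the slowest-decaying exponential and absorb the rest into the subexponential prefactor. Thus the combined failure probability is $\lesssim \exp(-\alpha\ell)$, where
$$\alpha = \min\left\{\tfrac{1}{16}\ln(2m-1),\ \tfrac{C_m}{4},\ \tfrac{C_m}{16}\right\}.$$

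The only remaining step is to verify that this minimum equals $\frac{C_m}{16}$, yielding the stated rate. Since $C_m = \frac{(m-1)^2}{384 m^2(2m-1)^2}$ is a small positive constant for every $m \geq 2$, while $\frac{1}{16}\ln(2m-1)$ is comparatively large, one checks that $\frac{C_m}{16}$ is the smallest of the three exponents (and $\frac{C_m}{16} < \frac{C_m}{4}$ trivially); hence $\alpha = \frac{C_m}{16}$, as claimed. I do not anticipate any genuine obstacle here: the content of the corollary lives entirely in the three lemmas, and the present statement is a routine aggregation. The only point requiring a moment's care is confirming which of the three exponents is dominant, but this is immediate from the explicit form of $C_m$.
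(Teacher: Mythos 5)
Your proposal is correct and follows exactly the paper's argument: the paper's proof is likewise a one-line aggregation of Lemmas \ref{lem: reducedwordevendistribution1}, \ref{lem: reducedwordevendistribution2}, and \ref{lem: reducedwordevendistribution3} via Remark \ref{rmk: expbounds}, together with the observation that $\frac{1}{16}\ln(2m-1)$ and $\frac{C_m}{4}$ both dominate $\frac{C_m}{16}$.
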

\begin{proof}
    This follows from Lemma \ref{lem: reducedwordevendistribution1}, Lemma \ref{lem: reducedwordevendistribution2}, Lemma \ref{lem: reducedwordevendistribution3}, Remark \ref{rmk: expbounds}, and the fact that $\frac{1}{16} \ln(2m-1) , \frac{C_{m}}{4} \geq \frac{C_{m}}{16}$ for $m \geq 2$.
\end{proof}

\subsection{Genericity of Translation Apparent Presentations}

Since our random groups involve choosing  cyclically reduced words, we now show the cyclically reduced version of results in the previous section.

\begin{lem}\label{lem: cyclicallyreducedwordprobability}
    Let $r$ be a uniformly chosen cyclically reduced word of length $\ell$. The probability that some cyclic permutation of $r$ or $r^{-1}$ fails to satisfy the even distribution conditions (for $\lambda = \frac{1}{16}$) is $\lesssim \exp(-\frac{C_{m}}{16}\ell)$.
\end{lem}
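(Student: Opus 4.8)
The statement to prove is Lemma \ref{lem: cyclicallyreducedwordprobability}: for a uniformly chosen \emph{cyclically reduced} word $r$ of length $\ell$, the probability that some cyclic permutation of $r$ or of $r^{-1}$ fails the even distribution conditions is $\lesssim \exp(-\frac{C_m}{16}\ell)$. The essential difficulty is that all the previous work (Corollary \ref{cor: reducedwordevendistribution}) controls the even distribution conditions only for a single uniformly chosen \emph{reduced} word, whereas here we must (a) pass from reduced to cyclically reduced distributions, and (b) take a union bound over all $\ell$ cyclic permutations of $r$ and the $\ell$ cyclic permutations of $r^{-1}$.

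The plan is as follows. First I would address the cyclic permutations. The even distribution conditions are conditions about the counts $\#_{a_i^\pm}$ of letters inside subwords of a given length; these counts are computed over \emph{subwords}, which are stretches of consecutive letters read off the word. For a fixed word $r$, taking a cyclic permutation $r^{(k)}$ just reindexes which starting position a subword begins at, so the collection of ``subwords of $r^{(k)}$ of a given length'' and the collection of ``subwords of $r$ of that length that are allowed to wrap around the end'' are closely related. The cleanest route is to observe that the failure of the even distribution conditions for \emph{any} cyclic permutation of $r$ is equivalent to the existence of a bad \emph{cyclic} subword of $r$ of the relevant length — a window of consecutive letters read cyclically around $r$. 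There are at most $\ell$ such cyclic windows of each prescribed length, exactly as in the proofs of Lemmas \ref{lem: reducedwordevendistribution1}, \ref{lem: reducedwordevendistribution2}, and \ref{lem: reducedwordevendistribution3}, where the union bound already ran over $\leq \ell$ windows. So the union bound over cyclic permutations costs only another factor that is absorbed into the subexponential prefactor $f(\ell)$ in the $\lesssim$ notation; it does not change the exponential rate.

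The remaining issue is that the distribution of a cyclic window of a uniformly chosen \emph{cyclically reduced} word is no longer exactly the uniform distribution on reduced words that the Markov-chain/Chernoff machinery (Lemma \ref{lem: reducedwordprobability}, via Theorem \ref{thm: chernoffbound}) requires. Here I would use a standard comparison argument: the number of cyclically reduced words of length $\ell$ is $(2m-1)^\ell + O((2m-1)^{\ell/2})$, so it differs from the number of reduced words of length $\ell$, namely $2m(2m-1)^{\ell-1}$, only by a bounded multiplicative factor (bounded uniformly in $\ell$). Consequently, for any event $E$ on words, the probability of $E$ under the cyclically-reduced distribution is at most a \emph{constant} multiple of its probability under the reduced distribution. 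Since the constant is absorbed into the subexponential prefactor in the definition of $\lesssim$, every bound of the form $P \lesssim \exp(-\alpha\ell)$ proved for reduced words transfers verbatim to cyclically reduced words with the same exponential rate $\alpha$.

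Putting these together: by Corollary \ref{cor: reducedwordevendistribution} the probability that a uniformly chosen reduced word fails the even distribution conditions is $\lesssim \exp(-\frac{C_m}{16}\ell)$; by the comparison above the same bound holds for cyclically reduced words; and since $r^{-1}$ is also uniformly cyclically reduced when $r$ is (inversion is a measure-preserving involution on cyclically reduced words), the event for $r^{-1}$ carries the identical bound. Absorbing the extra factor of $2\ell$ from the cyclic-permutation and inverse union bounds into $f(\ell)$, Remark \ref{rmk: expbounds} yields the claimed rate $\lesssim \exp(-\frac{C_m}{16}\ell)$. I expect step (b), the comparison of the cyclically-reduced and reduced distributions, to be the only genuinely new point; the cyclic union bound is routine since the proofs of the three even-distribution lemmas already range over windows indexed by starting position.
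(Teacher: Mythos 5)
Your argument is correct and is essentially the paper's proof in probabilistic rather than counting language: the paper bounds the number of bad reduced words via Corollary \ref{cor: reducedwordevendistribution}, notes that cyclically reduced words form a subset of the reduced words of comparable cardinality ($\geq (2m-1)^{\ell}$ versus $2m(2m-1)^{\ell-1}$), and multiplies by the orbit size $2\ell$ under cyclic permutation and inversion --- exactly your measure-comparison plus union bound over the orbit, using that these operations are bijections on cyclically reduced words. The only caveat is that your opening detour through wrap-around ``cyclic windows'' is unnecessary and would require controlling the distribution of wrapping windows, which the earlier lemmas do not cover; the operative (and correct) step is the one in your final paragraph, the union bound over the $2\ell$ uniformly distributed orbit elements.
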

\begin{proof}
    Note that a cyclically reduced word $r$ which fails the even distribution conditions is also a reduced word $r$ which fails the same conditions. By Corollary \ref{cor: reducedwordevendistribution}, there exists a subexponential function $f(\ell)$ so that among the $2m(2m-1)^{\ell-1}$ reduced words of length $\ell$, at most 
    $$2m(2m-1)^{\ell-1} f(\ell) e^{-\frac{C_{m}}{16}\ell}$$ of them fail the even distribution condition. Therefore, the same upper bound holds for the number of cyclically reduced words of length $\ell$ failing those conditions.

    Note that for a cyclically reduced word $r$ of length $\ell$, there are at most $2\ell$ distinct cyclically reduced words which arise as a cyclic permutation of $r$ or $r^{-1}$. Therefore, the number of cyclically reduced words $r$ of length $\ell$ such that some cyclic permutation of $r$ or $r^{-1}$ fails the even distribution conditions is bounded above by 
    $$4m(2m-1)^{\ell-1}\ell f(\ell)e^{-\frac{C_{m}}{16}\ell}.$$

    As shown in \cite[Proposition 17.2]{mann}, the number of cyclically reduced words of length $\ell$ is $(2m-1)^{\ell}+ m + (-1)^{\ell}(m-1) \geq (2m-1)^{\ell}$. Therefore, the probability we wish to control is bounded above by 
    $$\frac{4m}{2m-1} \ell f(\ell) e^{-\frac{C_{m}}{16}\ell} \lesssim \exp(-\frac{C_{m}}{16}\ell).$$
\end{proof}

Finally, we prove Theorem \ref{thm: mainthm2} and Theorem \ref{thm: mainthm3}.

\begin{thm}[Theorem \ref{thm: mainthm2}]
    For $m \geq 2$, let $d_{m} = \frac{C_{m}}{16\ln(2m-1)}$. Then for any density $0 \leq d < d_{m}$, a generic random group on $m$ letters at density $d$ admits a $\frac{1}{16}$-translation-apparent presentation. In fact, this presentation can be taken to be the symmetrization of the random presentation for the group. 
\end{thm}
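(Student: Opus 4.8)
The plan is to verify each of the three defining conditions of a $\frac{1}{16}$-translation-apparent presentation (Definition \ref{def: translationapparentpresentation}) for the symmetrization $\langle A \mid \mathcal{R}_{\ell}^{*}\rangle$ of the random presentation, where $\mathcal{R}_{\ell}$ is the set of $(2m-1)^{d\ell}$ randomly chosen cyclically reduced relators of length $\ell$. Condition (1), that $S = A \cup A^{-1}$, holds by construction. For condition (2), the small cancellation requirement, I would invoke the standard genericity of the $C'(2d+\eps)$ condition at density $d<\frac{1}{2}$ \cite{gromovrandom}. Since $d_{m} = \frac{C_{m}}{16\ln(2m-1)}$ is extremely small---in particular $d_{m} < \frac{1}{32}$---for any $d < d_{m}$ one may choose $\eps > 0$ small enough that $2d + \eps < \frac{1}{16}$, which yields the $C'(\frac{1}{16})$ condition generically. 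Note also that cyclic permutations and inverses of cyclically reduced words remain cyclically reduced of the same length, so passing to the symmetrization preserves these properties and does not change the group.

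The crux is condition (3), the even distribution condition, which must hold for \emph{every} relator in the symmetrization $\mathcal{R}_{\ell}^{*}$; equivalently, every cyclic permutation of each chosen $r \in \mathcal{R}_{\ell}$ and of each $r^{-1}$ must satisfy conditions (i)--(iii). Lemma \ref{lem: cyclicallyreducedwordprobability} already bounds the failure probability for a \emph{single} uniformly chosen cyclically reduced word by $f(\ell)e^{-\frac{C_{m}}{16}\ell}$ for some subexponential $f$. Since the random presentation is obtained by selecting $(2m-1)^{d\ell}$ such words, I would apply a union bound (which requires no independence): the probability that \emph{some} chosen relator, or one of its cyclic permutations or inverses, violates the even distribution conditions is at most
$$(2m-1)^{d\ell}\, f(\ell)\, e^{-\frac{C_{m}}{16}\ell} = f(\ell)\, \exp\!\left(\ell\left(d\ln(2m-1) - \tfrac{C_{m}}{16}\right)\right).$$

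The key step is the threshold calculation: this bound tends to $0$ as $\ell \to \infty$ exactly when $d\ln(2m-1) < \frac{C_{m}}{16}$, that is, when $d < \frac{C_{m}}{16\ln(2m-1)} = d_{m}$. This is precisely where the stated value of $d_{m}$ enters, and it explains the terminology ``low'' density. Combining this with the genericity of $C'(\frac{1}{16})$ from the previous paragraph---the intersection of two probability-one-in-the-limit events is again probability-one-in-the-limit, by a union bound on complements---I conclude that for $d < d_{m}$ the symmetrized random presentation is generically $\frac{1}{16}$-translation-apparent, as claimed. I expect the main obstacle, in the sense of genuine mathematical content, to have already been absorbed into Lemma \ref{lem: cyclicallyreducedwordprobability} and the Chernoff-type estimate of Theorem \ref{thm: chernoffbound} behind it; what remains in the proof of the theorem itself is essentially the exponential-counting balance displayed above, whose only delicate point is ensuring the relator count $(2m-1)^{d\ell}$ is dominated by the per-word failure decay $e^{-C_{m}\ell/16}$.
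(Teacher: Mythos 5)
Your proposal is correct and follows essentially the same route as the paper: genericity of $C'(2d+\eps)$ for the small cancellation condition, Lemma \ref{lem: cyclicallyreducedwordprobability} plus a union bound over the $(2m-1)^{d\ell}$ relators for the even distribution conditions, and the observation that the resulting bound $(2m-1)^{d\ell}f(\ell)e^{-C_{m}\ell/16}$ vanishes precisely when $d<d_{m}$. The threshold computation you isolate as the key step is exactly how the paper's proof concludes.
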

\begin{proof}
    Fix $\ell$, and let $\langle A | \mathcal{R}_{\ell} \rangle$ denote the random presentation for the random group $G_{\ell}$. Here, $A = \{a_{1}, \ldots, a_{m}\}$ and $\mathcal{R}_{\ell}$ consists of a set of at most $(2m-1)^{d\ell}$ uniformly chosen cyclically reduced words of length $\ell$. Consider the symmetrization $\langle A | \mathcal{R}_{\ell}^{*}\rangle$, obtained by adding in all cyclic permutations of words in $\mathcal{R}_{\ell}$ and their inverses.

    It is known that a generic random group at density $d < \frac{1}{2}$ is infinite, torsion free, non-elementary hyperbolic, and satisfies the $C'(2d + \eps)$ small cancellation condition for any $\eps > 0$ \cite[9.B]{gromovrandom}\cite[V.d]{olliviersurvey}. Since in our case $d_{m} < \frac{1}{32}$, we see that a generic random group at density $d < d_{m}$ is infinite, torsion free, non-elementary hyperbolic, and satisfies the $C'(\frac{1}{16})$ small cancellation condition. Therefore, it only remains to show that the even distribution condition holds generically.

    By Lemma \ref{lem: cyclicallyreducedwordprobability}, the probability that a uniformly chosen cyclically reduced word $r$ of length $\ell$ has some cyclic permutation of itself or $r^{-1}$ which fails the even distribution conditions is $\lesssim \exp(-\frac{C_{m}}{16}\ell)$. In other words, there exists a subexponential function $f(\ell)$ (distinct from previous uses of $f(\ell)$) so that this probability is bounded above by 
    $$f(\ell) e^{-\frac{C_{m}}{16}\ell}.$$
    
    There are at most $(2m-1)^{d\ell}$ uniformly chosen cyclically reduced words of length $\ell$ in $\mathcal{R}_{\ell}$, so by the union bound the probability that some $r \in \mathcal{R}_{\ell}^{*}$ fails the even distribution conditions is bounded above by 
    $$(2m-1)^{d\ell}f(\ell)e^{-\frac{C_{m}}{16}\ell}.$$ Since $f(\ell)$ is subexponential in $\ell$, and since $d\ln(2m-1) < d_{m}\ln(2m-1) =  \frac{C_{m}}{16}$, we see that this upper bound converges to $0$ as $\ell \to \infty$. This proves that the even distribution conditions are generically satisfied by $\langle A | \mathcal{R}_{\ell}^{*}\rangle$, and so admitting a $\frac{1}{16}$-translation-apparent presentation is generic at density $d$.
\end{proof}

Before proving Theorem \ref{thm: mainthm3}, we quickly show a helpful lemma regarding weights on the free group. This lemma is a consequence of a more general stability result (Proposition \ref{prop: stability}), which we prove in the next section.

\begin{lem}\label{lem: freestability}
    Fix $\eps > 0$. There exists $\delta > 0$ so that if $w$ is a normalized weight on $F_{m}$ with the standard generating set, and $h(F_{m}, w) < h(F_{m}, w_{*}) + \delta$, then
    $$\norm{w - w_{*}} < \eps$$ where $w_{*}$ is the uniform weight. 
\end{lem}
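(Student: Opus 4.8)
The plan is to argue by contradiction using a compactness argument, exploiting the fact that $w_{*}$ is the \emph{unique} normalized minimizer for $F_{m}$. Suppose the claim fails for some fixed $\eps > 0$. Then for each $n$ we can find a normalized weight $w_{n} \in \mathcal{W}$ with $h(F_{m}, w_{n}) < h(F_{m}, w_{*}) + \frac{1}{n}$ yet $\norm{w_{n} - w_{*}} \geq \eps$. The goal is to extract from this sequence a limiting minimizer that differs from $w_{*}$, contradicting uniqueness.

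The first step is to confine the $w_{n}$ to a compact set. Since $F_{m}$ is torsion-free, non-elementary hyperbolic for $m \geq 2$, Proposition \ref{prop: compactness} applies: taking $N = h(F_{m}, w_{*}) + 1$, there is $\delta_{0} > 0$ so that $w_{\min} < \delta_{0}$ forces $h(F_{m}, w) > N$. For all sufficiently large $n$ we have $h(F_{m}, w_{n}) < h(F_{m}, w_{*}) + 1 = N$, so each such $w_{n}$ must satisfy $w_{n,\min} \geq \delta_{0}$, i.e. $w_{n} \in \mathcal{W}_{\geq \delta_{0}}$, which is compact.

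Passing to a subsequence, $w_{n_{k}} \to w_{\infty}$ for some $w_{\infty} \in \mathcal{W}_{\geq \delta_{0}}$; in particular $w_{\infty}$ is a genuine (strictly positive) normalized weight. By continuity of $h(F_{m}, -)$ on $\mathcal{W}$ we get $h(F_{m}, w_{\infty}) = \lim_{k} h(F_{m}, w_{n_{k}}) = h(F_{m}, w_{*})$, so $w_{\infty}$ is itself a normalized minimizer. But $\norm{w_{\infty} - w_{*}} \geq \eps > 0$, so $w_{\infty} \neq w_{*}$, contradicting the uniqueness of $w_{*}$ as the normalized minimizer for $(F_{m}, S)$ established by Lim \cite{lim}.

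I expect the compactness step to be the main obstacle: near-minimizing weights must be prevented from escaping to the boundary of the weight simplex, where some coordinate tends to $0$. This is precisely what Proposition \ref{prop: compactness} supplies, since the entropy blows up there; without it the limit weight $w_{\infty}$ could be degenerate and the uniqueness of $w_{*}$ could not be invoked. The remaining steps---extracting a convergent subsequence and passing the entropy bound to the limit by continuity---are routine. This compactness-plus-uniqueness scheme is the same one I would use to drive the general stability result Proposition \ref{prop: stability}, of which this lemma is the special case $G = F_{m}$.
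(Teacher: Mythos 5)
Your proof is correct and follows essentially the same route as the paper: the paper derives this lemma as an immediate corollary of the general stability result (Proposition \ref{prop: stability}), whose proof is exactly the compactness-via-Proposition \ref{prop: compactness} plus continuity plus uniqueness-of-the-minimizer argument you have written out inline here. The only difference is that the paper delegates to that proposition rather than repeating the subsequence extraction, as you yourself anticipate in your closing remark.
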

\begin{proof}
    Suppose, for a contradiction, that no such $\delta$ exists. Then we can find a sequence of normalized weights $(w_{i})_{i \geq 1}$ and a sequence $(\delta_{i})_{i \geq 1}$ so that $\delta_{i} \to 0$, $h(F_{m}, w_{i}) < h(F_{m}, w_{*}) + \delta_{i}$, and $\norm{w_{i} - w_{*}} \geq \eps$. Since $w_{*}$ is the unique normalized minimizer for $F_{m}$, we have $h(F_{m}, w_{i}) \geq h(F_{m}, w_{*})$ and so $h(F_{m}, w_{i}) \to h(F_{m}, w_{*})$. By Proposition \ref{prop: stability} we have $\norm{w_{i}-w_{*}} \to 0$, contradiction.
\end{proof}

\begin{thm}[Theorem \ref{thm: mainthm3}]
   Fix $\eps > 0$, and let $d_{m}$ be defined as above. A generic random group $G$ on $m$ letters at density $d < d_{m}$ satisfies $h(F_{m}, w) - \eps \leq h(G, w) \leq h(F_{m}, w)$ for \textit{any} normalized weight $w$. Moreover, $G$ admits a unique normalized minimizing metric  $w_{0}$ satisfying: 
    \begin{enumerate}
        \item $\norm{w_{0}- w_{*}} < \eps$, where $w_{*}$ is the uniform normalized weight on the generators.
        \item The minimum entropy of $G$ is within $\eps$ of the minimum entropy of $F_{m}$.
    \end{enumerate}
\end{thm}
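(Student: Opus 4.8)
The plan is to assemble the conclusion from three ingredients established earlier: the genericity of translation-apparent presentations (Theorem \ref{thm: mainthm2}), the two-sided entropy comparison with the free group (Proposition \ref{prop: normalizedreducedinequality}), and the stability estimate on the free group (Lemma \ref{lem: freestability}). First I would fix the generic event on which to work: by Theorem \ref{thm: mainthm2}, with probability tending to $1$ as $\ell \to \infty$ the random group $G = G_{\ell}$ admits a $\frac{1}{16}$-translation-apparent presentation $\langle A \mid \mathcal{R}_{\ell}^{*}\rangle$ obtained by symmetrization, and since $G$ is generically torsion-free, non-elementary hyperbolic, Theorem \ref{thm: mainthm1} furnishes a unique normalized minimizer $w_{0}$. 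All subsequent steps take place on this event.

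Next I would verify that the numerical hypotheses of Proposition \ref{prop: normalizedreducedinequality} hold once $\ell$ is large. Every relator in $\mathcal{R}_{\ell}^{*}$ has length $\ell$, so $l = \lceil \ell/16 \rceil$, which exceeds $32m$ for large $\ell$. For the inequality $8mjl < (2m-1)^{l/16m - 2}$ I would bound $j = \max_{s} |\mathcal{R}_{\lambda, s}|$ crudely: there are at most $2\ell(2m-1)^{d\ell}$ relators in $\mathcal{R}_{\ell}^{*}$, and each contributes at most $\ell$ subwords of the prescribed length ending in a fixed $s$, so $j \leq 2\ell^{2}(2m-1)^{d\ell}$ deterministically. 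Thus the left-hand side grows like $(2m-1)^{d\ell}$ up to polynomial factors, while the right-hand side grows like $(2m-1)^{\ell/(256m)}$; since $d < d_{m}$ and one checks directly that $d_{m} \leq \frac{1}{256m}$ for all $m \geq 2$, the inequality holds for all sufficiently large $\ell$. Proposition \ref{prop: normalizedreducedinequality} then yields, for every normalized weight $w$,
$$h(F_{m}, w) - \tfrac{2}{l} \leq h(G, w) \leq h(F_{m}, w),$$
where $l = \lceil \ell/16 \rceil \to \infty$.

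The first assertion is now immediate: taking $\ell$ large enough that $\frac{2}{l} < \eps$ gives $h(F_{m}, w) - \eps \leq h(G, w) \leq h(F_{m}, w)$ uniformly in $w$. For the statements about $w_{0}$ I would chain the displayed bound with the minimality of $w_{0}$ for $G$ and of $w_{*}$ for $F_{m}$ (the latter being the uniqueness of the uniform minimizer on $F_{m}$ used already in Proposition \ref{prop: integerreducedinequality}). On one hand $h(G, w_{0}) \leq h(G, w_{*}) \leq h(F_{m}, w_{*})$; on the other $h(G, w_{0}) \geq h(F_{m}, w_{0}) - \frac{2}{l} \geq h(F_{m}, w_{*}) - \frac{2}{l}$. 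Together these give $|h(G, w_{0}) - h(F_{m}, w_{*})| \leq \frac{2}{l} < \eps$ for large $\ell$, which is part (2). The same chain also shows $h(F_{m}, w_{0}) \leq h(F_{m}, w_{*}) + \frac{2}{l}$; feeding this into Lemma \ref{lem: freestability} (which produces a $\delta > 0$ from the given $\eps$) and taking $\ell$ large enough that $\frac{2}{l} < \delta$ forces $\norm{w_{0} - w_{*}} < \eps$, which is part (1).

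Finally I would observe that each of the three conclusions needs only that $\ell$ exceed a threshold depending on $\eps$ and $m$, while the generic event has probability tending to $1$, so their conjunction remains generic. I expect the main obstacle to be the bookkeeping in the second paragraph, namely confirming that the exponential rate $(2m-1)^{\ell/(256m)}$ genuinely dominates $8mjl$. In other words, the delicate point is that $d_{m}$ must be small enough for the counting in Proposition \ref{prop: normalizedreducedinequality}, and not merely small enough for the even-distribution estimates behind Theorem \ref{thm: mainthm2}; everything after that is a routine assembly of the inequalities already proved.
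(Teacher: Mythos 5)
Your proposal is correct and follows essentially the same route as the paper: work on the generic event from Theorem \ref{thm: mainthm2}, verify the numerical hypotheses of Proposition \ref{prop: normalizedreducedinequality} using $l = \lceil \ell/16\rceil$ and the crude bound on $j$ together with $d_m < \frac{1}{256m}$, and then combine the resulting two-sided entropy comparison with Lemma \ref{lem: freestability} and the minimality of $w_0$ and $w_*$. The only (harmless) difference is that you derive parts (1) and (2) by chaining inequalities directly, where the paper obtains (1) by contradiction via the same estimates.
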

\begin{proof}
  Let $\delta $ satisfy the conditions of Lemma \ref{lem: freestability} for our fixed $\eps$. Note that we can take $\delta < \frac{\eps}{2}$ without loss of generality. Fix $\ell$ sufficiently large so that $\lceil \frac{\ell}{16} \rceil > 32m$, $\frac{2}{\lceil \frac{\ell}{16} \rceil} < \frac{\delta}{2}$, and $8m\lceil \frac{\ell}{16} \rceil\ell(2m-1)^{d\ell} < (2m-1)^{\frac{\lceil \frac{\ell}{16} \rceil}{16m}-2}$. The last assumption is possible since $d < d_{m} < \frac{1}{256m}$. Consider the random presentation $\langle A | \mathcal{R}_{\ell} \rangle$ for the random group $G_{\ell}$. Let us first assume that the symmetrization $\langle A | \mathcal{R}_{\ell}^{*}\rangle$ is a $\frac{1}{16}$-translation-apparent presentation.
  
  Recall from Lemma \ref{lem: generatinginequality} the definition
  $$(\mathcal{R}_{\ell}^{*})_{\frac{1}{16}, s} = \{u |  u \text{ is a subword of } r \in \mathcal{R}_{\ell}^{*} \text{ ending with } s, |u| = \lceil \frac{1}{16}|r| \rceil\}.$$ Further recall the notation from Proposition \ref{prop: integerreducedinequality} and Proposition \ref{prop: normalizedreducedinequality}, where we set $j = \max_{s \in S} |(\mathcal{R}_{\ell})_{\frac{1}{16}, s}|$, and take $l$ to be the shortest unweighted length of a word in $\bigcup_{s \in S} (\mathcal{R}_{\ell}^{*})_{\frac{1}{16}, s}$.

  It is not difficult to see that $j \leq |\mathcal{R}_{\ell}^{*}| \leq \ell(2m-1)^{d\ell}$, and that $l = \lceil \frac{\ell}{16} \rceil$. Now, by our choice of $\ell$, we have $l > 32m$ and  
  $$8mjl \leq 8ml\ell(2m-1)^{d\ell} < (2m-1)^{\frac{l}{16m}-2}.$$ Therefore, by Proposition \ref{prop: normalizedreducedinequality}, we have 
  $$h(F_{m}, w) - \eps < h(F_{m}, w) - \frac{\delta}{2} < h(F_{m}, w) - \frac{2}{l} \leq h(G, w) \leq h(F_{m}, w).$$
  
  Since $\langle A | \mathcal{R}_{\ell}^{*}\rangle$ is a $\frac{1}{16}$-translation-apparent presentation, by Theorem \ref{thm: mainthm1} $(G_{\ell}, S)$ admits a unique  normalized minimizing weight $w_{0}$. Suppose, for a contradiction, that $\norm{w_{0} - w_{*}} \geq \eps$. Then by Lemma \ref{lem: freestability} we have $h(F_{m}, w_{0}) \geq h(F_{m}, w_{*}) + \delta$. But by the inequalities shown above for arbitrary $w$, we also have 
$$h(G, w_{*}) \leq h(F_{m}, w_{*}) \leq h(F_{m}, w_{0}) - \delta < h(G, w_{0}) -\frac{\delta}{2} < h(G, w_{0}),$$ contradicting that $w_{0}$ is a minimizer. Therefore, we have $\norm{w_{0} - w_{*}} < \eps$. 

Finally, note that the previous argument shows that $h(F_{m}, w_{0}) < h(F_{m}, w_{*}) + \delta$, which implies $|h(F_{m}, w_{0}) - h(F_{m}, w_{*})| < \delta$ since $w_{*}$ is a minimizer for $F_{m}$. It follows from the inequalities above that 
$$|h(G, w_{0}) - h(F_{m}, w_{*})| \leq |h(G, w_{0}) - h(F_{m}, w_{0})| + |h(F_{m}, w_{0}) - h(F_{m}, w_{*})| < \frac{\delta}{2} + \delta < \eps.$$ This proves that the minimum entropy of $G_{\ell}$ is within $\eps$ of the minimum entropy of $F_{m}$.

All of the inequalities above hold as long as $\ell$ is sufficiently large and the symmetrization $\langle A | \mathcal{R}_{\ell}^{*}\rangle$ is a $\frac{1}{16}$-translation-apparent presentation. As shown in the proof of Theorem \ref{thm: mainthm2}, this assumption that the symmetrization $\langle A | \mathcal{R}_{\ell}^{*}\rangle$ is a $\frac{1}{16}$-translation-apparent presentation is generically satisfied at density $d < d_{m}$, and so the inequalities above are generically satisfied as well.
\end{proof}

\begin{rmk} For completeness, let us observe that the value $d_{m}$ we have obtained is explicitly given by 
$$d_{m} = \frac{C_{m}}{16\ln(2m-1)} = \frac{(m-1)^{2}}{6144m^{2}(2m-1)^{2}\ln(2m-1)},$$ which goes to $0$ as $m \to \infty$. Note that even for $m = 2$ we have $d_{2} \approx \frac{1}{242995}$, which is much smaller than the usual density bounds which appear in the literature; this is the reason we refer to our situation as a ``low density" case. We have not tried very hard to optimize this value of $d_{m}$; indeed, our choice of $\lambda = \frac{1}{16}$ is arbitrary, and a better choice could improve $d_{m}$. However, we do not anticipate that such changes would result in $d_{m}$ comparable to the typical density scales.
\end{rmk}

\section{Stability}\label{sec: stability}
Once we have rigidity, we can consider the notion of \textit{stability}: if a sequence of objects have volume entropy converging to the minimum entropy, do these objects converge (in some sense) to the unique minimizer? In the context of volume entropy, Song \cite{songentropy} has shown that stability holds for the volume entropy of hyperbolic manifolds, with respect to a slightly modified version of measured Gromov-Hausdorff convergence.

We will show that as long as the space of normalized weights on a marked torsion-free, non-elementary hyperbolic group $(G, S)$ admits a unique minimizing weight, then we have stability with respect to the Euclidean metric on $\mathcal{W}$ (viewed naturally as a subset of $\RR^{|S|}$). 

\begin{prop}\label{prop: stability}
Let $(G, S)$ be a marked torsion-free, non-elementary group, with the property that the space $\mathcal{W}$ of normalized weights on $S$ contains a unique weight $w_{0}$ minimizing the volume entropy. Let $(w_{i})_{i \geq 1}$ be a sequence of weights in $\mathcal{W}$ such that 
$$h(w_{i}) \to h(w_{0}).$$ Then we have $w_{i} \to w_{0}$ in the Euclidean metric on $\mathcal{W}$.
\end{prop}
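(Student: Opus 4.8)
The plan is to argue by contradiction via a compactness argument, whose two essential inputs are the blow-up of the entropy near the boundary of the weight simplex, established in Proposition \ref{prop: compactness}, and the continuity of $h$ together with the assumed uniqueness of the minimizer. The underlying principle is the standard fact that a continuous function possessing a unique minimizer is automatically stable, provided one knows that sequences whose entropy approaches the minimum cannot escape to the boundary of $\mathcal{W}$ (where some coordinate $w(s)$ degenerates to $0$).

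Concretely, suppose for contradiction that $w_{i} \not\to w_{0}$. Then, after passing to a subsequence, there is some $\eps > 0$ with $\norm{w_{i} - w_{0}} \geq \eps$ for all $i$. Since $h(w_{i}) \to h(w_{0}) < \infty$, the values $h(w_{i})$ are bounded above by some $N$. By Proposition \ref{prop: compactness} there is $\delta > 0$ such that $w_{\min} < \delta$ forces $h(w) > N$; hence, discarding finitely many terms, every $w_{i}$ lies in the set $\mathcal{W}_{\geq \delta} = \{w \in \mathcal{W} \colon w_{\min} \geq \delta\}$. As observed in the proof of Proposition \ref{prop: existence}, this set is compact.

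By compactness I would then extract a further subsequence converging to some $w_{\infty} \in \mathcal{W}_{\geq \delta} \subseteq \mathcal{W}$. Continuity of $h$ on $\mathcal{W}$ gives $h(w_{\infty}) = \lim_{i} h(w_{i}) = h(w_{0})$, so $w_{\infty}$ is itself a minimizing normalized weight. By the uniqueness hypothesis, $w_{\infty} = w_{0}$. On the other hand, the inequality $\norm{w_{i} - w_{0}} \geq \eps$ passes to the limit to yield $\norm{w_{\infty} - w_{0}} \geq \eps > 0$, contradicting $w_{\infty} = w_{0}$.

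The only step carrying genuine content is the precompactness of the tail of the sequence, and this is exactly what Proposition \ref{prop: compactness} provides: without the entropy blow-up near the boundary, a near-minimizing sequence could in principle run off to a degenerate weight, and no limit point in $\mathcal{W}$ would be available. Once precompactness is secured, the remainder is a routine continuity-plus-uniqueness argument, so I would not expect any serious obstacle beyond correctly invoking Proposition \ref{prop: compactness} to confine the sequence to a compact subset of $\mathcal{W}$.
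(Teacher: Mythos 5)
Your proposal is correct and follows essentially the same route as the paper: both use Proposition \ref{prop: compactness} to confine a near-minimizing tail to the compact set $\mathcal{W}_{\geq \delta}$, then extract a convergent subsequence and apply continuity of $h$ plus uniqueness of the minimizer. The only cosmetic difference is that you phrase it as a contradiction with a subsequence bounded away from $w_{0}$, while the paper uses the ``every subsequence has a further subsequence converging to $w_{0}$'' formulation.
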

\begin{proof}
 Consider an arbitrary subsequence $(w_{i_{j}})_{j \geq 1}$ of $(w_{i})_{i \geq 1}$. Now, for each $s \in S$, consider the sequence of real numbers $(w_{i_{j}}(s))_{j \geq 1}$. Since $h(w_{i_{j}}) \to h(w_{0}) < \infty$, by Proposition \ref{prop: compactness} there exists $\delta > 0$ so that for sufficiently large $j$, $w_{i_{j}}(s) > \delta$.

So, $w_{i_{j}}(s)$ is contained in the compact set $[\delta, 1]$ for all sufficiently large $j$, and thus $(w_{i_{j}}(s))_{j \geq 1}$ must contain a convergent subsequence. Since this is true for each $s \in S$, we may pass to a subsequence finitely many times to obtain a subsequence $(w_{i_{j_{k}}})_{k \geq 1}$ of $(w_{i_{j}})_{j \geq 1}$ so that $(w_{i_{j_{k}}}(s))_{k \geq 1}$ converges for each $s$. Define $w_{\infty} \colon S \to (0, \infty)$ by 
$$w_{\infty}(s) = \lim_{k \to \infty} w_{i_{j_{k}}}(s) \geq \delta$$ for each $s$. Since each $w_{i_{j_{k}}}$ is normalized, we see that 
$$\sum_{s \in S} w_{\infty}(s) = \lim_{k \to \infty} \sum_{s \in S}w_{i_{j_{k}}}(s) = 1,$$ hence $w_{\infty} \in \mathcal{W}$ as well. 

By continuity of $h$, we have 
$$h(w_{\infty}) = \lim_{k \to \infty} h(w_{i_{j_{k}}}) = \lim_{i \to \infty} h(w_{i}) = h(w_{0}),$$ and by uniqueness of the minimizer we have $w_{\infty} = w_{0}$. So, every subsequence of $(w_{i})_{i \geq 1}$ contains a further subsequence converging to $w_{0}$ in the Euclidean metric on $\mathcal{W}$, hence $(w_{i})_{i \geq 1}$ must itself converge to $w_{0}$. 
\end{proof}

\section{Questions and Future Work}\label{sec: futurework}
To conclude, we mention some questions representing potential directions for future work. The first is a natural question to ask, since (somewhat embarrassingly) we do not have any counterexamples at hand:
\begin{quest}\label{quest: generalhyperbolic}
    Does every marked torsion-free, hyperbolic group $(G, S)$ have a unique minimizing normalized weight?
\end{quest}
Let us note that the answer is ``no" if we instead ask about strict convexity of the volume entropy. 
\begin{exmp}\label{exmp: nostrictconvexity}
    Consider $A = \{a, b, a^{2}, b^{2}\} \subset F_{2}$, and take the symmetric generating set $S = A \cup A^{-1}$ for $F_{2}$. Let $w_{0}$ be given by $w_{0}(a) = w_{0}(b) = \frac{1}{16},$ $w_{0}(a^{2}) = w_{0}(b^{2})  = \frac{3}{16}$, and let $w_{1}$ be given by $w_{1}(a) = w_{1}(b) = \frac{1}{16}$, $w_{1}(a^{2}) = \frac{1}{8}$, $w_{1}(b^{2}) = \frac{1}{4}$. 

    For $t = [0, 1]$, define $w_{t} = (1-t)w_{0} + tw_{1}$. Each $w_{t}$ is normalized, and moreover for each $t$ we have 
    \begin{align*}
    w_{t}(a^{2}) &\geq 2w_{t}(a),\\
    w_{t}(b^{2}) &\geq 2w_{t}(b).
    \end{align*}
    Therefore, when considering $d_{w_{t}}(e, v)$ for $v \in F_{2}$, there is never a need to use $a^{2}$ or $b^{2}$ instead of two $a$'s or two $b$'s. It follows that for all $t$ we have $h(w_{t}) = h(F, \{a^{\pm}, b^{\pm}\}, w)$, where $w(a) = w(b) = \frac{1}{16}$. Therefore $h$ is not strictly convex. 
\end{exmp}

We also present a potential source of counterexamples. Let $G$ be a hyperbolic group and $S$ a symmetric generating set. Fix an order on the elements of $S$, which extends to a lexicographical order on the set of words in $S$. Each $x \in G$ may be represented by multiple words in $S$, so we select the unique lexicographically first representative for 
each $x$.

Using the theory of cone types, Cannon showed in \cite{cannon} that there exists a directed graph $\Gamma$ so that there is a bijection between the directed paths in $\Gamma$ starting at a fixed basepoint and lexicographically first representatives for elements in $G$. Moreover, this bijection sends directed paths of length $n$ to words with word length $n$. 

Tracing through the proof, we see that for a fixed weight $w$ on $(G, S)$, there is a directed graph $\Gamma_{w}$ with weighted edges, so that there is once again a bijection between directed paths and lexicographically first representatives which additionally preserves weighted length. Therefore, $h(w)$ is equal to the exponential growth rate of the number of directed paths in $\Gamma_{w}$.

Now, as observed by McMullen \cite{mcmullen}, there may not be a unique metric (coming from edge weights) on a directed graph minimizing the growth rate of directed paths; in general there is only a unique optimal cohomology class of metric. Therefore a more careful study of these weighted directed graphs could lead to counterexamples.

Instead of considering all torsion-free, non-elementary hyperbolic groups, we can once again turn to random groups and ask,
\begin{quest}\label{quest: generaldensity}
    Which densities $0 \leq d \leq 1$ have the property that a generic random group in the density $d$ model admits a unique minimizing weight?
\end{quest}
Immediately, let us note that there is no need to consider $d > \frac{1}{2}$: Gromov showed in \cite{gromovrandom} that a generic group in this density regime is either trivial or $\ZZ/2\ZZ$ (due to parity reasons). For $d < \frac{1}{2}$, however, a generic group in the density model is torsion-free and non-elementary hyperbolic, and so this question is non-trivial. It is clear that conditions like translation-apparence fail generically for larger densities under $\frac{1}{2}$, so our approach cannot generalize. A potential idea to consider could be Ollivier's method of decorated Van Kampen diagrams, introduced in \cite{olliviersharp}. Perhaps a theory of weighted decorated Van Kampen diagrams could be of use here.

Another direction of further inquiry would be to allow the generating set to vary, and consider minimizers over all generating sets. \begin{quest}\label{quest: variedgeneratingsets}
    Consider all generating sets $S$ of a group $G$, and for each $S$ consider all normalized weights on $S$. For which groups $G$ does there exist a unique pair $(S, w)$ minimizing $h(G, S, w)$ over all such pairs?
\end{quest}
Observe that for a torsion-free, non-elementary hyperbolic group $G$, the existence of a minimizing pair $(S, w)$ holds by the same arguments used in Proposition \ref{prop: existence}. For roughly related work in this general direction for the unweighted case, involving studying the entropy for varying generating sets, we highlight \cite{fujiwarasela} \footnote{We thank Stephen Cantrell for pointing this reference out to us.} and \cite{wilson}. A generalization of these results to the weighted case could be interesting as well.

Finally, we end with an open ended question:
\begin{quest}\label{quest: geometricapplications}
    Can we apply the methods of this paper to other settings?
\end{quest}
One difficulty is that whereas a convex combination of normalized weights remains normalized, taking convex combinations of other objects may not preserve the properties we want. For example, in the classic setting of Besson-Courtois-Gallot \cite{BessonCourtoisGallot}, we are considering all Riemannian metrics $g$ on a closed hyperbolic manifold $(M, g_{0})$ satisfying $\Vol(M, g) = \Vol(M, g_{0})$, a condition which is not preserved by taking convex combinations of metrics.

\appendix

\section{Counting \texorpdfstring{$\lambda$-reduced words}{}}\label{sec: appendix}

In this appendix we prove the lemma we stated without proof in Section \ref{sec: translationapparent}. The proof is a direct application of the counting method for weighted subword avoidance developed in \cite{myers}. The following definitions are all given in \cite{myers}, although we change some notation for simplicity.

Let $\Omega$ be an alphabet equipped with an integer-valued weight $w$ assigning a weight to each letter, and let $\mathcal{F}$ be a set of forbidden words with letters in $\Omega$. Let us further assume that $\mathcal{F}$ is \textit{reduced}: no word in $\mathcal{F}$ contains another word in $\mathcal{F}$ as a subword. For each $n$, let $f(n)$ denote the number of words of total $w$-weight $n$ which do not contain any element of $\mathcal{F}$. Similarly, for each word $W \in \mathcal{F}$, let $f_{W}(n)$ denote the number of words of total $w$-weight $n$ which end with $W$, and do not contain any other forbidden words as a subword. We are interested in the generating functions $F(z) = \sum_{n \geq 0} f(n)z^{-n}$ and $F_{W}(z) = \sum_{n \geq 0} f_{W}(n)z^{-n}$. Here, we take $f(0) = 1$ (counting the empty word) and $f_{W}(0) = 0$.

Given $W_{1} = x_{1}\cdots x_{r}, W_{2} = y_{1} \cdots y_{t} \in \mathcal{F}$, the \textit{weighted correlation} $w(W_{1}, W_{2})$ is defined by 
$$\{j |  \exists i, x_{i} = y_{1}, x_{i+1} = y_{2}, \ldots, x_{r} = y_{r-i+1}, j = w(y_{1}) + \cdots + w(y_{r-i+1})\}.$$ In other words, $j$ is in $w(W_{1}, W_{2})$ iff there is a word $v$ of total weight $j$ such that $W_{1}$ ends with $v$ and $W_{2}$ begins with $v$. Note that this definition is, in general, not symmetric in $W_{1}, W_{2}$. The \textit{weighted correlation polynomial} $w(W_{1}, W_{2})_{z}$ is given by $\sum_{j \in w(W_{1}, W_{2})} z^{j}$, where we take $w(W_{1}, W_{2})_{z} = 0$ if $w(W_{1}, W_{2})$ is empty.

The main result of Myers is the following system of equations for the generating functions defined above. 
\begin{thm}[{\cite[Theorem 2]{myers}}]\label{thm: myers}
    Let $\mathcal{F} = \{W_{1}, \ldots, W_{k}\}$ be a reduced set of forbidden words over the alphabet $\Omega$ with weight $w$. For ease of notation, we use the abbreviations $w_{ij}(z) \coloneqq w(W_{i}, W_{j})_{z}$, $f_{i}(n) \coloneqq f_{W_{i}}(n)$, and $F_{i}(z) \coloneqq F_{W_{i}}(z)$.
Then the generating functions $F(z), F_{1}(z), \ldots, F_{k}(z)$ satisfy the following system of equations:
\begin{center}
\begin{tabular}{ c c c c c c}
 $\left(1 - \sum_{s \in \Omega}z^{-w(s)} \right)F(z)$ & $+F_{1}(z)$ & $+F_{2}(z)$ & $\cdots$ & $+F_{k}(z)$  & $=1$\\
 $F(z)$ & $-w_{11}(z)F_{1}(z)$ & $-w_{21}(z)F_{2}(z)$ & $\cdots$ & $-w_{k1}(z)F_{k}(z)$  & $= 0$\\
 $F(z)$ & $-w_{12}(z)F_{1}(z)$ & $-w_{22}(z)F_{2}(z)$ & $\cdots$ & $-w_{k2}(z)F_{k}(z)$  & $= 0$\\
 $\vdots$ & $\vdots$ & $\vdots$ & $\vdots$ & $\vdots$ &$\vdots$\\
 $F(z)$ & $-w_{1k}(z)F_{1}(z)$ & $-w_{2k}(z)F_{2}(z)$ & $\cdots$ & $-w_{kk}(z)F_{k}(z)$  & $= 0$\\
\end{tabular}
\end{center}
\end{thm}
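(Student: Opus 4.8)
The plan is to recognize Theorem~\ref{thm: myers} as a weighted incarnation of the Guibas--Odlyzko correlation-polynomial method and to prove each of the $k+1$ rows by a weight-preserving bijection, which then translates directly into the stated generating-function identities. Write $\mathcal{G}$ for the set of \emph{good} words (those containing no element of $\mathcal{F}$), so that $F(z)=\sum_{u\in\mathcal{G}}z^{-|u|_{w}}$, and for each $i$ write $\mathcal{G}_{i}$ for the set of words whose unique forbidden occurrence is a single copy of $W_{i}$ appearing as a suffix, so that $F_{i}(z)=\sum_{u\in\mathcal{G}_{i}}z^{-|u|_{w}}$. Throughout, the hypothesis that $\mathcal{F}$ is reduced will be used in the form: no two distinct forbidden words can both be suffixes of the same word (otherwise the shorter would be a subword of the longer).

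For the first row I would argue by appending a single letter. Given $u\in\mathcal{G}$ and $s\in\Omega$, the word $us$ either remains good or acquires a forbidden occurrence; since $u$ was good, any such occurrence must end at the final letter $s$, and by reducedness there is exactly one, so $us$ ends in a unique $W_{i}$ with no other forbidden factor, i.e.\ $us\in\mathcal{G}_{i}$. Conversely every nonempty good word and every word in $\bigcup_{i}\mathcal{G}_{i}$ arises this way from a unique pair $(u,s)$. Recording weights gives
$$\Big(\sum_{s\in\Omega}z^{-w(s)}\Big)F(z)=\big(F(z)-1\big)+\sum_{i=1}^{k}F_{i}(z),$$
where the $-1$ accounts for the empty word; rearranging yields the top equation.

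For the $j$-th row ($1\le j\le k$) the idea is to append the whole forbidden word $W_{j}$. Given $g\in\mathcal{G}$, form $gW_{j}$ and let $W_{i}$ be its leftmost-ending forbidden occurrence (one exists, since $W_{j}$ sits at the end). Because $g$ is good, this occurrence ends inside the appended copy of $W_{j}$, so a suffix of $W_{i}$ coincides with a prefix of $W_{j}$; letting $q$ be the weight of this common block we have $q\in w(W_{i},W_{j})$. Truncating $gW_{j}$ at the end of this occurrence produces a word $v$ ending in $W_{i}$ with no earlier forbidden factor, hence $v\in\mathcal{G}_{i}$, of weight $|g|_{w}+q$. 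The inverse map takes a triple $(i,q,v)$ with $v\in\mathcal{G}_{i}$ and $q\in w(W_{i},W_{j})$, appends to $v$ the tail of $W_{j}$ beyond its weight-$q$ prefix, and returns the prefix of the result of weight $|v|_{w}-q$; reducedness guarantees this prefix is good and that $W_{i}$ is indeed the leftmost-ending occurrence, so the two maps are mutually inverse. Counting by weight gives $f(N)=\sum_{i}\sum_{q\in w(W_{i},W_{j})}f_{i}(N+q)$ for every $N$, which is precisely $F(z)=\sum_{i}w_{ij}(z)F_{i}(z)$, the $j$-th row.

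I expect the main obstacle to be the bijection underlying the $j$-th row: one must check that the leftmost-ending occurrence always overlaps $W_{j}$ in a block whose weight lies in the correlation set $w(W_{i},W_{j})$, that reducedness forces this occurrence to be unique at its endpoint, and that the inverse map recovers exactly a good prefix together with the correct first occurrence. A secondary point requiring care is the bookkeeping of exponents, since the correlation polynomials $w_{ij}(z)=\sum_{p\in w(W_{i},W_{j})}z^{p}$ carry positive powers of $z$ while $F$ and the $F_{i}$ are power series in $z^{-1}$; tracking these signs is what makes the weighted identity align with the unweighted Guibas--Odlyzko formulas.
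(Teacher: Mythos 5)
Your proposal is correct and follows essentially the same route as the paper's proof in Appendix~\ref{sec: appendix}: the first row via the append-one-letter bijection (with reducedness ensuring a unique terminal forbidden suffix), and the $j$-th row via appending $W_{j}$ to a good word and truncating at the leftmost-ending forbidden occurrence, whose overlap with $W_{j}$ realizes an element of the correlation set. The only difference is presentational — you phrase the argument as weight-preserving bijections while the paper works directly with the coefficient recurrences for $f(n)$ and $f_{i}(n)$ — and the points you flag as needing care (uniqueness of the occurrence at its endpoint, well-definedness of the overlap weight) are exactly the ones the paper's proof verifies.
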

\begin{rmk}
    There are some minor errors in Myers' original statement and proof, so we provide a short proof here for completeness.
\end{rmk}
\begin{proof}
   Since $\mathcal{F}$ is reduced, removing the last letter $s$ from a word counted in $f_{i}(n)$ gives a word counted in $f(n - w(s))$. Therefore, we have the equality 
   $$f(n)  + f_{1}(n) + \cdots + f_{k}(n) - \sum_{s \in \Omega} f(n - w(s))= 0$$ for all $n$, except when $n = 0$ the right hand side should be $1$. Here, we interpret $f(a) = f_{i}(a) =  0$ when $a < 0$. Since the left hand side is the coefficient of $z^{-n}$ in $\left(1 - \sum_{s \in \Omega}z^{-w(s)} \right)F(z) + F_{1}(z) + \cdots + F_{n}(z)$, the first desired equality follows.

   Now, for a fixed $W_{i}$, we claim that 
   $$f(n) = \sum_{j_{1} \in w(W_{1}, W_{i})} f_{1}(n+ j_{1}) + \cdots + \sum_{j_{k} \in w(W_{k}, W_{i})} f_{k}(n+ j_{k}).$$ 
   
First let $X = x_{1}\cdots x_{r}$ be a word counted by $f(n)$, so that $X$ has total weight $n$. Append $W_{i}$ to the end of $X$. Then $XW_{i}$ can be uniquely written as a concatenation $YZ$, where $Y$ ends with some $W_{\ell}$ (and contains no other forbidden words), and $Z$ is possibly empty. Note that since $X$ does not contain any forbidden words, we can further write $Y = XV$, where $W_{\ell}$ ends with the subword $V$. We see that $VZ = W_{i}$, and we conclude that $|V|_{w} \in w(W_{\ell}, W_{i}).$ Therefore, we see that $Y$ is counted in $\sum_{j_{\ell} \in w(W_{\ell}, W_{i})} f_{\ell}(n + j_{\ell})$. Note that if we begin with distinct $X, X'$, the $Y, Y'$ obtained through this process are also distinct. This is because if $Y = Y'$, then writing $Y = XV$, $Y' = X'V'$, we see that $V, V'$ have the same total weight and that $W_{\ell}$ ends with both $V$ and $V$'. Therefore $V = V'$, and so $X = X'$. This shows that each word counted on the left hand side is counted uniquely on the right hand side. 

For the other direction, suppose that $Y$ is counted in $f_{\ell}(n + j_{\ell})$ for some $\ell$ and $j_{\ell} \in w(W_{\ell}, W_{i})$. It is clear that this can be true for only one $\ell$ and one $j_{\ell} \in w(W_{\ell}, W_{i})$. By definition, $Y$ ends with $W_{\ell}$, and there exists some $V$ with $|V|_{w} = j_{\ell}$ such that $W_{\ell}$ ends with $V$, and $W_{i}$ begins with $V$. Now, we can write $Y = XV$, where $X$ does not contain any forbidden subwords, so that $X$ is counted in $f(n)$. As above, distinct $Y, Y'$ correspond to distinct $X, X'$. This shows that each word counted on the right hand side is counted uniquely on the left hand side, proving the claimed equality.

The left hand side above is the coefficient of $z^{-n}$ in $F(z)$, and the right hand side is the coefficient of $z^{-n}$ in $\sum_{\ell=1}^{n} w_{\ell i}(z)F_{W_{\ell}}(z)$. This proves the remaining inequalities. 
\end{proof}

As Myers notes in \cite{myers}, the fact that $\mathcal{F}$ is reduced implies that the matrix of coefficients of the above system is invertible, and so we conclude that $F(z), F_{1}(z), \ldots, F_{k}(z)$ are all rational functions of $z$.

Now we apply this result to proving the lemma in Section \ref{sec: translationapparent}. Let $A = \{a_{1}, \ldots, a_{m}\}$, and let $G = \langle A | \mathcal{R}\rangle$ be a $\lambda$-translation-apparent presentation. We are interested in counting $\lambda$-reduced words, which are the reduced words $x$ over $A$ so that any common subword $u$ of $x$ and $r \in \mathcal{R}$ satisfies $|u| < \lceil \lambda |r|\rceil$. 

In the above framework, we take $\Omega = A \cup A^{-1}$. Since $\lambda$-reduced words are reduced, each word of the form $A_{i}^{+} = a_{i}a_{i}^{-1}$ and $A_{i}^{-} = a_{i}^{-1}a_{i}$ must be in our set of forbidden words $\mathcal{F}$. Moreover, for each $r \in \mathcal{R}$, any subword $u$ of $r$ satisfying $|u| = \lceil \lambda |r| \rceil$ must be in $\mathcal{F}$. Since $\langle A | \mathcal{R}\rangle$ is $\lambda$-translation-apparent, $\mathcal{R}$ is symmetrized and consists of cyclically reduced words, hence to each $r$ we can associate a unique forbidden word $U_{r}$ by taking the first $\lceil \lambda |r| \rceil$ letters of $r$ (all other such subwords of $r$ are included via taking the first letters of some cyclic permutation of $r$). The subwords described above are the only ones that must be avoided, so we obtain
$$\mathcal{F} = \{A_{1}^{\pm}, \ldots, A_{m}^{\pm}, U_{r} |  r \in \mathcal{R}\}.$$ Note that this is reduced: no $U_{r}$ contains any $A_{i}^{\pm}$ since $\mathcal{R}$ consists of reduced words, and no $U_{r}$ contains another $U_{r'}$ since $\langle A | \mathcal{R}\rangle$ satisfies the $C'(\lambda)$ small cancellation condition.

Now let $w$ be an integer valued weight on $\Omega$. For each $i$, let $\mathcal{U}_{i^{+}}$ denote the set of forbidden words of the form $U_{r}$ which end with $a_{i}$, and similarly define $\mathcal{U}_{i^{-}}$. For simplicity, write $w_{i} = w(a_{i}^{\pm})$. Now, calculating the weighted correlation polynomials, we see that $w(A_{i}^{+}, A_{i}^{+})_{z} =w(A_{i}^{-}, A_{i}^{-})_{z} = z^{2w_{i}}$, $w(A_{i}^{+}, A_{i}^{-})_{z} = w(A_{i}^{-}, A_{i}^{+})_{z} = z^{w_{i}}$, and $w(A_{i}^{\pm}, A_{j}^{\pm})_{z} = 0$ for $i \neq j$. Moreover, we have 
$w(U_{r}, A_{i}^{+})_{z} = z^{w_{i}}$ if $U_{r} \in \mathcal{U}_{i^{+}}$, and $w(U_{r}, A_{i}^{+})_{z} = 0$ otherwise (similarly with $i^{-}$).

Applying Theorem \ref{thm: myers}, we obtain 
\begin{equation}
    \left(1-\sum_{i=1}^{m}2z^{-w_{i}}\right)F(z) + \sum_{i=1}^{m}F_{A_{i}^{+}}(z) + \sum_{i=1}^{m}F_{A_{i}^{-}}(z) + \sum_{r\in  \mathcal{R}}F_{U_{r}}(z) = 1,
\end{equation}
and for each $i$ we have 
\begin{equation}
    F(z) - z^{2w_{i}}F_{A_{i}^{+}}(z) - z^{w_{i}}F_{A_{i}^{-}}(z) - z^{w_{i}}\sum_{U_{r} \in 
 \mathcal{U}_{i^{+}}}F_{U_{r}}(z)= 0,
\end{equation}
\begin{equation}
  F(z) - z^{2w_{i}}F_{A_{i}^{-}}(z) - z^{w_{i}}F_{A_{i}^{+}}(z) - z^{w_{i}}\sum_{U_{r} \in \mathcal{U}_{i^{-}}}F_{U_{r}}(z)= 0.
\end{equation}
Note that there are additional equations with coefficients of the form $w(A_{i}^{\pm}, U_{r})_{z}, w(U_{r}, U_{r'})_{z}$, but we will not need these.

Summing (A.4)and (A.5) and rearranging, we have 
\begin{equation}
    F_{A_{i}^{+}}(z) + F_{A_{i}^{-}}(z) = \frac{2}{z^{w_{i}}(z^{w_{i}}+1)}F(z) - \frac{1}{z^{w_{i}}+1}\sum_{U_{r} \in \mathcal{U}_{i^{\pm}}}F_{U_{r}}(z)
\end{equation} for each $i$, where $\mathcal{U}_{i^{\pm}} = \mathcal{U}_{i^{+}} \cup \mathcal{U}_{i^{-}}$. Summing (A.6) across all $i$, we obtain 
\begin{equation}
    \sum_{i=1}^{m}F_{A_{i}^{+}}(z) + \sum_{i=1}^{m}F_{A_{i}^{-}}(z) = \left(\sum_{i=1}^{m}\frac{2}{z^{w_{i}}(z^{w_{i}}+1)}\right)F(z) - \sum_{i=1}^{m}\frac{1}{z^{w_{i}}+1}\sum_{U_{r} \in \mathcal{U}_{i^{\pm}}}F_{U_{r}}(z).
\end{equation}

Substituting back into (A.3), we have
\begin{equation}
    \left( 1 - \sum_{i=1}^{m}2(z^{w_{i}}+1)^{-1}\right)F(z) - \sum_{i=1}^{m}\frac{1}{z^{w_{i}}+1}\sum_{U_{r} \in \mathcal{U}_{i^{\pm}}}F_{U_{r}}(z) +  \sum_{r \in \mathcal{R}}F_{U_{r}}(z) = 1.
\end{equation}
Finally, using the fact that each $U_{r}$ is contained in $\mathcal{U}_{i^{\pm}}$ for exactly one value of $i$, we have $\sum_{r \in \mathcal{R}}F_{U_{r}}(z) = \sum_{i=1}^{m}\sum_{U_{r} \in \mathcal{U}_{i^{\pm}}}F_{U_{r}}(z)$, and so
\begin{equation}
    \left( 1 - \sum_{i=1}^{m}2(z^{w_{i}}+1)^{-1}\right)F(z) + \sum_{i=1}^{m}\frac{z^{w_{i}}}{z^{w_{i}}+1}\sum_{U_{r} \in \mathcal{U}_{i^{\pm}}}F_{U_{r}}(z) = 1.
\end{equation}

Now we estimate $F_{U_{r}}(z)$. Each word counted by $f_{U_{r}}(n)$ gives a word counted by $f(n - |U_{r}|_{w})$ after removing the copy of $U_{r}$ at the end, and so we have $f_{U_{r}}(n) \leq f(n - |U_{r}|_{w})$ for each $n$. Therefore, we have 
$$F_{U_{r}}(z) \leq z^{-|U_{r}|_{w}}F(z)$$ for any positive, real $z$ within the domain of convergence of $F(z)$. (Note that such a $z$ must lie in the domain of convergence of $F_{U_{r}}(z)$ as well, by the same inequality.) It follows from (A.9) that 
\begin{equation}
    \left( 1 - \sum_{i=1}^{m}2(z^{w_{i}}+1)^{-1}\right)F(z) + \left(\sum_{i=1}^{m}\frac{z^{w_{i}}}{z^{w_{i}}+1}\sum_{U_{r} \in \mathcal{U}_{i^{\pm}}}\frac{1}{z^{|U_{r}|_{w}}}\right)F(z) \geq 1
\end{equation}
for any positive, real $z$ within the domain of convergence of $F(z)$.

Consider the substitution $x = \frac{1}{z}$, so that we have a power series $\sum_{n \geq 0} f(n)x^{n}$ with nonnegative exponents. Since this is a rational function, we know that the growth rate $M = \lim_{n \to \infty} \sqrt[n]{f(n)}$ is equal to $\frac{1}{R}$, where $R$ is the radius of convergence of $\sum_{n \geq 0} f(n)x^{n}$ \cite[Theorem IV.7]{flajoletsedgewick}. Naturally, this implies that $F(z)$ converges for all $|z| > M$. We can now prove the lemma in Section \ref{sec: translationapparent}.

\begin{proof}[Proof of Lemma \ref{lem: generatinginequality}]
    In the notation of Lemma \ref{lem: generatinginequality}, $\mathcal{R}_{\lambda, a_{i}} = \mathcal{U}_{i^{+}}$, and similarly for $i^{-}$. Suppose, for a contradiction, that $p(z)$ has a real root $z = z_{0}$ with $z_{0} > M$. Then $z_{0}$ lies within the domain of convergence of $F(z)$, and by (A.10) we have 
    $$0 = p(z_{0})F(z_{0}) \geq 1,$$ contradiction.
\end{proof}

\bibliographystyle{amsplain}
\bibliography{main}

\end{document}